\newtheorem{theorem}{Theorem}
\newtheorem{proposition}{Proposition}
\newtheorem{lemma}{Lemma}
\newtheorem{definition}{Definition}
\newtheorem{assumption}{Assumption}
\theoremstyle{definition}
\newtheorem{remark}{Remark}
\newcommand{\bdm}{\begin{displaymath}}
\newcommand{\edm}{\end{displaymath}}
\newcommand{\bq}{\begin{equation}}
\newcommand{\eq}{\end{equation}}
\newcommand{\bqn}{\begin{equation*}}
\newcommand{\eqn}{\end{equation*}}
\newcommand{\kmu}{k^{-1}}
\newcommand{\rn}{\mathbb{R}^n}
\newcommand{\rnn}{\mathbb{R}^{2n}}
\newcommand{\eps}{\varepsilon}
\newcommand{\oh}{\frac{1}{2}}
\newcommand{\phr}{\psi_{wk}}
\newcommand{\norm}[1]{\left\| #1 \right\|}
\newcommand{\mklm}[1]{\left\{ #1 \right\}}
\newcommand{\eklm}[1]{\left\langle #1 \right\rangle}
\renewcommand{\d}{\,d}
\newcommand{\N}{{\mathbb N}}
\newcommand{\C}{{\mathbb C}}
\newcommand{\R}{{\mathbb R}}
\newcommand{\A}{{\mathcal A}}
\newcommand{\D}{{\mathcal D}}
\newcommand{\E}{{\mathcal E}}
\newcommand{\F}{{\mathcal F}}
\renewcommand{\H}{{\mathcal H}}
\newcommand{\M}{{\mathcal M}}
\newcommand{\T}{{\rm T}}
\newcommand{\Ocal}{{\mathcal O}}
\newcommand{\1}{{\bf 1}}
\renewcommand{\epsilon}{\varepsilon}
\renewcommand{\phi}{\varphi}
\renewcommand{\rho}{\varrho}
\newcommand{\Cinft}{{\rm C^{\infty}}}
\newcommand{\CT}{{\rm C^{\infty}_c}}
\renewcommand{\L}{{\rm L}}
\newcommand{\Ncal}{{\mathcal N}}
\renewcommand{\S}{{\mathcal S}}
\newcommand{\SO}{\mathrm{SO}}
\renewcommand{\O}{{\mathrm O}}
\newcommand{\g}{{\bf \mathfrak g}}
\renewcommand{\det}{\mathrm{det}\,}
\renewcommand{\Re}{\mathrm{Re}\,}
\newcommand{\vol}{\text{vol}\,}
\newcommand{\dist}{\text{dist}\,}
\newcommand{\Op}{\mathrm{Op}}
\DeclareMathOperator{\supp}{supp}
\DeclareMathOperator{\tr}{tr}
\DeclareMathOperator{\gd}{\partial}
\DeclareMathOperator{\Sing}{Sing}
\DeclareMathOperator{\Reg}{Reg}
\newcommand{\dbar}{{\,\raisebox{-.1ex}{\={}}\!\!\!\!d}}
\begin{document}

\author{Roch Cassanas and Pablo Ramacher}
\title[Reduced Weyl asymptotics for  PDO on bounded domains II]{Reduced Weyl asymptotics for pseudodifferential operators on bounded domains II \\ The compact  group case}
\address{Roch Cassanas and Pablo Ramacher, Georg-August-Universit\"at G\"ottingen, Institut f\"ur Mathematik, Bunsenstr. 3-5, 37073 G\"ottingen, Germany}
\subjclass{35P20, 47G30, 20C99}
\keywords{Pseudodifferential operators, asymptotic distribution of eigenvalues, compact group actions, Peter-Weyl decomposition, partial desingularization}
\email{ cassanas@uni-math.gwdg.de \\ramacher@uni-math.gwdg.de}
\thanks{This research was financed by the grant RA 1370/2-1 of the German Research Foundation (DFG)}

\begin{abstract} Let $G\subset \O(n)$ be a compact  group of isometries acting on $n$-dimensional Euclidean space $\R^n$, and ${\bf{X}}$ a bounded  domain in $\R^n$ which is transformed into itself under the action of $G$. Consider a  symmetric, classical pseudodifferential operator $A_0$ in $\L^2(\R^n)$ that commutes with the regular representation of $G$, and assume that it is elliptic on $\bf{X}$. We show that the spectrum of  the Friedrichs extension $A$ of the operator $\mathrm{res} \circ A_0 \circ \mathrm{ext}: \CT({\bf{X}}) \rightarrow \L^2({\bf{X}})$ is discrete, and  using the method of the stationary phase, we derive asymptotics for the number  $N_\chi(\lambda)$ of eigenvalues of $A$  equal or less than $\lambda$ and with eigenfunctions in the $\chi$-isotypic component of $\L^2({\bf{X}})$ as $\lambda \to \infty$, giving also an estimate for the remainder term for singular group actions. Since the considered critical set  is a singular variety, we recur to partial desingularization  in order to apply the stationary phase theorem.
\end{abstract}

\maketitle


\section{Introduction}

Let $G\subset \O(n)$ be a compact Lie group of isometries acting on Euclidean space $\R^n$, and ${\bf{X}}$ a bounded open set of $\R^n$  which is transformed into itself  under the action of $G$. Consider the regular representation of $G$
\begin{equation}
\label{eq:T}
\T(k) \phi(x)=\phi(k^{-1}x)
\end{equation}
 in the Hilbert spaces $\L^2(\R^n)$ and $\L^2({\bf{X}})$ of square-integrable functions by unitary operators. As a consequence of the Peter-Weyl theorem, $T$ decomposes into isotypic components according to
\bqn
\L^2(\R^n)=\bigoplus _{\chi \in \hat G} \H_\chi, \qquad \L^2({\bf{X}})= \bigoplus _{\chi \in \hat G} \mathrm{res} \, \H_\chi,
\eqn
where $\hat G$ denotes the set of irreducible characters of $G$,  and $\mathrm{res}:\L^2(\R^n) \rightarrow \L^2({\bf{X}})$ is the natural restriction operator. The spaces $\H_\chi$ are closed subspaces, and the corresponding orthogonal projection operators are given by
\begin{equation}\label{eq:P}
P_\chi={d_\chi} \int\limits_G \overline{ \chi(k)} \T(k) dk,
\end{equation}
where $d_\chi=\chi(\1)$ is the dimension of the irreducible representation belonging to the character $\chi$, and $dk$ denotes the normalized Haar measure on $G$. In what follows, we do not assume that the boundary $\gd {\bf{X}}$ of $\bf X$ is smooth, but only that there exists a constant $c>0$ such that for any sufficiently small  $\rho>0$, $\vol (\gd {\bf{X}})_\rho \leq c \rho$, where  $(\gd {\bf{X}})_\rho=\mklm{x \in \rn: \dist(x, \gd {\bf{X}})< \rho}$, and that $0 \not\in \gd {\bf{X}}$.

 Let now $A_0$ be a  symmetric, classical pseudodifferential operator in $\R^n$ of order $2m$  that commutes with the operators $\mathrm{T}(k)$ for all $k \in G$.  Let $a_{2m}$ be its principal symbol, and assume that there exists a constant $C_0>0$ such that
\bq\label{ellip1}
 a_{2m}(x,\xi)\geq C_0\, |\xi|^{2m},\quad  \forall x\in {\bf X}, \, \forall \xi \in \rn.
\eq
Let $\mathrm{ext}$ denote the natural extension operator by zero. Under condition  \eqref{ellip1}, the operator
\bqn
\mathrm{res} \circ A_0 \circ \mathrm{ext}: \CT({\bf{X}}) \longrightarrow \L^2({\bf{X}}),
\eqn
is symmetric and lower semi-bounded, and we denote its Friedrichs extension by $A$.  It can be shown that  $A$ has  compact resolvent, and if the boundary of $\bf X$ is sufficiently smooth, and $A_0$ satisfies the transmission property,  the domain of $A$ is given by
\begin{equation*}
D(A)=\{u\in H_0^m ({\bf X}) : A_0 u\in \L^2({\bf{X}})\},
\end{equation*}
where $H_0^m ({\bf X})$ is the closure of $\CT({\bf X})$ in the Sobolev space $H^m ({\bf X})$, so that  we are in presence of  a generalized Dirichlet problem. Since  $A$ leaves  each of the isotypic components $\mathrm{res} \, \H_\chi$ invariant, the restriction of $A$ to $\mathrm{res} \, \H_\chi$ gives rise to the so-called reduced operator $A_\chi$. Its domain is $D(A_\chi)=D(A)\cap\mathrm{res} \, \H_\chi $, and its spectrum is discrete,  the spectrum of $A$  being equal to the union of the spectra of the operators $A_\chi$. 

 The purpose of this paper is to investigate the spectral counting function $N_\chi(\lambda)$ of $A_\chi$, which is given by  the number of eigenvalues of $A_\chi$,  counting multiplicities,  that are less than $\lambda\in\R$.
  It corresponds to the number of eigenvalues of $A$ less than $\lambda$,   and with eigenfunctions in the $\chi$-isotypic component of $\L^2({\bf{X}})$, so that
  \bqn
N_\chi(\lambda) =d_\chi \sum_{t \leq \lambda} \mu_\chi(t),
\eqn
where  $\mu_\chi(t)$ denotes the multiplicity of the irreducible representation of dimension $d_\chi$ corresponding to the character $\chi$ in the eigenspace of $A$ with eigenvalue $t$. $N_\chi(\lambda)$ describes the distribution of eigenvalues of $A$, and we shall investigate its asymptotic behavior as $\lambda \to +\infty$  by means of the generalized theorem of the stationary phase. 
It will turn out that $N_\chi(\lambda)$ is intimately related to the representation theory of $G$, and  the geometry of the Hamiltonian action of $G$ on the symplectic manifold $T^\ast({\bf{X}})$. In fact, if $(A_1,\dots, A_d)$ is a basis of the Lie algebra $\g$ of $G$,  let
\bqn
\mathbb{J}:T^\ast({\bf{X}})\simeq {\bf X}\times \rn\to \g\simeq \R^d, \quad (x,\xi) \to (\eklm{A_1x,\xi},\dots,\eklm{A_dx,\xi}),
\eqn
 be the associated momentum map,  where $\eklm{\cdot,\cdot}$ stands  for the Euclidean scalar product in $\rn$, and denote by
 \bqn
 \Omega_0/G=\mathbb{J}^{-1}(\{ 0 \})/G
 \eqn
the symplectic quotient of $T^\ast({\bf{X}})$ at level zero.  This quotient is naturally related to the critical set of the phase function  in question, and plays a crucial role in our reduction. Indeed, we shall prove that   $N_\chi(\lambda)$ is asymptotically determined by  a certain volume of the quotient $\Omega_0/G$, which is  symplectically diffeomorphic to $T^*({\bf X}/G)$ on its smooth part \cite{emmrich-roemer}. Now, the major difficulty in applying the generalized stationary phase theorem in our setting  stems from the fact that, due to the singular orbit structure of the underlying group action,  the zero level $\Omega_0$ of the momentum map, and, consequently, the considered critical set, are in general singular varieties. In fact,  if the $G$-action on $T^\ast({\bf X)}$ is not free, the considered momentum map is no longer a submersion, so that $\Omega_0$ and $\Omega_0/G$ are not smooth anymore. Nevertheless, it can be shown that these spaces have a Whitney stratification into smooth submanifolds, see \cite{ortega-ratiu}, Theorems 8.3.1 and 8.3.2, which corresponds to the stratification of $T^\ast({\bf{X}})$, and $\rn$ into orbit types. To apply the principle of the stationary phase to our problem, we shall therefore proceed to partially resolve the singularities of $\Omega_0$, and then apply the stationary phase theorem in the resolution space under the sole assumption that  the set $\Sing \Omega_0$ of points where  $\Omega_0$ is not a manifold is contained in a strict vector subspace of $T^\ast({\bf{X}})$. This is always fulfilled for group actions that satisfy the following condition \footnote{Examples for such group actions are given in Remark 1.}: If $\rn_{(H_0)}$ denotes the union of all principal orbits in $\rn$ of type $G/H_0$, which is an open and dense subset in $\rn$, then $\rn \setminus\rn_{(H_0)}$ should be contained in a strict vector subspace of $\rn$. The main result  of this paper is Theorem \ref{thm:1}, which states that, as  $\lambda\to +\infty$, one has the asymptotic  formula
\begin{equation*}
N_\chi(\lambda)=\frac {d_\chi [\rho_{\chi|H_0}:1]}{(2\pi)^{n-\kappa}}  \vol([a_{2m}^{-1}((-\infty, 1])\cap \Omega_0]/G) \,    \lambda^{(n-\kappa)/2m} +O(\lambda^{(n-\kappa-1/4)/2m}),
\end{equation*}
where $d_\chi=\chi(\1)$,           $[\rho_{\chi|H_0}:1]$ is the multiplicity of the trivial representation in the restriction of $\rho_\chi$ to any principal isotropy group conjugated to $H_0$, and $\kappa$ the common dimension of the orbits of principal type. The volume  of the quotient $ [a_{2m}^{-1}((-\infty, 1])\cap \Omega_0]/G$ is defined in Section 5.

The asymptotic distribution of eigenvalues was first studied by Weyl \cite{weyl} for certain second order differential operators in $\R^n$ using variational techniques. H\"ormander \cite{hoermander68} then extended these results to elliptic pseudodifferential  operators on closed manifolds using the theory of Fourier integral operators.  The first ones to study reduced Weyl asymptotics for elliptic operators on closed Riemannian manifolds  in the presence of a compact group of isometries were Donnelly \cite{donnelly} together with  Br\"uning and Heintze \cite{bruening-heintze}. In the semi-classical context, reduced Weyl asymptotics and trace formulae were investigated in \cite{helffer-elhouakmi},  and in \cite{cassanas} via coherent states.
Our approach is based on the method of approximate spectral projections, first introduced by Tulovskii and Shubin \cite{tulovsky-shubin}.  Nevertheless, due to the presence of the boundary, the original method cannot be applied to our situation, and one has to use more elaborate techniques, which were subsequently developed by Feigin \cite{feigin} and Levendorskii \cite{levendorskii}. Compared to the method of Fourier integral operators, this approach gives weaker estimates for the remainder, but allows to consider non-smooth boundaries. Recently, Bronstein and Ivrii have obtained even sharp estimates for the remainder term in the case of differential operators on manifolds with boundaries satisfying the conditions specified above \cite{ivrii-bronstein03,ivrii03}.

This paper is the second part  of an investigation initiated in  \cite{ramacher07},  which we shall refer to in the following as Part I. There, the foundations of the calculus of approximate spectral projection operators were provided, and the case of a finite group of isometries was settled.  In this second part, the case of a compact group of isometries will be considered. Before we start, some comments on the  results obtained might be in place. Asymptotics for the spectral counting function $N_\chi(\lambda)$ were obtained in \cite{donnelly} and \cite{bruening-heintze} for general compact, isometric and effective Lie group actions using Heat kernel methods;  nevertheless, this approach  does not allow to derive estimates for the remainder term.
 Using Fourier integral operator techniques, the same authors obtained rather optimal remainder estimates for compact G-manifolds   in the cases where there is only one orbit type, or all orbits have the same dimension. For orthogonal actions in $\rn$, estimates for the remainder where obtained in \cite{helffer-robert86,helffer-elhouakmi} in  case that  the union $\rn_{(H_0)}$ of all principal orbits  is given by $\rn-\mklm{0}$.
In this paper, remainder estimates are obtained in the case that singular orbits are present by partially resolving the singularities of the zero level of the momentum map $\Omega_0$.

\section{Reduced spectral asymptotics and the approximate spectral projection operators}

In this section, we shall review some basic facts in  the theory of  pseudodifferential operators that will be needed in the sequel, and introduce the method of approximate spectral projection operators. For a more detailed exposition, the reader is referred to Part I, Sections 2 and 3. Let $A_0$ be a  classical  pseudodifferential operator of order $2m$ in $\R^n$, regarded as an operator in $\L^2(\R^n)$ with domain $\CT(\R^n)$. In other words, $A_0$ can be represented by an oscillatory integral of the form
 \bqn
 A_0 u (x) =\int \int e^{i(x-y) \xi} a(x,\xi) u(y) dy \dbar \xi,
 \eqn
where its symbol $a(x,\xi)$ has an asymptotic expansion of the form
 \bqn
 a(x,\xi)\sim\sum_{j\geq 0}a_{2m-j}(x, \xi)\, (1-\chi(\xi)),
 \eqn
$\chi$ being a  compactly supported function equal to $1$ in a neighborhood of zero, and the functions $a_{2m-j}$ are  homogeneous of degree $2m-j$ in variable $\xi$.  $a_{2m}$ is called the principal symbol of $A_0$. If $0\leq \rho,\delta\leq 1$, and $\bf Y$ is an open set in $\R^n$, let us denote by
$S^m_{\rho,\delta}({\bf Y}\times\rn)$ the set of smooth functions $\sigma(x,\xi)$ on ${\bf Y}\times \rn$ such that for all compact sets $K$ in ${\bf Y}$, and all multi-indices $\alpha,\beta$, there exist constants $C_{K,\alpha,\beta}>0$ such that
$$|\partial_\xi^\alpha\partial_x^\beta \sigma(x,\xi)|\leq C_{K,\alpha,\beta} \eklm{\xi}^{m-\rho |\alpha|+\delta|\beta|}.$$
Let $L^m_{\rho,\delta}({\bf Y})$ be the class of pseudodifferential operators with symbols in $S^m_{\rho,\delta}({\bf Y}\times\rn)$.
Then, as a local pseudodifferential operator, $A_0 \in \L^{2m}_{1,0}(\R^n)$, see \cite{shubin}, Section 3.7. In what follows, we shall also need certain global spaces of symbols and pseudodifferential operators, which also take decay properties in $x$ into account. They were introduced by H\"{o}rmander within the framework of Weyl calculus of pseudodifferential  operators. Thus,  consider  on $\R^{2n}$ the metric
  \bqn
  \tilde g_{x,\xi}(y,\eta)=(1+|x|^2+|\xi|^2)^\delta |y|^2 + (1+|x|^2 + |\xi|^2 )^{-\rho} |\eta|^2,
\eqn
 where $1 \geq \rho >\delta \geq 0$, and put $h(x,\xi) = (1+|x|^2 +|\xi|^2 )^{-1/2}$.
   \begin{definition} Let $p$ be a $\tilde g$-continuous function.
  The class $\Gamma_{\rho,\delta}(p, \R^{2n})$, $0\leq \delta < \rho \leq 1$, consists of all functions $u \in \Cinft(\R^{2n})$ which for all multiindices $\alpha,\beta$ satisfy the estimates
 \bqn
 |\gd ^\alpha _\xi \gd^\beta_x u(x,\xi)| \leq C_{\alpha \beta}\,  p(x,\xi) \, (1+|x|^2+|\xi|^2)^{(-\rho|\alpha|+\delta|\beta|)/2}.
  \eqn
    In particular, we shall write  $\Gamma^l_{\rho,\delta}(\R^{2n})$ for $\Gamma_{\rho,\delta}(h^{-l},\R^{2n})$, where  $l \in \R$.
  \end{definition}
The class $\Gamma_{\rho,\delta}(p,\R^{2n})$ is also denoted by $S(\tilde g,p)$,  see Part I, Definitions 1 and 3.  Let now $a \in \Gamma_{\rho,\delta}(p,\R^{2n})$,  $0 \leq 1-\rho \leq \delta< \rho \leq 1$, and $\tau \in \R$. Then
  \bqn
Au(x) = \int \int e^{i(x-y)\xi} a ((1-\tau)x+\tau y, \xi  )u(y) dy \, \dbar \xi
  \eqn
  defines a continuous operator in $\S(\R^n)$, respectively $\S'(\R^n)$, see Part I, Corollary 1.  In this case, $a$ is called the $\tau$-symbol of $A$, and the operator $A$ is denoted by  $\Op^\tau(a)$.   If $\tau=1/2$, $a$ is called they Weyl symbol of $A$, and one also writes $\Op^w(a)$ for $A$.  Pseudodifferential operators with real  Weyl symbols give rise to self-adjoint operators. For $\tau=0$ and $\tau=1$ one simply obtains the usual left and right symbols, respectively. 
Our symbol classes will be mainly of  the form   $S(h^{-2\delta}g, p)=\Gamma_{1-\delta,\delta}(p , \R^{2n})$ with
\bqn
 g_{x,\xi}(y,\eta)=|y|^2+ h(x,\xi)^2|\eta|^2,
\eqn
where $p$ is a smooth, positive, g-continuous function, and $0 \leq \delta < 1/2$.
In what follows,  $\Pi_{\rho,\delta}(p,\rn)$ and $\Pi_{\rho,\delta}^l(\rn)$ will denote the classes of pseudodifferential operators with symbols in $\Gamma_{\rho,\delta}(p,\rnn)$, and  $\Gamma_{\rho,\delta}^l(\rnn)$, respectively.  

 
 Consider now a  bounded domain ${\bf X}$  in $\R^n$ with not necessarily smooth boundary $\gd {\bf X}$, and let $a$ be the left symbol of the classical pseudodifferential operator $A_0$. Clearly, $a \in S(g,h^{-2m}, Z \times \R^n)$ for any compact set $Z\subset \R^n$. By changing $a$ outside ${\bf{X}} \times \R^n$, we can therefore assume that $a  \in S(g,h^{-2m})$, so that
\bqn
A_0\in \Pi_{1,0}^{2m}(\rn).
\eqn
Assume now that $A_0$ satisfies the ellipticity condition \eqref{ellip1}.
\begin{lemma}
The ellipticity condition \eqref{ellip1} is equivalent to the existence of constants  $C,M>0$ such that
\bq\label{ellip2}
((A_0+M\1 )u,u)_{L^2({\bf X})}\geq C\,\norm{u}_{H^m({\bf X})}^2,\qquad \forall u\in \CT({\bf X}).
\eq
where $\norm{.}_{H^m({\bf X})}$ is the norm in the Sobolev space $H^m({\bf X})$.
\end{lemma}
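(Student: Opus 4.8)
\emph{The statement is an equivalence}, so I would prove the two implications separately: \eqref{ellip1}$\Rightarrow$\eqref{ellip2} by a G\aa rding-type lower bound, and \eqref{ellip2}$\Rightarrow$\eqref{ellip1} by testing the quadratic form against a family of spatially concentrated, highly oscillatory functions.

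For the first implication I would begin by observing that, for $u\in\CT({\bf X})$, the value of $((A_0+M\1)u,u)_{\L^2({\bf X})}$ is unchanged if one modifies the left symbol $a(x,\xi)$ for $x$ outside ${\bf X}$, since $A_0u(x)$ enters only through $x\in\supp u\subseteq{\bf X}$. Because $a_{2m}$ is continuous and, by \eqref{ellip1}, bounded below by $C_0|\xi|^{2m}$ on $\overline{\bf X}$, the estimate $a_{2m}(x,\xi)\geq\frac{C_0}{2}|\xi|^{2m}$ already holds on a neighbourhood of $\overline{\bf X}$; multiplying $a$ by a cut-off equal to $1$ on ${\bf X}$ and supported in that neighbourhood, and adding $\frac{C_0}{2}\eklm{\xi}^{2m}$ times the complementary cut-off, one replaces $A_0$ by an operator in $\Pi^{2m}_{1,0}(\rn)$ whose principal symbol is $\geq\frac{C_0}{2}|\xi|^{2m}$ on all of $\rn$, without affecting $((A_0+M\1)u,u)_{\L^2({\bf X})}$ for $u\in\CT({\bf X})$. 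To this operator I would apply G\aa rding's inequality in the global symbol classes, obtaining constants $c,C'>0$ with $\Re(A_0u,u)_{\L^2}\geq c\,\norm{u}_{H^m(\rn)}^2-C'\,\norm{u}_{\L^2(\rn)}^2$ for all $u\in\CT(\rn)$; since $A_0$ is symmetric, the left-hand side is just $(A_0u,u)_{\L^2}$. Finally, for $u\in\CT({\bf X})$ one has $\norm{u}_{\L^2(\rn)}=\norm{u}_{\L^2({\bf X})}$, while $\norm{u}_{H^m(\rn)}$ and $\norm{u}_{H^m({\bf X})}$ are comparable uniformly in $u$, so \eqref{ellip2} follows with $C=c$ and $M=C'$.

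For the converse, I would fix $x_0\in{\bf X}$ and $\xi_0\in\rn$ with $|\xi_0|=1$; by homogeneity of $a_{2m}$ in $\xi$ it suffices to show $a_{2m}(x_0,\xi_0)\geq C$, where $C$ is the constant in \eqref{ellip2}. Pick $g\in\CT(\rn)$ with $\norm{g}_{\L^2(\rn)}=1$ and, for $\lambda>0$, set
\bqn
u_\lambda(x)=\lambda^{n/4}\,g\big(\sqrt\lambda\,(x-x_0)\big)\,\e{i\lambda\eklm{x,\xi_0}}.
\eqn
For $\lambda$ large, $\supp u_\lambda\subseteq{\bf X}$, so $u_\lambda\in\CT({\bf X})$, and $\norm{u_\lambda}_{\L^2(\rn)}=1$. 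Since $u_\lambda$ is microlocally concentrated at the phase-space point $(x_0,\lambda\xi_0)$, and the full symbol of $A_0$ obeys $a(x,\xi)=a_{2m}(x,\xi)+O(\eklm{\xi}^{2m-1})$ with $a_{2m}$ homogeneous of degree $2m$ in $\xi$, a standard stationary phase computation gives, as $\lambda\to\infty$,
\bqn
(A_0u_\lambda,u_\lambda)_{\L^2}=\lambda^{2m}\,a_{2m}(x_0,\xi_0)+O(\lambda^{2m-1/2}),\qquad \norm{u_\lambda}_{H^m(\rn)}^2=\lambda^{2m}\,(1+o(1)).
\eqn
Inserting these into \eqref{ellip2}, dividing by $\lambda^{2m}$ and letting $\lambda\to\infty$ yields $a_{2m}(x_0,\xi_0)\geq C$; as $x_0\in{\bf X}$ and the unit vector $\xi_0$ were arbitrary, \eqref{ellip1} follows with $C_0=C$.

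\emph{The substantial ingredient} is G\aa rding's inequality for the operator classes $\Pi^{2m}_{1,0}(\rn)$ used in the first implication; its proof rests on constructing an approximate square root of $A_0$ within the symbol calculus (roughly $\Op^w(\sqrt{a_{2m}})$) and controlling the remaining error, of order $2m-1$, by interpolation, and I would simply invoke it from the references and from Part I. The remaining points — the reduction to a global ellipticity-and-positivity statement, the equivalence of the $H^m$-norms on $\rn$ and on ${\bf X}$ for compactly supported test functions, and the stationary phase expansion of the quadratic form on the concentrating family $u_\lambda$ with the stated remainder — are standard but call for some care in the bookkeeping.
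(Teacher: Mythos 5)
Your proof is correct, but it takes a genuinely different route from the paper on both implications. For \eqref{ellip1}$\Rightarrow$\eqref{ellip2}, the paper also starts from the compactness observation that $a_{2m}\geq\frac{C_0}{2}|\xi|^{2m}$ on a neighbourhood ${\bf X}_\varepsilon$, but then restricts $A_0$ to ${\bf X}_\varepsilon$ as an operator in $L^{2m}_{1,0}({\bf X}_\varepsilon)$, decomposes it as $A_0=A_1+R$ with $A_1$ properly supported and $R$ a smoothing operator, applies the \emph{local} G\aa rding inequality to $A_1$ on ${\bf X}_\varepsilon$, and separately shows that $R$ is $\L^2({\bf X})$-bounded via the Schwartz inequality on its smooth kernel. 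You instead modify the left symbol globally in $x$ (a step the paper itself licenses when it says $a$ may be changed outside ${\bf X}\times\rn$ so that $a\in S(g,h^{-2m})$) to make it elliptic on all of $\rn$ and invoke a \emph{global} G\aa rding inequality in the classes $\Pi^{2m}_{1,0}(\rn)$; this sidesteps the properly-supported decomposition entirely. Both are fine; the paper's route stays within the classical local calculus while yours leans on the global Weyl-H\"ormander machinery that the paper sets up anyway. One small point worth keeping track of in your version: after modifying the left symbol outside ${\bf X}$ the new operator need not be symmetric, so one should argue via $\Re(\tilde A_0 u,u)_{\L^2(\rn)}=(A_0u,u)_{\L^2({\bf X})}$ for $u\in\CT({\bf X})$ (which holds because $\tilde A_0 u$ and $A_0 u$ agree on ${\bf X}$, $u$ vanishes outside ${\bf X}$, and $A_0$ itself is symmetric), rather than just dropping the real part. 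For \eqref{ellip2}$\Rightarrow$\eqref{ellip1} the contrast is starker: the paper simply cites Levendorskii, Lemma 13.1, whereas you give a self-contained necessity argument by testing the form against the coherent-state-like family $u_\lambda$ concentrated at $(x_0,\lambda\xi_0)$ and extracting $a_{2m}(x_0,\xi_0)\geq C$ from the leading term as $\lambda\to\infty$. Your version is more elementary and keeps the lemma self-contained, at the cost of the stationary-phase bookkeeping you acknowledge; the paper's version is shorter but opaque. No gap, just different trade-offs.
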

\begin{proof}
Since $A_0+M\1$ is a classical symmetric pseudodifferential operator with principal symbol $a_{2m}$, the implication \eqref{ellip2} $\Rightarrow$ \eqref{ellip1} follows with \cite{levendorskii}, Lemma 13.1. Now, let us assume that (1) is fulfilled.
By compactness, there exists a constant $\eps>0$ such that, if ${\bf X}_\eps=\{x\in\rn : \mbox{dist}(x,{\bf X})<\eps\}$, one has
\bq\label{ellip3}
a_{2m}(x,\xi)\geq \frac{C_0}{2}\, |\xi|^{2m},\quad  \forall x\in {\bf X_\eps}, \, \forall \xi \in \rn.
\eq
The restriction of $A_0$ to ${\bf X}_\eps$ is of course in $L^{2m}_{1,0}({\bf X}_\eps)$ since ${\bf X}$ is bounded, and is elliptic in view of (\ref{ellip3}).
It is not properly supported in general but, according to \cite{shubin}, Proposition 3.3, there exists an operator $R$  with smooth kernel $K_R \in \Cinft({\bf X}_{\eps}\times{\bf X}_{\eps})$, and an operator $A_1$ in $L^{2m}_{1,0}({\bf X}_\eps)$ which is properly supported in ${\bf X}_\eps$ such that, on $\L^2({\bf X}_\eps)$,
$$A_0=A_1+R.$$
 $A_1$ is a classical pseudodifferential operator in ${\bf X}_\eps$, with the same principal symbol as $A_0$, and is elliptic on ${\bf X}_\eps$ in view of (\ref{ellip3}). Applying now  the G{\aa}rding inequality as stated in \cite{grigis-sjoestrand}, page 51, one  deduces the existence of a constant $C_1>0$ such that, for all $u\in\CT({\bf X}_\eps)$ with support in $\overline{{\bf X}}$,
$$\Re ((A_1+C_1\1)u,u)_{\L^2({\bf X}_\eps)}\geq \frac{1}{C_1} \norm{u}^2_{H^m({\bf X}_\eps)}.$$
Now, by the  Schwartz inequality, 
\bqn
\norm {Ru}_{\L^2({\bf{X}})}^2=\int_{\bf{X}} |Ru(x)|^2 dx\leq \int_{\bf{X}} \Big (\int_{\bf{X}} |K_R(x,y)|^2 dy \int _{\bf{X}} |u(z)|^2 dz\Big )  dx, \qquad u \in \CT({\bf{X}}),
\eqn
which implies that the restriction of $R$ to $\L^2({\bf X})$ is a bounded operator.
Consequently, there exists a constant $C_2>0$ such that for  $u \in \CT(\bf{X})$
\bqn
(A_0 +C_1\1 )u,u)_{\L^2(\bf{X})} \geq \frac{1}{C_1} \norm{u}^2_{H^m({\bf X}_\eps)} + \Re(Ru,u)_{\L^2(\bf{X})} \geq \frac{1}{C_1} \norm{u}^2_{H^m({\bf X})} -C_2 \norm{u}^2_{\L^2({\bf{X}})},
\eqn
and the assertion of the lemma follows.
\end{proof}
Next note that if $A_0$ were properly supported, then $A_0 \circ
\mathrm{ext}: \CT({\bf{X}}) \rightarrow \CT({\bf{X}}_1)$, where
${\bf{X}}_1$ is some compact set in $\rn$, see \cite{shubin},
Proposition 3.4. By continuity, this map  would extend to a  map
from $\D'({\bf{X}})$ to $\D'({\bf{X}}_1)$, but in general it is not immediately clear if the
restriction of $A_0$ to ${\bf X}$  extends to
$\mathcal{D}'({\bf X})$. Nevertheless, as a pseudodifferential
operator in the class $\Pi^{2m}_{1,0}(\rn)$, the operator
$A_0:\mathcal{S}(\rn)\to \mathcal{S}(\rn) $ extends to a mapping
from $\mathcal{S}'(\rn)$ to $\mathcal{S}'(\rn)$, see
\cite{hoermander79}. Therefore, if $u\in \L^2({\bf X})$, then $(A_0\circ
\mathrm{ext})(u)\in \mathcal{S}'(\rn)$, and via the inclusion
$\mathcal{S}'(\rn)\hookrightarrow \mathcal{D}'({\bf X})$, the
operator $\mathrm{res}\circ A_0\circ \mathrm{ext}$ extends naturally
to an operator from $\L^2({\bf X})$ to $\mathcal{D}'({\bf X})$. Let
us now assume that $A_0$ is symmetric, and that \eqref{ellip1} is
satisfied. Under these circumstances, the previous lemma implies that the  operator \bqn \mathrm{res}
\circ A_0 \circ \mathrm{ext}: \CT({\bf{X}}) \longrightarrow
\L^2({\bf{X}}) \eqn is lower semi-bounded, and we shall denote its
Friedrichs extension by $A$. It is a self-adjoint operator in
$\L^2({\bf{X}})$, and is itself lower semi-bounded. Its spectrum is
real. The following proposition shows that $A$ has compact
resolvent, which implies that the spectrum of $A$ is discrete, i.e.\ it consists 
of a sequence of isolated eigenvalues of finite multiplicity tending
to infinity, while the essential spectrum of $A$ is empty.
\begin{proposition}\label{cpctres} 
As an operator in $\L^2({\bf X})$, $A$ has compact resolvent . Moreover, $D(A)\subset H_0^m({\bf X})$ and 
\bq\label{ellip4}
((A+M)u,u)_{L^2({\bf X})}\geq C\,\norm{u}_{H^m({\bf X})}^2 \qquad \forall u\in D(A).
\eq
\end{proposition}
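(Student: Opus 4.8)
The plan is to realize $A$ (more precisely $A+M\1$) as the self-adjoint operator associated with a closed quadratic form, to identify its form domain with $H^m_0({\bf X})$ by means of the coercivity estimate \eqref{ellip2} of the previous lemma, and then to read off both \eqref{ellip4} and the compactness of the resolvent. The only analytic input beyond the previous lemma is that the embedding $H^m_0({\bf X})\hookrightarrow\L^2({\bf X})$ is compact, which holds for an arbitrary \emph{bounded} open set and hence requires no regularity of $\gd{\bf X}$.

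Concretely, consider on $\CT({\bf X})$ the symmetric form $\mathfrak q[u,v]=((A_0+M\1)u,v)_{\L^2({\bf X})}$. For $u\in\CT({\bf X})$ one has $((A_0+M\1)u,u)_{\L^2({\bf X})}=((A_0+M\1)u,u)_{\L^2(\rn)}$ since $u$ is supported in ${\bf X}$, and, $A_0+M\1$ being of order $2m$ and hence bounded $H^m(\rn)\to H^{-m}(\rn)$, together with the equivalence of $\norm{\cdot}_{H^m(\rn)}$ and $\norm{\cdot}_{H^m({\bf X})}$ on $\CT({\bf X})$, this yields the two-sided bound
\bqn
C\,\norm{u}_{H^m({\bf X})}^2\ \leq\ \mathfrak q[u,u]\ \leq\ C''\,\norm{u}_{H^m({\bf X})}^2,\qquad u\in\CT({\bf X}),
\eqn
the lower bound being exactly \eqref{ellip2}. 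Since $\mathrm{res}\circ(A_0+M\1)\circ\mathrm{ext}$ is a symmetric operator bounded below on $\CT({\bf X})$, the form $\mathfrak q$ is closable by the standard theory of the Friedrichs extension, its closure $\overline{\mathfrak q}$ being the form of $A+M\1$; moreover, by the displayed equivalence, the form domain $Q=D(\overline{\mathfrak q})$ is precisely the completion $H^m_0({\bf X})$ of $\CT({\bf X})$ in the $H^m({\bf X})$-norm. Hence $D(A)\subset Q=H^m_0({\bf X})$. Finally, $\overline{\mathfrak q}[u,u]=((A+M\1)u,u)_{\L^2({\bf X})}$ for $u\in D(A)$, and both $\overline{\mathfrak q}[u,u]$ and $\norm{u}_{H^m({\bf X})}^2$ are continuous on $Q$ with respect to the form norm while satisfying $\overline{\mathfrak q}[u,u]\geq C\,\norm{u}_{H^m({\bf X})}^2$ on the dense subspace $\CT({\bf X})$; the inequality therefore persists on all of $Q$, in particular on $D(A)$, which is \eqref{ellip4}.

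For the compact resolvent, choose a ball $B$ with $\overline{{\bf X}}\subset B$; extension by zero embeds $H^m_0({\bf X})$ isometrically into $H^m_0(B)$, and Rellich's theorem gives that $\iota\colon H^m_0({\bf X})\hookrightarrow\L^2({\bf X})$ is compact. Given $f\in\L^2({\bf X})$, put $u=(A+M\1)^{-1}f\in D(A)$; then $\overline{\mathfrak q}[u,u]=(f,u)_{\L^2({\bf X})}\leq\norm{f}_{\L^2({\bf X})}\norm{u}_{\L^2({\bf X})}$, and feeding this into \eqref{ellip4} yields $\norm{u}_{H^m({\bf X})}\leq C^{-1}\norm{f}_{\L^2({\bf X})}$, so that $(A+M\1)^{-1}\colon\L^2({\bf X})\to H^m_0({\bf X})$ is bounded. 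Composing with $\iota$ shows that $(A+M\1)^{-1}$ is compact as an operator on $\L^2({\bf X})$; since a self-adjoint, lower semi-bounded operator has compact resolvent as soon as a single resolvent is compact, the spectrum of $A$ is discrete. The step demanding a little care — though it is hardly a genuine obstacle — is the identification $Q=H^m_0({\bf X})$: it relies not only on the coercivity of the previous lemma but equally on the matching upper bound $\mathfrak q[u,u]\lesssim\norm{u}_{H^m({\bf X})}^2$ and on the coincidence of the $H^m({\bf X})$- and $H^m(\rn)$-norms on $\CT({\bf X})$, and it is precisely this that renders the (possibly very irregular) boundary $\gd{\bf X}$ irrelevant to the present proposition.
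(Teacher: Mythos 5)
Your proof is correct and follows essentially the same route as the paper: realize $A+M\1$ as the operator of the closed Friedrichs form built from $\CT({\bf X})$, use the coercivity estimate \eqref{ellip2} to place the form domain inside $H^m_0({\bf X})$, extend \eqref{ellip2} to $D(A)$ by density, and combine the resulting $\L^2\to H^m_0$ boundedness of the resolvent with the Rellich embedding to obtain compactness. The only (harmless) divergence is that you go further and prove the \emph{equality} $Q=H^m_0({\bf X})$ by supplying the matching upper bound $\mathfrak q[u,u]\lesssim\norm{u}_{H^m({\bf X})}^2$ via the $H^m\to H^{-m}$ boundedness of $A_0+M\1$; the paper only asserts and only uses the inclusion $Q(A)\subset H^m_0({\bf X})$, which already follows from coercivity alone, so your upper bound, while correct, is superfluous for this proposition.
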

Here,  $H_0^m ({\bf X})$ denotes the  closure of $\CT({\bf X})$ in $H^m ({\bf X})=\mklm{u \in \D'({\bf X}): \partial^\alpha u\in \L^2({\bf X}), \, |\alpha| \leq m}$ with respect to the Sobolev norm.
\begin{proof}
Put $\tilde{A}=\mathrm{res}\circ A_0\circ \mathrm{ext}:\textrm{C}^\infty_c({\bf X})\to \L^2({\bf{X}})$. In view of (\ref{ellip2}), $\tilde{A}$ is semi-bounded on $\CT({\bf X})$. Let $Q(A)$ be its form domain, that is, the completion of $\CT({\bf X})$ with respect to the norm $p(v)=\sqrt{(\tilde{A}+M)v,v)}$,  see \cite{reed-simonII}, page 177. $Q(A)$ is endowed with the limit norm $\norm{.}_{Q(A)}$ of $p$.
According to (\ref{ellip2}), $Q(A)\subset H^m_0({\bf X})$. Since $A$ is the Friedrichs extension of $\tilde{A}$, one has $D(A)\subset Q(A)$, and we obtain equation (\ref{ellip4}).
Let now  $\lambda<-M$. If $u\in D(A)$, the Schwartz inequality yields 
$$\norm{u}_{H^m({\bf X})}\leq C\norm{(A-\lambda)u}_{\L^2({\bf X})}$$
for some constant $C>0$. Thus, if $v\in \L^2({\bf X})$, 
$\norm{(A-\lambda)^{-1}v}_{H^m({\bf X})}\leq C \norm{v}_{\L^2({\bf X})}$.
Therefore $(A-\lambda)^{-1}$ is a continuous map from $\L^2({\bf X})$ to $H^m_0({\bf X})$. But the injection $H^m_0({\bf X})\hookrightarrow \L^2({\bf X})$ is compact by the Rellich theorem. Consequently, $A$ must have   compact resolvent.
\end{proof}

Consider  now a compact group of isometries  $G\subset \O(n)$ acting on Euclidean space $\R^n$, and assume that the bounded domain $\bf X$  in $\R^n$  is invariant under $G$. Then its boundary is $G$-invariant, too.
Let  $T$ be the unitary representation of $G$ in the Hilbert spaces $\L^2(\rn)$ and $\L^2(\bf{X})$ defined in \eqref{eq:T}, and assume that the operator $A_0$ commutes with the representation $\mathrm{T}$.  The $G$-action on ${\bf{X}}$ induces a Hamiltonian action of $G$ in the cotangent bundle  $T^\ast({\bf{X}})$ of $\bf{X}$ given by   
$$  G\times T^\ast({\bf{X}}) \rightarrow T^\ast({\bf{X}}): (k,x,\xi) \to \sigma_k(x,\xi)=(\kappa_k(x), ^t \kappa _k '(x)^{-1}( \xi))=(\kappa_k(x),\kappa_k(\xi)),$$ 
where we wrote $\kappa_k(x)=kx$. Now, since
\bqn
T(k) \Op^\tau(a) T(k^{-1}) =\Op^\tau( a \circ \sigma_k), \qquad a \in S(\tilde g, p),
\eqn
the $G$-invariance of $A_0$  is equivalent to the $G$-invariance of its symbol, by the uniqueness of the $\tau$-symbol. In particular, the principal symbol $a_{2m}$ of $A_0$ is invariant under $\sigma_k$ for all $k\in G$. Since  the operator $A$ is  also $G$-invariant, the  eigenspaces of $A$ are unitary $G$-modules that  decompose into irreducible subspaces.  The restriction of $A$ to the isotypic component $\mathrm{res} \, \H_\chi$ in the Peter-Weyl decomposition of $(\mathrm{T},\L^2({\bf{X}}))$ is called the reduced operator, and is denoted by $A_\chi$. Its domain is $D(A_\chi)=D(A)\cap\mathrm{res} \, \H_\chi $. As explained in \cite{cassanas},  $A_\chi$ inherits from $A$ the property of having compact resolvent, and the spectrum of $A$ is equal to the union over $\chi$ in $\hat{G}$ of the spectra of the operators $A_\chi$.

 Our purpose in this paper is to investigate the spectral counting function $N_\chi(\lambda)$ of $A_\chi$, which is given by  the number of eigenvalues of $A_\chi$,  counting multiplicities,  that are equal or  less than $\lambda\in\R$.
  It corresponds exactly the number of eigenvalues of $A$ equal or less than $\lambda$,   whose eigenfunctions belong to the $\chi$-isotypic component of $\L^2({\bf{X}})$, so that
  \bqn
N_\chi(\lambda) =d_\chi \sum_{t \leq \lambda} \mu_\chi(t),
\eqn
where  $\mu_\chi(t)$ denotes the multiplicity of the irreducible representation of dimension $d_\chi$ corresponding to the character $\chi$ in the eigenspace of $A$ with eigenvalue $t$.
We shall study $N_\chi(\lambda)$ using the method of approximate spectral projection operators, which was first introduced by Shubin and Tulovskii,  and adapted to the case of bounded domains by Levendorskii.
   It departs from the observation that
$$N(\lambda)=\tr( E_\lambda), $$
where $E_\lambda=\1_{(-\infty,\lambda]}(A)$ is the spectral projector of $A$ belonging to the value $\lambda$. The idea is then to approximate the $E_\lambda$  by means of certain pseudodifferential operators $\E_\lambda$. The trace of $ \E_\lambda$ should then give a good approximation of $N(\lambda)$. The approximate spectral projection operators $\E_\lambda$ will be constructed using Weyl quantization. In order to define them, we introduce now the relevant symbols. Thus, let $a_\lambda  \in S(g,1)$, and $d \in S(g,d)$ be $G$-invariant symbols which, on ${\bf{X}}_\rho \times \mklm{\xi: |\xi| >1}$, ${\bf{X}}_\rho = \mklm{ x: \dist(x, {\bf{X}}) <\rho}$, are given by
 \begin{align*}
  a_\lambda (x,\xi)&=\frac 1 {1+\lambda |\xi|^{-2m}} \Big ( 1 -\frac \lambda {a_{2m}(x,\xi)} \Big ),\\    d(x,\xi)&=|\xi|^{-1},
 \end{align*}
 where $\rho >0$ is some fixed constant, and in addition  assume that $d$ is positive and that $d(x,\xi)\to 0$ as $|x|\to \infty$. We also define
 $$b_\lambda (x,\xi) = a_\lambda(x,\lambda^{1/2m}\xi).$$
We  need to define smooth approximations to the Heaviside function, and to certain characteristic functions on  ${\bf{X}}$.
Thus, let $\tilde \chi$ be a smooth function on the real line satisfying
$0 \leq \tilde \chi \leq 1$, and
\bqn
\tilde \chi(s)=\left \{ \begin{array}{c} 1 \quad \text{for } s<0, \\ 0 \quad \text{for } s>1.
\end{array}\right.
\eqn
  Let $C_0>0$ and $\delta \in (1/4,1/2)$ be constants, and put $\omega=1/2 -\delta$. We then define the $G$-invariant functions
\bqn
\chi_\lambda=\tilde \chi\circ ((a_\lambda+4 h^{\delta-\omega}+8 C_0 d)\,  h^{-\delta}), \qquad \chi^+_\lambda=\tilde \chi\circ ((a_\lambda-4 h^{\delta-\omega}-8 C_0 d)\,  h^{-\delta}),
\eqn
where $0 < \delta -\omega< 1/2$. One can then show that
 $\chi_\lambda,\chi^+_\lambda \in S(h^{-2\delta}g,1)=\Gamma^0_{1-\delta,\delta}(\R^{2n})$ uniformly in $\lambda$, see Part I, Lemma 10. Next, let $U$ be a subset in ${\R^{2n}}$, $c >0$, and put
\bqn
U(c,g)= \mklm{(x,\xi)\in \R^{2n}: \exists   (y,\eta) \in U: g_{(x,\xi)} (x-y,\xi-\eta) < c};
\eqn
according to \cite{levendorskii}, Corollary 1.2, there exists a smoothened characteristic  function $\psi_c\in S(g,1)$  belonging to the set $U$ and the parameter $c$, such that  $\supp \psi_c \subset U(2c,g)$, and ${\psi_c}_{|U(c,g)}=1$.
 Let now
 \bqn
 \M_\lambda=\mklm{(x,\xi)\in \R^{2n}: a_\lambda< 4 h^{\delta-\omega} + 8 C_0 d}.
 \eqn
 Both $\M_\lambda$ and $\gd{\bf{X}} \times \R^n$ are invariant under $\sigma_k$ for all $k \in G$, as well as  $(\gd {\bf{X}}  \times \R^n)(c,h^{-2\delta} g)$, and $\M_\lambda(c,h^{-2\delta}g)$, due to the invariance of $a_{2m}(x,\xi)$, and the considered metrics and symbols. Now, let $\tilde \eta_c, \psi_{\lambda,c}\in S(h^{-2\delta} g,1)$ be smoothened characteristic functions corresponding to the parameter $c$, and the sets $\gd {\bf{X}}  \times \R^n$ and $\M_\lambda$, respectively. According to  Lemma  5 in Part I, we can assume that they are invariant under $\sigma_k$ for all $k \in G$; otherwise consider $ \int _{G} \tilde \eta_c \circ \,\sigma_k \,dk, \, \int_{ G} \psi_{\lambda,c} \circ \,\sigma_k \,dk $, respectively.
 We then define the functions
 \begin{align*}
\eta_{\lambda,-c}(x,\xi)&= \left \{ \begin{array}{cc} 0, & x \notin {\bf{X}}, \\ ( 1-\tilde \eta_c(x,\xi)) \psi_{\lambda,1/c}(x,\xi), & x \in {\bf{X}}, \end{array} \right.\\
\eta_{c}(x,\xi)&= \left \{ \begin{array}{cc} \tilde \eta_c(x,\xi),  & x \notin {\bf{X}}, \\  1, & x \in {\bf{X}}. \end{array} \right.
\end{align*}
Only  the support of $\psi_{\lambda,c}$ depends on $\lambda$, but not its growth properties, so that  $\eta_c,\eta_{\lambda,-c} \in S(h^{-2\delta}g,1)$ uniformly in $\lambda$.
Furthermore, since $\tilde \eta_{2c}=1$ on $\supp \tilde \eta_c$, and $\psi_{\lambda,1/c}=1 $ on $\supp \psi _{\lambda,1/2c}$, on has $\eta_{\lambda,-c}=1$ on $\supp \eta_{\lambda,-2c}$, which implies $\eta_{\lambda,-2c}\eta_{\lambda,-c} = \eta_{\lambda,-2c}.$ Similarly,  one verifies $\eta_c \eta_{2c}=\eta_c$.
We are now ready to define the  approximate spectral projection operators.

  \begin{definition}
The approximate spectral projection operators of the first and second kind are defined by the equations
\bqn
\tilde \E_\lambda= \Op^w (\eta_{\lambda,-2})\, \Op^w(\chi_\lambda)\, \Op^w( \eta_{\lambda,-2}), \qquad \E_\lambda=\tilde \E_\lambda^2(3 -2\tilde \E_\lambda),
 \eqn
while the ones of the third and fourth kind are given by
\bqn
\tilde \F_\lambda= \Op^w (\eta^2_{2}\chi^+_\lambda), \qquad \F_\lambda=\tilde \F_\lambda^2(3 -2\tilde \F_\lambda),
\eqn
\end{definition}
Both $\E_\lambda$ and $\F_\lambda$ are integral operators with kernels in  $\S(\R^{2n})$.
 By Lemma 7.2 in \cite{hoermander79}, this implies that $\E_\lambda$ and $\F_\lambda$ are of trace class and, in particular, compact operators in $\L^2(\R^n)$.  In addition, by  Theorem 2, and the asymptotic expansion {(10)} in Part I,  one has $\sigma ^\tau(\E_\lambda)$, $\sigma^\tau(\F_\lambda)\in S(h^{-2\delta} g,1)$ uniformly in $\lambda$.
 On the other hand, all the involved symbols are real valued, which by general Weyl calculus implies that $ \Op^w (\eta_{\lambda,-2})$, $\Op^w(\chi_\lambda) $, $\Op^w(\eta^2_2 \chi^+_\lambda)$, and consequently also  $\E_\lambda$, and $\F_\lambda$,  are self-adjoint operators in  $\L^2(\R^n)$. Let $P_\chi$ denote the orthogonal projector defined in \eqref{eq:P} onto the isotypic component of the Peter-Weyl decomposition of $(\mathrm{T},\L^2(\rn))$  corresponding to the character $\chi$. By construction, both $\E_\lambda$   and $\F_\lambda$ commute with the projection $P_\chi$,  so that  $P_\chi \E_\lambda$ and $P_\chi \F_\lambda$ are self-adjoint operator of trace class as well.
 Although the decay properties of $\sigma^\tau(\E_\lambda)$,  $\sigma^\tau(\F_\lambda)$  are independent of $\lambda$, their supports do depend on $\lambda$, which will result in  estimates for the trace of $P_\chi\E_\lambda$  and $P_\chi \F_\lambda$ in terms of  $\lambda$ that will be used in order to prove Theorem \ref{thm:1}. In particular, the estimate for the remainder term in Theorem \ref{thm:1} is determined by   the particular choice of the range $(1/4,/1/2)$ for the parameter $\delta$, which  guarantees that $1-\delta >\delta$.

The method of approximate spectral projection operators is based on variational arguments. Thus, if $S$ is a symmetric, lower semi-bounded operator in a separable Hilbert space, and  if $V$ is a subspace contained in its domain $ \mathcal{D}(S)$,   the variational quantity
\bqn
\label{eq:15}
\Ncal(S,V)=\sup_{L\subset V}\{\dim L: ( S \,u,\,u) <0 \quad \forall \,\, 0\not=u \in \L\}
\eqn
can be used to give a qualitative description of the spectrum of $S$.
In our case one has
$$ N_\chi(\lambda)=\Ncal(A_0-\lambda \1, \H_\chi \cap \CT({\bf{X}})).$$
Indeed,  the Friedrichs extension of $\mathrm{res} \circ A_0 \circ \mathrm{ext}: \CT({\bf{X}}) \cap \H_\chi \longrightarrow \mathrm{res}\,  \H_\chi$ is given by $A_\chi$, and the assertion follows with \cite{levendorskii}, Lemma A.2.
Now by  the general theory of compact, self-adjoint operators, zero is the only accumulation point of the point spectra of $\E_\lambda$ and $\F_\lambda$, as well as the only point that could possibly belong to the continuous spectrum. Therefore the number of eigenvalues of $\E_\lambda$ which are $\geq 1/2$, and whose eigenfunctions belong to the isotypic component $\H_\chi$ is clearly finite, and shall be denoted by $N^{\E_\lambda}_\chi$. Similarly, the  number of eigenvalues of the operators $\F_\lambda$
which are $\geq 1/2$, and whose eigenfunctions belong to the isotypic component $\H_\chi$,   shall be denoted by $N^{\F_\lambda}_\chi$. As it was shown in Part I, Theorems 5 and 6, these quantities constitute upper and lower bounds for  the spectral counting function $N_\chi(\lambda)$, namely
\bqn
 N^{\E_\lambda}_\chi -C \leq  \Ncal(A_0-\lambda \1, \H_\chi \cap \CT({\bf{X}})) \leq M^{\F_\lambda}_\chi +C
\eqn
for some constant $C>0$.
Furthermore, by Lemmata 11 and 12  of Part I one has
\begin{align*}
2 \tr (P_\chi \E_\lambda \cdot P_\chi \E_\lambda) -\tr P_\chi \E_\lambda -c_1 &\leq N_\chi^{\E_\lambda} \leq 3 \tr P_\chi \E_\lambda -2 \tr (P_\chi \E_\lambda\cdot P_\chi \E_\lambda) +c_2,\\
2 \tr (P_\chi \F_\lambda \cdot P_\chi \F_\lambda) -\tr P_\chi \F_\lambda -c_1 & \leq N_\chi^{\F_\lambda} \leq 3 \tr P_\chi \F_\lambda -2 \tr (P_\chi \F_\lambda \cdot P_\chi \F_\lambda) +c_2,
\end{align*}
for some constants $c_i>0$. The study of the asymptotic behaviour of $N_\chi(\lambda)$ is therefore reduced to an examination of the traces of   $ P_\chi \E_\lambda $ and $ P_\chi \F_\lambda$, together with their squares, and  will occupy us for the rest of this paper.

\section{Compact group actions and the  principle of the stationary phase}

In this section, we shall begin to estimate the traces of $P_\chi\E_\lambda$ and $P_\chi \F_\lambda$ using the method of the stationary  phase, in order to obtain a description of the spectral counting function $N_\chi(\lambda)$ as $\lambda \to +\infty$. As  mentioned in the introduction, first order asymptotics for invariant elliptic operators were  already obtained in \cite{donnelly, bruening-heintze} in the general case of effective group actions by using heat kernel methods; nevertheless, estimates for the remainder are  not accessible via this approach. On the other hand, the derivation of remainder estimates within the framework of Fourier integral operators or, as we  shall see, within the setting of approximate spectral  projections, meets with serious difficulties when singular orbits are present. The reason for this is that, using these approaches, one is led to the study of the asymptotic behavior of integrals of the form
\bq
\label{eq:equivSPint}
\int _G\int_{\rn} \int_{\R^n}  e^{i (x-kx) \xi/\mu }  a(x,\xi,k) dx \, \dbar \xi dk, \qquad \mu \to 0^+,
\eq
via the generalized stationary phase theorem,  where $a(x,\xi,k)\in \CT(\rn \times \rn \times G)$ is an amplitude which might also depend on $\mu$. While for free group actions, the critical set of the phase function $(x-kx) \xi$ is a smooth manifold, this is no longer the case for general effective actions, so that, a priori, the principle of the stationary phase can not be applied in this case. Nevertheless, in what follows, we shall show how to circumvent this obstacle by partially resolving the singularities of the critical set of the phase function in question, and in this way obtain remainder estimates for $N_\chi(\lambda)$  in the case of singular group actions.  Let us begin by stating the generalized stationary phase theorem.
\begin{theorem}[Generalized stationary phase theorem for manifolds]
\label{thm:SP}
Let $M$ be a  $n$-dimensional Riemannian manifold,  $\psi \in \Cinft(M)$ be a real valued phase function, $a \in \CT(M)$, $\mu >0$, and set
\bqn
I({\mu})=\int_M e^{i\psi(m)/\mu} a(m) \, dm,
\eqn
where $dm$ denotes the volume form on $M$. Let $\mathcal C=\mklm{m \in M: \psi':TM_m \rightarrow T\R_{\psi(m)} \text{ is zero}}$ be the  critical set of the phase function $\psi$, and assume that
\begin{enumerate}
\item[(i)] $\mathcal{C}$ is a smooth submanifold of $M$  of dimension $p$ in a neighborhood of the support of $a$;
\item[(ii)]for all $m \in \mathcal{C}$, the restriction $\psi''(m)_{|N_m\mathcal{C}}$ of the Hessian of $\psi$ at the point $m$  to the normal space $N_m\mathcal{C}$ is a non-degenerate quadratic form.
\end{enumerate}
\noindent
Then, for all $N \in \N$, there exists a constant $C_{N,\psi}>0$ such that
\bqn
|I(\mu) - e^{i\psi_0/\mu}(2\pi \mu)^{\frac {n-p}{2}}\sum_{j=0} ^{N-1} \mu^j L_j (\psi;a)| \leq C_{N,\psi} \mu^N \vol (\supp a \cap \mathcal{C}) \sup _{l\leq 2N} \norm{D^l a }_{\infty,M},
\eqn
where $D^l$ is a differential operator on $M$ of order $l$, and $\psi_0$ is the constant value of $\psi$ on $\mathcal{C}$. Furthermore, for each $j$ there exists a constant $\tilde C_{j,\psi}>0$ such that 
\bqn
|L_j(\psi;a)|\leq \tilde C_{j,\psi}  \vol (\supp a \cap \mathcal{C}) \sup _{l\leq 2j} \norm{D^l a }_{\infty,\mathcal{C}},
\eqn
and, in particular,
\bqn
L_0(\psi;a)= \int _{\mathcal{C}} \frac {a(x)}{|\det \psi''(m)_{|N_m\mathcal{C}}|^{1/2}} d\sigma_{\mathcal{C}}(m) e^{ i \pi\sigma_{\psi''}},
\eqn
where $\sigma_{\psi''}$ is the constant value of the signature of $\psi''(m)_{|N_m\mathcal{C}}$ for $m$ in $\mathcal{C}$.
\end{theorem}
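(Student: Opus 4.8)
The plan is to localize the integral with a partition of unity, straighten $\mathcal{C}$ together with the phase by a Morse--Bott normal form, and then apply the classical stationary phase lemma in the transverse directions with $\mathcal{C}$ playing the role of a parameter space.

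First I would cover $\supp a$ by finitely many coordinate charts, pick a subordinate partition of unity $(\phi_\nu)$, and write $I(\mu)=\sum_\nu \int_M e^{i\psi(m)/\mu}\,(\phi_\nu a)(m)\,dm$. For the charts whose closure does not meet $\mathcal{C}$, the differential of $\psi$ does not vanish on $\supp(\phi_\nu a)$; integrating by parts sufficiently often along the vector field $\norm{\nabla\psi}^{-2}\nabla\psi$ (non-stationary phase) shows the corresponding term is $O(\mu^{N})$ with a constant depending only on finitely many derivatives of $\psi$ and on $\sup_{l\le 2N}\norm{D^l a}_{\infty,M}$; these contributions are absorbed into the remainder. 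It therefore suffices to treat a chart $U$ meeting $\mathcal{C}$, and, shrinking $U$ and using the tubular neighbourhood theorem, I may assume $U$ carries coordinates $x=(x',x'')\in\R^p\times\R^{n-p}$ with $\mathcal{C}\cap U=\mklm{x''=0}$ and $dm=\varrho(x',x'')\,dx'\,dx''$ with $\varrho>0$ smooth.

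Next, hypotheses (i) and (ii) are precisely what the parameter-dependent Morse lemma (the Morse--Bott, or splitting, lemma) requires: after a further change of the $x''$-coordinates depending smoothly on $x'$, one has
\bqn
\psi(x',x'')=\psi_0+\oh\,\eklm{A(x')x''\,,\,x''},
\eqn
with $A(x')$ smooth, symmetric, invertible and of constant signature $\sigma_{\psi''}$. Pushing the integral forward, each local piece becomes $e^{i\psi_0/\mu}\int\!\!\int e^{i\eklm{A(x')x'',x''}/2\mu}\,b(x',x'')\,dx''\,dx'$, where $b$ absorbs $\phi_\nu a$, the Jacobian of the coordinate change, and $\varrho$, so $b\in\CT$ with all $C^{k}$-norms bounded by $C_{k,\psi}\norm{a}_{C^{k}}$. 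For each fixed $x'$ I apply the flat stationary phase lemma for the non-degenerate quadratic phase $x''\mapsto\oh\eklm{A(x')x'',x''}$ on $\R^{n-p}$: this yields an asymptotic expansion in $\mu$ whose leading term is $(2\pi\mu)^{(n-p)/2}\,\modulus{\det A(x')}^{-1/2}\,e^{i\pi\sigma_{\psi''}/4}\,b(x',0)$ and whose $j$-th coefficient is a constant-coefficient differential operator in $x''$ (built from $A(x')^{-1}$) applied to $b(x',\cdot)$ at $x''=0$, with remainder bounded by $C_{N,\psi}\,\mu^{N+(n-p)/2}\sup_{|\alpha|\le 2N}\sup_{x''}\modulus{\partial_{x''}^{\alpha}b(x',x'')}$. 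Integrating over $x'$ over its compact support contributes a factor $\le C\,\vol(\supp a\cap\mathcal{C})$ to the error; summing over $\nu$, using that $\modulus{\det A(x')}$ equals $\modulus{\det\psi''(m)_{|N_m\mathcal{C}}}$ computed with respect to the induced metric and that $\varrho(x',0)\,dx'$ agrees with $d\sigma_{\mathcal{C}}$, one recovers the stated expansion; the formula for $L_0$ drops out of the leading term, while the corrections produced by the $x'$-dependence of $A$ and of the volume density feed into the $L_j$ with $j\ge 1$, which are then visibly of the asserted form $\modulus{L_j(\psi;a)}\le\tilde C_{j,\psi}\,\vol(\supp a\cap\mathcal{C})\,\sup_{l\le 2j}\norm{D^{l}a}_{\infty,\mathcal{C}}$. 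Chart- and partition-independence of the $L_j$ follows from uniqueness of asymptotic expansions.

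The main obstacle is the bookkeeping of the remainder: one must guarantee that, after the Morse--Bott change of variables, the fibrewise flat stationary phase, and the integration over $\mathcal{C}$, the total error is controlled by $\mu^{N}$ times $\vol(\supp a\cap\mathcal{C})$ times \emph{only} $\sup_{l\le 2N}\norm{D^{l}a}_{\infty}$ --- not by higher derivatives of $a$, nor by $\vol(\supp a)$. This needs a version of the flat stationary phase lemma with parameters in which the constant is uniform in the parameter $x'$ and the remainder involves only finitely many transverse derivatives of the amplitude, together with the observation that all Jacobians arising from straightening $\psi$ and $\mathcal{C}$ and from the partition of unity contribute only $\psi$-dependent constants. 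The parameter-dependent Morse lemma, although standard, must also be invoked in a form whose coordinate change is smooth jointly in $(x',x'')$, so that these bounds are preserved.
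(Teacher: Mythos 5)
The paper does not actually prove this theorem; it simply cites \cite{hoermanderI,combescure-ralston-robert}. Your sketch is the standard argument behind those references: cut off away from $\mathcal{C}$ and integrate by parts (non-stationary phase); pass to tubular-neighbourhood coordinates $(x',x'')$ with $\mathcal{C}=\{x''=0\}$; use the parametric Morse(--Bott) normal form, which is available precisely because of hypotheses (i) and (ii); apply the flat non-degenerate stationary phase lemma fibrewise; and integrate the resulting expansion over $\mathcal{C}$. So your route is essentially the one the paper delegates to.

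Two points deserve a caveat, both concerning constants rather than the structure of the argument. First, when you sum the fibrewise remainder over $x'$, the factor you actually produce is $\vol\big(\pi(\supp b)\big)$, where $\pi$ is the tubular projection onto $\mathcal{C}$, not $\vol(\supp a\cap\mathcal{C})$. These need not agree: if $\supp a$ is a transverse slab that misses $\mathcal{C}$, then $\vol(\supp a\cap\mathcal{C})=0$ and all $L_j=0$, yet $I(\mu)$ is only $O(\mu^\infty)$, not zero. So as stated (both in the theorem and in your proof) the bound with the factor $\vol(\supp a\cap\mathcal{C})$ is slightly too strong; the honest version either allows an extra constant depending on the chosen tubular neighbourhood or replaces the volume by that of a fixed neighbourhood of $\supp a\cap\mathcal{C}$ in $\mathcal{C}$. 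In the later applications in the paper this factor is just a fixed number, so nothing breaks there, but your proof should be read with that caveat. Second, your leading factor $e^{i\pi\sigma_{\psi''}/4}$ is the correct Gaussian normalisation; the paper's $e^{i\pi\sigma_{\psi''}}$ appears to be a misprint, so do not try to reconcile your computation with it.
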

\begin{proof}
See \cite{hoermanderI,combescure-ralston-robert}.
\end{proof}

From now  on, we shall restrict ourselves to the study of $\tr  P_\chi \E_\lambda$, since the corresponding considerations for $\F_\lambda$ are completely analogous. Let therefore $\sigma^l(\E_\lambda)(x,\xi)$ denote the left symbol of $\E_\lambda$. Since $\sigma^l(\E_\lambda)$ is $G$-invariant, we have
 \bqn
 P_\chi \E_\lambda u(x)=  {d_\chi} \int_G\int \int \overline{\chi(k)} e^{i( x -k y ) \xi} \sigma^l(\E_\lambda) ( x, \xi ) u(y)  dy \, \dbar \xi \, dk, \qquad   u \in \CT(\R^n).
 \eqn
 The kernel of $P_\chi\E_\lambda$, which is a rapidly decreasing function, is   given by the absolutely convergent integral
 \bqn
 K_{P_\chi \E_\lambda}(x,y)=  {d_\chi}\int_G \int  \overline{\chi(k)}e^{i( x -k y ) \xi} \sigma^l(\E_\lambda) ( x, \xi)    \, \dbar \xi\, dk.
 \eqn
Consequently, the trace of $P_\chi \E_\lambda$ can  be computed by
 \begin{gather*}
 \tr P_\chi \E_\lambda=\int K_{P_\chi \E_\lambda}(x,x) dx
 =d_\chi\int_G\int \int  \overline{\chi(k)} e^{i(x- k x) \xi} \sigma^l(\E_\lambda) (x, \xi ) dx \, \dbar \xi dk.
 \end{gather*}
 As already noticed, the decay properties of $\sigma^l(\E_\lambda) \in S(h^{-2\delta}g,1)=\Gamma_{1-\delta,\delta}^0(\rnn)$ are independent of $\lambda$ , while its support does depend on $\lambda$.  Indeed, as it was already explained in Part I,  equation (51),
 \bq
\label{eq:23b}
\sigma^l(\E_\lambda) = (\eta_{\lambda,-2}^2 \chi_\lambda)^2 ( 3- 2 \eta_{\lambda,-2}^2\chi_\lambda) + f_\lambda + r_\lambda,
\eq
where $r_\lambda \in S(h^{-2\delta}g,h^{N(1-2\delta)})$ for arbitrary large $N$, and $ f_\lambda \in S(h^{-2\delta} g, h^{1 - 2\delta})$,  everything uniformly in $\lambda$. Moreover, in Lemma \ref{lemma:11} we shall see that
\bqn
\supp f_\lambda\subset A_{c,\lambda}=\mklm{(x,\xi) \in {\bf{X}} \times \rn: a_\lambda < c(h^{\delta-\omega}+d)}.
\eqn
 Now, since $|r_\lambda(x,\xi)| \leq C'(1+|x|^2+|\xi|^2)^{-N/2}$ for some constant $C'$ independent of $\lambda$ and $N$ arbitrarily large, we get the uniform bound
\bdm
\int \int |r_\lambda(x,\xi)| dx \, \dbar \xi \leq C;
\edm
note that the $x$-dependence of $h(x,\xi)$ is crucial at this point. In order to determine the asymptotic behaviour of  $\tr P_\chi \E_\lambda$ with respect to $\lambda$, we can therefore neglect  the contribution coming from $r_\lambda(x,\xi)$, so that
 \begin{gather*}
 \tr P_\chi \E_\lambda=
d_\chi\int_G\int \int  \overline{\chi(k)} e^{i(x-k x) \xi}[(\eta_{\lambda,-2}^2 \chi_\lambda)^2 ( 3- 2 \eta_{\lambda,-2}^2\chi_\lambda) + f_\lambda]dx \, \dbar \xi dk +O(1),\\
 \end{gather*}
as  $\lambda$ goes to infinity. To apply the generalized stationary phase theorem, we introduce the new parameter
 \bqn
 \mu=\lambda^{-1/2m}, \qquad \lambda=\mu^{-2m},
 \eqn 
 and performing the  change of variables
 $$\Psi_\mu:(x,\xi)\mapsto (x,\mu\xi)$$
 we obtain
$$\tr P_\chi \E_\lambda=d_\chi\lambda^{n/2m} I(\lambda^{-1/2m})+O(1), $$
where we set
\begin{align}
\label{eq:15b}
\begin{split}
I(\mu)&=\int _G\int_{\bf X} \int_{\R^n}  e^{\frac{i}{\mu} \psi(x,\xi,k) }  \overline{\chi(k)}\sigma_\mu(x,\xi) dx \, \dbar \xi dk,\\
\psi(x,\xi,k)&=( x- k x) \xi,\\
\sigma_\mu &=[(\eta_{\lambda,-2}^2 \chi_\lambda)^2 ( 3- 2 \eta_{\lambda,-2}^2\chi_\lambda) + f_\lambda]\circ \Psi_\mu^{-1}.
\end{split}
\end{align}
As we shall see later,  there exists a compact subset $K \subset \R^{2n}$, such that $\sigma_\mu$ has  support in $K$ for all $\mu>0$, see \eqref{eq:suppsigma}. To get an asymptotic expansion of $I(\mu)$ as $\mu\to 0^+$  via the generalized stationary phase theorem, we commence by examining  the critical set
 \bq
 \mathcal C=\mklm{ (x,\xi,k) \in {\bf X}\times\mathbb{R}^{n}\times G: \psi' (x,\xi,k) =0}
 \eq
  of the phase function $\psi$. After a straightforward  computation we obtain
  \bqn
  \mathcal{C}=\mklm{(z,k)\in \Omega_0 \times G: kz=z},
   \eqn
    where we put $z=(x,\xi)$, and
\bqn
  \Omega_0=\mklm{(x,\xi) \in {\bf{X}}\times\R^{n}: \eklm {Ax,\xi}=0 \text{ for all } A \in \g},
\eqn
$\g$ being the Lie algebra of $G$. $\eklm{\cdot,\cdot}$ denotes the Euclidean product in $\rn$.
Note that $\Omega_0$ is invariant under the  Hamiltonian action of $G$ on the cotangent space $T^\ast(\bf{X})$ given by $(x,\xi) \mapsto (kx,k\xi)$, as well as homogeneous with respect to $x$ and $\xi$. It has the following interpretation in terms of the Hamiltonian action of $G$ on $T^\ast({\bf{X}})$. If $(A_1,\dots, A_d)$ is a basis of $\g$,  let
\bqn
\mathbb{J}:T^\ast({\bf{X}})\simeq {\bf X}\times \rn\to \g\simeq \R^d, \quad (x,\xi) \to (\eklm{A_1x,\xi},\dots,\eklm{A_dx,\xi}),
\eqn
 be the associated momentum map,  and denote by
 \bqn
 \Omega_0/G=\mathbb{J}^{-1}(\{ 0 \})/G
 \eqn
the symplectic quotient of $T^\ast({\bf{X}})$ at level zero.  This quotient is naturally related to the critical set of the phase function  in question, and  we shall prove that   $N_\chi(\lambda)$ is asymptotically determined by  a certain volume of the quotient $\Omega_0/G$. Now, the major difficulty in applying the generalized stationary phase theorem in our setting  stems from the fact that, due to the singular orbit structure of the underlying group action,  the zero level $\Omega_0$ of the momentum map, and, consequently, the considered critical set $\mathcal{C}$, are in general singular varieties. In fact,  if the $G$-action on $T^\ast({\bf X)}$ is not free, the considered momentum map is no longer a submersion, so that $\Omega_0$ and $\Omega_0/G$ are not smooth anymore. 
To circumvent this difficulty, we will partially resolve the singularities of  $\mathcal C$ by   constructing a partial resolution of $\Omega_0$, which takes into account the singular orbit structure of the underlying $G$-action, and then apply the stationary phase theorem in the resolution space.
\footnote{As we shall see in Section 5, 
$$\sigma_\mu(x,\xi){\longrightarrow} \1_{\{a_{2m}\leq 1\}}(x,\xi) \quad  \qquad \text{as } {\mu\to 0^+},$$
where $\1_A$ stands for the characteristic function of the set $A$. By homogeneity, $a_{2m}(0,0)=0$, so that zero is contained in the support of $\1_{\{a_{2m}\leq 1\} }$. In general, $\sigma_\mu$ is therefore not supported away from the set of singular points of $\mathcal C$, since $(0,0)$ is always a singularity of $\Omega_0$ in case that $0 \in {\bf{X}}$.}

In what follows, we shall therefore briefly recall some basic  notions of the theory of compact group actions. For a detailed exposition,  we refer the reader  to \cite{bredon} or \cite{kawakubo}. Let $G$ be a compact Lie group acting locally smoothly on some $n$-dimensional $\Cinft$-manifold $M$, and denote the stabilizer, or isotropy group, of $x\in M$ by
$$G_x=\{ k\in G\, :\, k\cdot x=x \}.$$
The orbit of a point $x \in M$  under the action of $G$ will be denoted by $\Ocal_x$.
Assume that $M/G$ is connected.  
One of the main results in the theory of compact group actions is the following
\begin{theorem}[Principal orbit theorem]
There exists a maximum orbit type $G/H$ for $G$ on $M$. The union $M_{(H)}$ of orbits of type $G/H$ is open and dense, and its image in $M/G$ is connected.
\end{theorem}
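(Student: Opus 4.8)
The plan is to prove the statement by induction on $\dim M$, combining the slice theorem for compact group actions with a local-to-global argument that uses the connectedness of $M/G$. Recall that orbit types are partially ordered by $(H) \le (K)$ iff $H$ is conjugate in $G$ to a subgroup of $K$, that a \emph{maximum} orbit type $G/H$ is one with $(H) \le (G_x)$ for all $x \in M$ (so $G/H$ maps onto every orbit), and that for closed subgroups $K_1, K_2$ of the compact Lie group $G$ the relations $(K_1)\le(K_2)$ and $(K_2)\le(K_1)$ together imply $(K_1)=(K_2)$, since a continuous injective self-homomorphism of a compact Lie group is onto. This last remark will be invoked for uniqueness of minimal isotropy types, both locally and globally.

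First I would establish the local statement, strengthened so as to feed the induction: every $x\in M$ has a $G$-invariant neighbourhood $U$ with a unique minimal isotropy type $(K)$, such that $U_{(K)}$ is open and dense in $U$ and $U_{(K)}/G$ is connected. By the slice theorem, a $G$-invariant neighbourhood of the orbit of $x$ is equivariantly diffeomorphic to the twisted product $G\times_H V$, where $H=G_x$ and $V$ is the (orthogonal) slice representation, the isotropy group of $[g,v]$ being $gH_vg^{-1}$ with $H_v\subseteq H$ the stabiliser of $v$. If $H$ acts trivially on $V$, then $(H)$ is itself minimal, $U=G/H\times V$, and $U/G=V$ is connected, so we are done. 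Otherwise, fixing an $H$-invariant inner product on $V$, consider the unit sphere $S(V)$; since $\dim S(V)<\dim M$ and (after disposing of the cases $\dim V\le 1$ directly) $S(V)$, hence $S(V)/H$, is connected, the inductive hypothesis applied to the $H$-action on $S(V)$ produces a minimal isotropy type $(K)_H$ whose stratum $S(V)_{(K)}$ is open, dense, and has connected quotient. Scaling radially and passing through $G\times_H(-)$, one checks that $(K)$ is the unique minimal isotropy type on $U$, that $U_{(K)}=G\times_H(V\setminus\{0\})_{(K)}$ is open and dense, and that $U_{(K)}/G\cong (0,\infty)\times\big(S(V)_{(K)}/H\big)$ is connected, while $U/G=V/H$ is connected as well.

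Next I would globalize. Grouping these neighbourhoods by their local minimal type, the sets $A_{(H)}:=\bigcup\{U : U \text{ has local minimal type }(H)\}$ are $G$-invariant and open, and they are pairwise disjoint: on a nonempty overlap $U\cap U'$ the open dense strata of $U$ and $U'$ would have to meet, yielding a point whose isotropy type is simultaneously $(K)$ and $(K')$, forcing $(K)=(K')$ by the remark above. Since $\pi\colon M\to M/G$ is open and the $A_{(H)}$ are $G$-saturated, the images $\pi(A_{(H)})$ are open, pairwise disjoint, and cover the connected space $M/G$, so exactly one is nonempty: a global minimal isotropy type $(H)$ exists, $M_{(H)}$ is open and dense, and $G/H$ is the maximum orbit type (unique, again by the remark). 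For the connectedness of $M_{(H)}/G$ I would argue by contradiction: if $M_{(H)}/G=P\sqcup Q$ with $P,Q$ open and nonempty in $M/G$, then since $M_{(H)}/G$ is open and dense the closures $\bar P,\bar Q$ are closed, cover $M/G$, and — $M/G$ being connected — meet at some $x^*$; choosing a neighbourhood $U/G\cong V/H$ of $x^*$ of the local form above, the connected set $U_{(H)}/G=(U/G)\cap(M_{(H)}/G)$ would meet both $P$ and $Q$, a contradiction.

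The main obstacle is threading the induction cleanly through the slice representation: one must ensure the orbit space of the relevant lower-dimensional action is connected so that the inductive hypothesis genuinely applies (whence the separate treatment of $\dim V\le 1$ and of the trivial slice action), and one must carry the connectedness of the principal stratum's \emph{quotient} as part of the inductive statement rather than hoping to recover it only at the end — it is exactly this strengthened hypothesis that powers the final local-to-global connectedness argument.
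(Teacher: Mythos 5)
Your proof is correct and follows essentially the standard slice-theorem induction that Bredon gives for Theorem IV.3.1, which is the reference the paper cites in lieu of a proof. The key ingredients you use --- induction on $\dim M$ through the slice representation $G\times_H V$, antisymmetry of the orbit-type order for compact Lie groups, and the local-to-global connectedness argument carried with a strengthened inductive hypothesis (connectedness of the principal stratum's quotient) --- all match that treatment.
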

\begin{proof}
See \cite{bredon}, Theorem IV.3.1.
\end{proof}
Orbits of type $G/H$ are called of principal type, and the corresponding isotropy groups are called principal. A principal isotropy group  has the property that it is  conjugated to a subgroup of each stabilizer of $M$. The following result says that there is a stratification of the considered $G$-space into orbit types.
\begin{theorem}
Let $K$ be a subgroup of $G$, and denote the set of points on orbits of type $G/K$ by $M_{(K)}$. Then $M_{(K)}$ is a topological manifold, which is locally closed. Furthermore, $\overline{M_{(K)}}$ consists of orbits of type less than or equal to type $G/K$. The orbit map $M_{(K)} \rightarrow M_{(K)}/G$ is a fiber bundle projection with fiber $G/K$ and structure group $N(K)/K$.
\end{theorem}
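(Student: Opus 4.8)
The plan is to deduce the whole statement from the slice theorem for smooth actions of compact Lie groups (see \cite{bredon}, Chapter~IV, or \cite{kawakubo}). Averaging a Riemannian metric over $G$ against the Haar measure, one may assume that the $G$-action on $M$ is isometric. For $x\in M$ with isotropy $G_x=H$, the exponential map then carries a disc subbundle of the normal bundle of the orbit $\Ocal_x$ $G$-equivariantly and diffeomorphically onto an invariant tubular neighbourhood $U$ of $\Ocal_x$, which is thereby identified with the associated bundle $G\times_H S_x$, where $S_x$ is an $H$-invariant linear slice at $x$. The elementary fact I would record first is that the isotropy group of the point $[e,v]\in G\times_H S_x$ equals $H_v=\{h\in H:hv=v\}\subseteq H=G_x$; hence every isotropy group occurring in $U$ is a subgroup of $H$, and, since a compact Lie group cannot be conjugate in $G$ to a proper subgroup of itself, $[e,v]$ has orbit type exactly $G/H$ if and only if $v$ lies in the fixed-point set $S_x^H$ of the linear $H$-action, which is a linear subspace of $S_x$.

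First I would establish the local structure of $M_{(K)}$. Given $x\in M_{(K)}$, conjugate $K$ so that $G_x=K$, and choose a tube $U\simeq G\times_K S_x$ as above. By the remark of the previous paragraph, the points of $U$ of orbit type $G/K$ are precisely those $G$-equivalent to a point of $S_x^K$, on which $K$ acts trivially, so that
\bqn
M_{(K)}\cap U\;\simeq\;G\times_K S_x^K\;\simeq\;(G/K)\times S_x^K.
\eqn
This exhibits $M_{(K)}\cap U$ as a smooth manifold, and, since $S_x^K$ is closed in $S_x$, it also shows that $M_{(K)}$ is closed in $U$; hence $M_{(K)}$ is locally closed in $M$. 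Passing to the quotient, $(M_{(K)}\cap U)/G\simeq S_x^K$, so $M_{(K)}/G$ is a manifold, and over $U$ the orbit map becomes the projection $(G/K)\times S_x^K\to S_x^K$, a locally trivial bundle with fiber $G/K$.

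Next I would treat the closure statement. If $x\in\overline{M_{(K)}}$, choose a tube $U\simeq G\times_{G_x}S_x$ about $\Ocal_x$; as $U$ is a neighbourhood of $x$ it meets $M_{(K)}$, so some $[e,v]\in U$ has isotropy $(G_x)_v\subseteq G_x$ conjugate to $K$, whence $K$ is subconjugate to $G_x$, i.e.\ the orbit type of $x$ is less than or equal to $G/K$ (recall that $G/A\leq G/B$ means $A$ contains a conjugate of $B$). Finally, to identify the structure group, I would observe that the local trivializations $M_{(K)}\cap U_\alpha\simeq V_\alpha\times G/K$ obtained above are $G$-equivariant for the standard $G$-action on the factor $G/K$; hence on overlaps the transition maps are of the form $(b,gK)\mapsto(b,\theta_{\alpha\beta}(b)(gK))$, where each $\theta_{\alpha\beta}(b)$ is a $G$-equivariant automorphism of the homogeneous space $G/K$. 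Every such automorphism has the form $gK\mapsto gaK$ with $a^{-1}Ka=K$, i.e.\ $a\in N(K)$, and $aK$ is determined modulo $K$; thus $\mathrm{Aut}_G(G/K)\simeq N(K)/K$, and $M_{(K)}\to M_{(K)}/G$ is a fiber bundle with fiber $G/K$ and structure group $N(K)/K$ once one verifies, in the routine way, that the $\theta_{\alpha\beta}$ depend smoothly on $b$, which follows from choosing the slices to vary smoothly.

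The bulk of the work, and the only genuine obstacle, is the slice theorem itself together with the bookkeeping of isotropy groups inside a slice; granting these, all three assertions are short. A minor subtlety worth flagging is the appearance of the structure group: over a single tube the bundle $M_{(K)}\to M_{(K)}/G$ is trivial, and $N(K)/K=\mathrm{Aut}_G(G/K)$ enters only through the non-canonical choice of a representative $K$ within its conjugacy class when one passes from one tube to an overlapping one.
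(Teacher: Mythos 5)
The paper does not give a proof of this statement; it simply cites \cite{bredon}, Theorem IV.3.3. Your sketch reproduces the standard slice-theorem argument from that reference — the tube $U\simeq G\times_K S_x$, the observation that the points of type $G/K$ in the tube are exactly $G\times_K S_x^K$ (using the fact that a compact Lie group cannot be conjugate in $G$ to a proper closed subgroup of itself), hence $M_{(K)}\cap U\simeq (G/K)\times S_x^K$, from which local closedness, the closure statement, the bundle structure and the identification $\mathrm{Aut}_G(G/K)\simeq N(K)/K$ all follow — so it is correct and is essentially the same approach as the cited source.
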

\begin{proof}
See \cite{bredon}, Theorem IV.3.3.
\end{proof}
Let now $M_\tau$ denote the union of non-principal orbits of dimension at most $\tau$.
\begin{theorem}
If $\kappa$ is the dimension of a principal orbit, then $\dim M/G =n-\kappa$, and  $M_\tau$  is a closed set of dimension at most $n-\kappa+\tau-1$.
\end{theorem}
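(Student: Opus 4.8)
The plan is to reduce the dimension statement to facts about the local structure of orbit type strata coming from the slice theorem, and then argue stratum-by-stratum. First I would invoke the stratification described in the preceding two theorems: $M$ is the disjoint union of the orbit type manifolds $M_{(K)}$ as $K$ ranges over (conjugacy classes of) isotropy subgroups, and $M_\tau$ is the union of those $M_{(K)}$ with $G/K$ non-principal and $\dim G/K \le \tau$. Since $M/G$ is connected, the principal orbit theorem gives a single principal type $G/H$ with $\kappa = \dim G/H$, and $M_{(H)}$ open dense; the complement $M \setminus M_{(H)}$ is the union of all non-principal strata. The first claim $\dim M/G = n-\kappa$ is essentially immediate: on the open dense stratum $M_{(H)}$, the orbit map is a fiber bundle with fiber $G/H$ of dimension $\kappa$ over $M_{(H)}/G$, so $\dim M_{(H)}/G = \dim M_{(H)} - \kappa = n - \kappa$, and since $M_{(H)}/G$ is dense in $M/G$ (being the image of a dense set under a continuous open surjection) this is the dimension of $M/G$.

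For the bound on $M_\tau$, the key tool is the slice theorem. I would fix a non-principal isotropy subgroup $K$ with $\dim G/K \le \tau$ and a point $x \in M_{(K)}$, and take a linear slice representation: a $G_x = K$-invariant decomposition of $T_xM$ as $T_x\Ocal_x \oplus V$, where $V$ is the slice and $K$ acts linearly on $V$ fixing the origin. A $G$-invariant neighborhood of $\Ocal_x$ is then equivariantly diffeomorphic to the associated bundle $G \times_K V$. Under this identification, $M_{(K)}$ near $\Ocal_x$ corresponds to $G \times_K V^K$, where $V^K$ is the fixed-point subspace of $K$ in $V$; hence locally $\dim M_{(K)} = \dim G/K + \dim V^K$. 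Now $\dim V = n - \dim G/K$, and the crucial point is that $V^K$ is a \emph{proper} subspace of $V$: if $K$ acted trivially on all of $V$, then every point of the slice would have isotropy containing $K$, forcing nearby orbits to have type $\le G/K$, contradicting the density of the principal stratum $M_{(H)}$ unless $K$ is already (conjugate to) $H$ — but $K$ is non-principal, so $K \supsetneq$ a conjugate of $H$ up to conjugacy, and in any case the orbits $G \times_K (V \setminus V^K)$ appearing arbitrarily close to $\Ocal_x$ have strictly smaller isotropy, which is impossible if $V^K = V$. Therefore $\dim V^K \le \dim V - 1 = n - \dim G/K - 1$, giving $\dim M_{(K)} \le \dim G/K + n - \dim G/K - 1 = n - 1$. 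This already shows every non-principal stratum has dimension $\le n-1$, but we want the sharper $n - \kappa + \tau - 1$.

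To get the sharp bound I would be more careful about the orbit map on the stratum. The orbit map $M_{(K)} \to M_{(K)}/G$ is a fiber bundle with fiber $G/K$, so $\dim M_{(K)}/G = \dim M_{(K)} - \dim G/K \le (\text{from the slice computation}) \dim V^K$; and I claim $\dim V^K \le \dim V - \kappa$, equivalently that $K$ acts with orbits of dimension $\ge \kappa$ — no, that is not quite it either; the right statement is that the principal isotropy of the $K$-action on $V$ is conjugate (in $G$) to $H$, so that generic $K$-orbits on $V$ through points near a generic slice point have the same dimension behavior, and more precisely the $G$-orbit through such a point is principal of dimension $\kappa$. Concretely: a generic point $v \in V$ has $G$-isotropy $K_v$ conjugate to $H$, so $\dim G/K_v = \kappa$; on the other hand such $v$ lies in the principal stratum, and tracking dimensions in $G \times_K V$ one finds $\dim (G\times_K V) = \dim G/K + \dim V = n$ while the generic orbit has dimension $\kappa$, so generic $K$-orbits in $V$ have dimension $\dim G/K - \kappa$ off-set appropriately; feeding this back, $\dim V^K \le \dim V - (\dim G/K - \kappa) = n - 2\dim G/K + \kappa$. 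Hmm — I would instead simply cite that $\dim M_{(K)}/G \le \dim M/G - 1 = n-\kappa-1$ when $K$ is non-principal (strict inequality because $M_{(K)}/G$ is a proper locally closed submanifold of the connected space $M/G$ whose open dense part is $M_{(H)}/G$), combined with $\dim M_{(K)} = \dim M_{(K)}/G + \dim G/K \le (n-\kappa-1) + \tau$ using $\dim G/K \le \tau$. Taking the union over the finitely many (by compactness of $M$ and properness of the action, orbit types are locally finite; on compact $M$ there are finitely many) such $K$ with $\dim G/K \le \tau$ gives $\dim M_\tau \le n - \kappa + \tau - 1$; closedness of $M_\tau$ follows from the fact, recorded in the preceding theorem, that $\overline{M_{(K)}}$ consists only of orbits of type $\le G/K$, hence of dimension $\le \dim G/K \le \tau$ and non-principal, so $\overline{M_\tau} = M_\tau$.

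The main obstacle I anticipate is precisely the bookkeeping in the last step: extracting the \emph{sharp} constant $n-\kappa+\tau-1$ rather than the crude $n-1$ requires knowing that passing to a non-principal stratum costs at least one dimension \emph{in the quotient} $M/G$, not merely in $M$. The clean way to see this is the strict-inequality argument for $\dim M_{(K)}/G$ using connectedness of $M/G$ and density of $M_{(H)}/G$, together with the fiber bundle structure of the orbit map over each stratum; making the "proper locally closed submanifold of strictly smaller dimension" claim rigorous — as opposed to merely "not equal" — is where one must invoke that $M_{(H)}/G$ is open dense and connected in the (locally connected, locally compact) space $M/G$, so any other stratum's image is nowhere dense and, being itself a manifold, of dimension $\le \dim M/G - 1$.
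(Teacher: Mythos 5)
The paper does not prove this statement; it simply cites Bredon, Theorem IV.3.8, so there is no ``paper's proof'' to compare against line by line. Judged on its own terms, your reconstruction has the right skeleton (reduce to a per-stratum estimate, add $\dim G/K$ to pass from $M_{(K)}/G$ to $M_{(K)}$, take a finite union, and get closedness from the previous theorem's description of $\overline{M_{(K)}}$), but it has a genuine gap at precisely the step you flag as delicate, and also a smaller gap in the first assertion.

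The central claim you need is $\dim M_{(K)}/G \le n-\kappa-1$ for every non-principal $K$, and you ultimately justify it by saying that $M_{(K)}/G$ is a locally closed topological manifold which is nowhere dense in $M/G$ (being disjoint from the open dense $M_{(H)}/G$), and a nowhere dense manifold-subset must have dimension $\le \dim(M/G)-1$. That last implication is a theorem for subsets of a topological \emph{manifold} --- it is invariance of domain in disguise --- but $M/G$ is in general not a manifold, only a stratified separable metric space, and the implication fails for such spaces without further input. So ``nowhere dense, hence codimension $\ge 1$'' is exactly the piece that cannot be obtained for free from general topology; it is the content that Bredon extracts from the slice theorem via induction on $\dim M$. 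Your earlier slice-theorem attempt was the right road, but you abandoned it after a sign error: the generic $K$-orbit in the slice $V$ has dimension $\kappa - i$ (with $i=\dim G/K$), not $\dim G/K - \kappa$, because the $G$-orbit through a generic slice point has dimension $\kappa = i + (\text{dimension of the generic $K$-orbit in $V$})$. Pushing this through, together with an inductive hypothesis applied to the $K$-action on $V$ (which has strictly smaller dimension when $i>0$, and is a linear action when $i=0$), is how one actually obtains $\dim V^K \le \dim V - (\kappa - i) - 1 = n-\kappa-1$ and hence the stratum estimate.

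A second, smaller gap: for the first assertion $\dim M/G = n-\kappa$, density of $M_{(H)}/G$ together with monotonicity of dimension gives only $\dim M/G \ge n-\kappa$; the reverse inequality $\dim M/G \le n-\kappa$ does not follow from $M_{(H)}/G$ being dense (a dense subset can have strictly smaller dimension), and in fact it is again equivalent to bounding the dimension of the singular part of the quotient, i.e.\ to the very estimate under discussion. In Bredon both halves are proved together by the same induction. In short, the outline is sound and the closedness argument is fine, but the quantitative heart of the theorem --- that passing to a non-principal stratum costs at least one dimension in the quotient --- requires the slice theorem and induction, and cannot be replaced by the ``nowhere dense manifold'' topological argument you propose.
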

\begin{proof}
See \cite{bredon}, Theorem IV.3.8.
\end{proof}
Here the dimension of $M_\tau$ is understood in the sense of general dimension theory.
In what follows, we shall write $\Sing M= M - M_{(H)}=M_\kappa$. Clearly,
\bqn
\Sing M= M_0 \cup (M_1-M_0) \cap (M_2-M_1) \cup  \dots \cup(M_\kappa -M_{\kappa -1}),
\eqn
where $M_i-M_{i-1}$ is precisely the union of non-principal orbits of dimension $i$, and $M_{-1}=\emptyset$, by definition. Note that 
\bqn
M_i-M_{i-1}=\bigcup_{j} M_{(H^i_j)}, \qquad \dim G/H^i_j =i, 
\eqn
is a disjoint union of topological manifolds of possibly different dimensions.
We apply this theory now to the case where $M=\rn$, and $G$ is a compact subgroup of $\O(n)$.
\begin{definition}\label{def2}
Let $G/H_0$ be the  principal orbit type of the action of $G\subset \O(n)$ on $\rn$, and denote by $\kappa$ the dimension of $G/H_0$.
\end{definition}
 Since ${\bf X}$ is open in $\rn$, it has the same principal orbit type than $\rn$.
Now, even if $\Omega_0$ is not a smooth manifold, it can be shown that it has a Whitney stratification into smooth submanifolds, see \cite{ortega-ratiu}, Theorem 8.3.1, which corresponds to the stratification of $T^\ast({\bf{X}})$ and $\rn$ into orbit types. In particular, the strata of $\Omega_0$ are submanifolds of $\rnn$, and $\Omega_0$ admits a principal orbit type, too.
\begin{proposition}\label{def3}
Let $\Reg \Omega_0=\Omega_{0_{(H_1)}}$ be the principal stratum of $\Omega_0$. Then
$\Reg \Omega_0$ is an open dense subset of $\Omega_0$, and a submanifold of ${\bf X} \times\rn$ of codimension $\kappa$. Moreover, for $z \in \Reg \Omega_0$ one has
\bq\label{roch1}
 T_z(\Reg \Omega_0)=(J\g z)^\perp, \qquad
 \mbox{ where }\quad
J=\left(
\begin{array}{cc}
0    & \1_n\\
-\1_n & 0
\end{array}\right).
\eq
Futhermore, $H_1$  is conjugated to $H_0$, and thus
\bqn
\Reg \Omega_0=\mklm{ z \in \Omega_0: G_z \text{ is conjugated to }H_0}.
\eqn
In particular,  if $(x,\xi)\in\Omega_0$,  and if $\Ocal_x$ or $\Ocal_\xi$ are of type $G/H_0$, then $(x,\xi)\in \Reg \Omega_0$.
\end{proposition}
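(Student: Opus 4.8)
The plan is to combine the slice theorem for the linear $G$-action on $\rn$ with a direct computation of the differential of the momentum map $\mathbb{J}$. Throughout write $z=(x,\xi)\in\Omega_0$, identify $T_z({\bf{X}}\times\rn)$ with $\R^{2n}$, and let $\g$ act diagonally on $\R^{2n}$, so that $T_z\Ocal_z=\g z=\mklm{(Ax,A\xi):A\in\g}$. Since every $A\in\g$ is skew-symmetric, $\eklm{Ax,\xi}=-\eklm{x,A\xi}$, so $\Omega_0$ consists of the pairs $(x,\xi)$ with $\xi\perp\g x$, equivalently $x\perp\g\xi$; in particular the fibre of $\Omega_0$ over $x$ is $\mklm x\times N_x\Ocal_x$, where $N_x\Ocal_x=(\g x)^\perp$ is the normal space to the orbit $\Ocal_x$ in $\rn$. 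Recall from the discussion preceding the proposition that $\Omega_0$ carries a Whitney stratification by orbit type whose strata are submanifolds of $\rnn$ and which admits a principal, open and dense stratum; what remains is to identify that stratum and to compute its codimension and tangent space.

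First I would identify the principal isotropy type $H_1$. Let $x_0\in{\bf{X}}$ be of principal type, so that $G_{x_0}$ is conjugate to $H_0$. By the slice theorem, a $G$-invariant neighbourhood of $\Ocal_{x_0}$ in $\rn$ is equivariantly diffeomorphic to $G\times_{G_{x_0}}N_{x_0}\Ocal_{x_0}$, hence the isotropy groups occurring near $\Ocal_{x_0}$ are exactly those of the slice representation of $G_{x_0}$ on $N_{x_0}\Ocal_{x_0}$; since principal points are dense in $\rn$, the principal isotropy group of this slice representation is conjugate to $H_0$, and being a subgroup of $G_{x_0}\cong H_0$ conjugate to $H_0$ it must equal $G_{x_0}$. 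Therefore the slice representation is trivial, $G_{x_0}$ fixes every $\xi_0\in N_{x_0}\Ocal_{x_0}$, and $(x_0,\xi_0)\in\Omega_0$ with $G_{(x_0,\xi_0)}=G_{x_0}\cap G_{\xi_0}=G_{x_0}$ conjugate to $H_0$. The same reasoning at an arbitrary $x$ shows that for every $z=(x,\xi)\in\Omega_0$ one has $G_z=(G_x)_\xi$ with $\xi\in N_x\Ocal_x$, so $G_z$ is an isotropy group of the slice representation of $G_x$ and thus contains a conjugate of its principal isotropy group, which is conjugate to $H_0$. Hence $H_0$ is, up to conjugacy, the smallest isotropy type occurring on $\Omega_0$, i.e.\ $H_1$ is conjugate to $H_0$, and $\Reg\Omega_0=\mklm{z\in\Omega_0:G_z\text{ conjugate to }H_0}$. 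The same observation yields the last assertion: if $\Ocal_x$ (respectively $\Ocal_\xi$) is of type $G/H_0$, then the corresponding slice representation is trivial, so $G_x\subseteq G_\xi$ (respectively $G_\xi\subseteq G_x$), whence $G_{(x,\xi)}$ is conjugate to $H_0$ and $(x,\xi)\in\Reg\Omega_0$.

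Next I would compute the tangent space and the codimension. For $v=(v_1,v_2)\in\R^{2n}$ a direct calculation gives
\bqn
d\mathbb{J}_z(v)=\big(\eklm{A_jv_1,\xi}+\eklm{A_jx,v_2}\big)_{j=1}^d=\big(\eklm{A_jx,v_2}-\eklm{A_j\xi,v_1}\big)_{j=1}^d,
\eqn
using skew-symmetry, so $v\in\ker d\mathbb{J}_z$ exactly when $v$ is Euclidean-orthogonal to each $(-A_j\xi,A_jx)=-J(A_jx,A_j\xi)$; thus $\ker d\mathbb{J}_z=(J\g z)^\perp$. Since $J$ is invertible and $\dim\g z=\dim\Ocal_z$, we get $\dim\ker d\mathbb{J}_z=2n-\dim\Ocal_z$, which on $\Reg\Omega_0$ equals $2n-\kappa$ because there $G_z$ is conjugate to $H_0$ and $\dim\Ocal_z=\dim G/H_0=\kappa$; in particular $\mathbb{J}$ has rank $\kappa$ along $\Reg\Omega_0$. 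On the other hand, over the open dense set ${\bf{X}}_{(H_0)}$ of principal points of ${\bf{X}}$ all orbits have dimension $\kappa$, so $x\mapsto\g x$ is a smooth rank-$\kappa$ subbundle of ${\bf{X}}_{(H_0)}\times\rn$, and $\Omega_0\cap({\bf{X}}_{(H_0)}\times\rn)$ is the total space of its rank-$(n-\kappa)$ orthogonal complement bundle, hence a submanifold of dimension $2n-\kappa$; by the first step this set is contained in, and open in, $\Reg\Omega_0$. Combining this with the inclusion $T_z\Reg\Omega_0\subseteq\ker d\mathbb{J}_z$, valid on all of $\Reg\Omega_0\subseteq\mathbb{J}^{-1}(0)$ and forcing $\dim\Reg\Omega_0\le 2n-\kappa$, we conclude that $\Reg\Omega_0$ has dimension $2n-\kappa$, i.e.\ codimension $\kappa$ in ${\bf{X}}\times\rn$, and consequently $T_z\Reg\Omega_0=\ker d\mathbb{J}_z=(J\g z)^\perp$.

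The main obstacle is the interface between the orbit-type picture and the smooth-manifold picture rather than the linear algebra: knowing that $\Reg\Omega_0$ is genuinely a smooth submanifold, rather than merely a locally closed $G$-invariant subset, and that it is dense in $\Omega_0$ with the expected dimension. This rests on the singular symplectic reduction theory invoked above (equivalently, on the Marle--Guillemin--Sternberg normal form for $\mathbb{J}$ near a point of $\Omega_0$); the explicit vector-bundle model over ${\bf{X}}_{(H_0)}$ only controls the generic part of $\Reg\Omega_0$, and one must appeal to that theory to extend the conclusions to the points of $\Reg\Omega_0$ lying over non-principal orbits of ${\bf{X}}$.
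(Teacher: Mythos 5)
Your proof is correct and takes essentially the same route as the paper's: both rely on the Ortega--Ratiu stratification theory for the smooth manifold structure of $\Reg\Omega_0$, identify the principal isotropy type of $\Omega_0$ via the triviality of the slice representation at a principal point of $\rn$ (which the paper packages as Lemma~\ref{isotrop} together with the observation that $(\rn\times\rn_{(H_0)})\cap\Omega_0$ consists of orbits of type $G/H_0$), and obtain the tangent-space formula by combining the inclusion $T_z\Reg\Omega_0\subseteq\ker d\mathbb{J}_z=(J\g z)^\perp$ with a dimension count. You spell out more detail than the paper — the explicit computation of $d\mathbb{J}_z$, the vector-bundle model of $\Omega_0$ over $\rn_{(H_0)}$, and the extra observation that every $G_z$, $z\in\Omega_0$, contains a conjugate of $H_0$ — but these are elaborations of the same ingredients rather than a different method.
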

To prove the proposition, we need the following
\begin{lemma}\label{isotrop}
Assume that $(x,\xi) \in \Omega_0$.
If  $\Ocal_x$ is of principal orbit type in $\rn$, then $G_x \subset G_\xi$.\\
 If  $\Ocal_\xi$ is of principal orbit type in $\rn$, then $G_\xi \subset G_x$.
\end{lemma}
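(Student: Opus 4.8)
The plan is to translate the membership $(x,\xi)\in\Omega_0$ into a statement about normal spaces of orbits, and then to apply the slice theorem at a point of a principal orbit. First I would observe that, since $G\subset\O(n)$, every $A\in\g$ is a skew-symmetric matrix, so that $\eklm{Ax,\xi}=-\eklm{A\xi,x}$ for all $A\in\g$. Consequently the relations defining $\Omega_0$ are symmetric in $x$ and $\xi$, and
\[
(x,\xi)\in\Omega_0\iff \xi\perp\g x=T_x\Ocal_x\iff x\perp\g\xi=T_\xi\Ocal_\xi;
\]
in other words $\xi$ lies in the Euclidean normal space $N_x\Ocal_x$ to the orbit of $x$ and, simultaneously, $x\in N_\xi\Ocal_\xi$. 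By this symmetry it suffices to prove the first assertion.

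Assume therefore that $\Ocal_x$ is of principal type. Since $G$ acts by Euclidean isometries, the normal exponential map at $x$ is $v\mapsto x+v$, so for small $\epsilon>0$ the set $S_x=(x+N_x\Ocal_x)\cap B_\epsilon(x)$ is a slice at $x$ and, by the slice theorem, a $G$-tube around $\Ocal_x$ is equivariantly modeled on $G\times_{G_x}N_x\Ocal_x$. In particular $G_{x+v}=(G_x)_v\subset G_x$ for every $v\in N_x\Ocal_x$ with $x+v\in S_x$. On the other hand, by the Principal orbit theorem a principal isotropy group is conjugate to a subgroup of every stabilizer, and since $\Ocal_x$ is principal, $G_x$ is itself conjugate to $H_0$; hence $G_x$ is conjugate to a subgroup of $G_{x+v}$. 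Combining $G_{x+v}\subset G_x$ with the fact that a compact Lie group cannot be conjugate to a proper closed subgroup of itself (the identity components agree by a dimension count, and the numbers of connected components agree since the groups are conjugate), I conclude $G_{x+v}=G_x$; in particular $G_x$ fixes $x+v$ for all small $v\in N_x\Ocal_x$.

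To finish I would use that $G_x$ acts linearly on $\R^n$ and fixes $x$: from $g(x+v)=x+v$ for $g\in G_x$ and all $v$ in a neighbourhood of $0$ in $N_x\Ocal_x$ it follows that $gv=v$ there, hence, by linearity, $g$ fixes all of $N_x\Ocal_x$. Since $(x,\xi)\in\Omega_0$ forces $\xi\in N_x\Ocal_x$, this yields $G_x\subset G_\xi$. The second statement then follows by exchanging the roles of $x$ and $\xi$, in view of the symmetry established in the first step.

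The only delicate point, and the main obstacle, is the claim that $G_x$ acts trivially on a slice at a principal point. This is exactly where the Principal orbit theorem and the slice theorem enter: the former to know that $G_x\cong H_0$ is a minimal isotropy group, the latter to know that the stabilizers of nearby points on the slice are honest subgroups of $G_x$ rather than merely subconjugate to it. Everything else is elementary linear algebra with skew-symmetric matrices.
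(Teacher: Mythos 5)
Your proof is correct and takes essentially the same route as the paper's primary argument, which invokes Bredon, Theorem IV.3.2 (the isotropy group of a principal point acts trivially on the linear slice $V_\epsilon\subset N_x\Ocal_x$). The difference is that you re-prove this cited fact: you combine the slice theorem inclusion $G_{x+v}\subset G_x$ with the Principal Orbit Theorem (which gives a conjugate of $G_x$ inside $G_{x+v}$) and the standard observation that a compact Lie group cannot be conjugate to a proper closed subgroup of itself, then upgrade from the $\epsilon$-ball to all of $N_x\Ocal_x$ by linearity of the $G$-action. The paper also sketches a second, more analytic derivation via the submersion $\pi:\rn_{(H_0)}\to\rn_{(H_0)}/G$, showing $(g-\1)\xi\in\g x\cap(\g x)^\perp=\{0\}$; you do not use that, but it is not needed. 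One small remark: your symmetry reduction via skew-symmetry of $A\in\g$ is sound, though strictly speaking the two assertions of the lemma are already symmetric because the defining condition of $\Omega_0$ is symmetric in $x$ and $\xi$; the lemma's two clauses are just the same statement with the roles of $x$ and $\xi$ interchanged, as you note.
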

\begin{proof}
Let $(x,\xi) \in \Omega_0$, that is, $\xi \in N_x \Ocal_x$, where $N_x\Ocal_x$ denotes the normal space to the $G$-orbit $\Ocal_x$ at the point $x$, which is a vector subspace in $\rn$. Assume now that $\Ocal_x$ is of principal type. Denote by $V_\epsilon$ the open $\epsilon$-ball in $N_x\Ocal_x$, and consider the linear tube
\bqn
G \times_{G_x} V_\epsilon \longrightarrow G \cdot V_\epsilon, [g,v] \to gv,
\eqn
around $\Ocal_x$, see \cite{bredon}, Corollary II.5.2. By loc.\  cit., Theorem IV.3.2, $G_x$ acts trivially on $V_\epsilon$, and consequently also on $N_x\Ocal_x$, and the assertion follows. To see this directly, one can also argue as follows. Let $(x,\xi)\in\Omega_0$, so that  $\xi\in (\g x)^\perp$. If $g\in G_x$, then $g\xi \in (\g x)^\perp$. Thus $(g-\1)\xi\in(\g x)^\perp$. We claim that if $\Ocal_x$ is of principal orbit type in $\rn$, then $(g-\1)\xi\in \g x$, which will yield  $(g-\1)\xi=0$, and prove the inclusion  $G_x\subset G_\xi$. Now, by \cite{kawakubo}, Theorem 4.19, the canonical projection $\pi :  \rn_{(H_0)}\twoheadrightarrow \rn_{(H_0)}/G$ is a smooth submersion. Since the preimage of the tangent space of a smooth manifold under a submersion is equal to the tangent space of the preimage of the considered manifold at the given point,  $\ker d_x\pi=\g x$. Moreover, since $M_{(H_0)}$ is an open set of $\rn$, one can differentiate the relation
$$\pi(gy)=\pi(y),$$
with respect to variable $y$ at $x$ to obtain $d_{gx}\pi\circ g=d_x\pi$. Since $gx=x$, $d_x\pi\circ(g-\1)=0$. This proves that the image of $(g-\1)$ is contained in $\ker d_x\pi=\g x$.
\end{proof}
\begin{proof}[Proof of Proposition \ref{def3}]
The first part of the statement follows from the references previously given, while the characterization of the tangent space is obtained by observing that  $\dim \Reg \Omega_0=2n-\kappa$. By the previous lemma,  $(\rn\times \rn_{(H_0)})\cap \Omega_0$ is a non-empty open subset of $\Omega_0$ consisting of orbits of type $G/H_0$. As $\Reg \Omega_0$ is open and dense in $\Omega_0$, it must intersect $(\rn\times \rn_{(H_0)})\cap \Omega_0$, and therefore consist of orbits of type $G/H_0$.
\end{proof}
In what follows, we will denote by $ \Sing \Omega_0$ the complement of $ \Reg \Omega_0$ in $ \Omega_0$.
The next lemma will provide us with a suitable parametrization of $\Reg \Omega_0$.
\begin{lemma}
 \label{lemma:parametr}
The sets $ \{(x,\xi) \in \Reg \Omega_0: x    \in \Sing \rn\}$, $ \{(x,\xi) \in \Reg \Omega_0: \xi    \in \Sing \rn\}$  have measure zero in  $\Reg \Omega_0$ with respect to the induced volume form on $\Reg \Omega_0$.
\end{lemma}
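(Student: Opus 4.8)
The plan is to combine the orbit-type stratification of $\rn$ with the dimension bounds for singular strata (Theorem IV.3.8 of \cite{bredon}) and with the fact that, over a fixed orbit-type stratum, $\Omega_0$ is an honest vector bundle. First I would dispose of the second set: since $G\subset\O(n)$, each $A\in\g$ is skew-symmetric, so $\eklm{Ax,\xi}=-\eklm{x,A\xi}$ and hence $\Omega_0=\mklm{(x,\xi)\in{\bf X}\times\rn:x\perp\g\xi}$ as well; therefore the set $\mklm{(x,\xi)\in\Reg \Omega_0:\xi\in\Sing\rn}$ is treated by the verbatim argument for $\mklm{(x,\xi)\in\Reg \Omega_0:x\in\Sing\rn}$ with the roles of $x$ and $\xi$ interchanged, and it is enough to handle the latter. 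Next I would decompose $\Sing\rn=\bigcup_{(K)}\rn_{(K)}$ as a \emph{finite} union over the non-principal orbit types $(K)$ occurring in $\rn$, each $\rn_{(K)}$ a smooth, locally closed submanifold of $\rn$; writing $\kappa_K=\dim G/K\le\kappa$, the inclusion $\rn_{(K)}\subset\rn_{\kappa_K}$ and \cite{bredon}, Theorem IV.3.8, give
\bqn
\dim\rn_{(K)}\le n-\kappa+\kappa_K-1 .
\eqn
Since a finite union of null sets is null, it suffices to prove that each $S_{(K)}:=\mklm{(x,\xi)\in\Reg \Omega_0:x\in\rn_{(K)}}$ has measure zero in $\Reg \Omega_0$.

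The key point is that on the single stratum $\rn_{(K)}$ the orbit dimension is constant, $\dim\g x=\dim\Ocal_x=\kappa_K$, so $x\mapsto(\g x)^\perp=N_x\Ocal_x$ is a smooth rank $(n-\kappa_K)$ subbundle of the trivial bundle $\rn_{(K)}\times\rn$. Consequently
\bqn
E_{(K)}:=\mklm{(x,\xi)\in({\bf X}\cap\rn_{(K)})\times\rn:\eklm{Ax,\xi}=0 \text{ for all } A\in\g},
\eqn
being an open subset of the total space of that subbundle, is a smooth submanifold of $\rnn$ with
\bqn
\dim E_{(K)}\le\dim\rn_{(K)}+(n-\kappa_K)\le(n-\kappa+\kappa_K-1)+(n-\kappa_K)=2n-\kappa-1 ,
\eqn
and from $\Reg \Omega_0\subset\Omega_0$ one gets exactly $S_{(K)}=\Reg \Omega_0\cap E_{(K)}$. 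I would then conclude with the standard fact that if $N_1,N_2$ are smooth submanifolds of $\R^M$ with $\dim N_2<\dim N_1$, then $N_1\cap N_2$ has measure zero in $N_1$ for the induced volume: working in a chart $\phi\colon U\to\R^{\dim N_1}$ of $N_1$, one extends $\phi$ to a smooth submersion of a neighbourhood of $U$ in $\R^M$, covers $N_2$ by countably many coordinate patches, and observes that on each patch $\phi$ restricts to a smooth map from an open subset of $\R^{\dim N_2}$ into $\R^{\dim N_1}$, whose image is Lebesgue-null because $\dim N_2<\dim N_1$; hence $\phi(N_1\cap N_2\cap U)$ is null. Applying this with $N_1=\Reg \Omega_0$ (of dimension $2n-\kappa$, by Proposition \ref{def3}) and $N_2=E_{(K)}$ shows $S_{(K)}$ is null, and the lemma follows.

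The main obstacle — indeed the only non-formal ingredient — is the dimension estimate $\dim\rn_{(K)}\le n-\kappa+\kappa_K-1$ of \cite{bredon}: it must be sharp enough to beat the \emph{increase} $\kappa-\kappa_K$ in the fibre dimension of $\Omega_0$ as one passes from a principal point to the singular stratum $\rn_{(K)}$, and it does so by exactly one unit of dimension, which is precisely what forces $E_{(K)}$ to have positive codimension in $\Reg \Omega_0$. Everything else is routine bookkeeping: the finiteness of the set of orbit types, the smoothness of the subbundle $x\mapsto(\g x)^\perp$ on a fixed stratum (constancy of orbit dimension there), and the elementary null-set statement above.
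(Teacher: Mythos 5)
Your proposal is correct and follows essentially the same route as the paper: decompose $\Sing\rn$ into the finitely many non-principal orbit-type strata, bound $\dim\rn_{(K)}$ via Bredon's Theorem IV.3.8, add the fibre dimension $n-\kappa_K$ of the normal bundle over each stratum, and conclude that each piece has dimension at most $2n-\kappa-1<\dim\Reg\Omega_0$. The only additions you make — the skew-symmetry remark reducing the $\xi$-case to the $x$-case, and the explicit chart-based null-set argument for a lower-dimensional submanifold — are details the paper leaves implicit.
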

\begin{proof}
We shall show that $N=\mklm{(x,\xi) \in \Omega_0: x \in \Sing \rn}$ is a closed set in $\Omega_0$ of dimension at most $2n-\kappa-1$. Indeed, with $M=\rn$, and  notations as above,
\begin{align*}
N&= \bigcup_{i=0}^\kappa \mklm {(x,\xi) \in \Omega_0: x \in M_i-M_{i-1}}= \bigcup_{i=0}^\kappa\bigcup _{j(i)} \mklm {(x,\xi) \in \R^{2n}: x \in \rn_{(H^i_j)}, \xi \in N_x \Ocal_x},
\end{align*}
where the union over $j(i)$ ranges over all non-principal orbit types $G/H^i_j$ with $\dim G/H^i_j=i$. By the previous theorem, $\dim \rn_{(H^i_j)} \leq \dim M_i \leq n-\kappa +i-1$,
and in addition, $\dim N_x\Ocal_x =n-i$ for all $x \in  \rn_{(H^i_j)}$. Consequently, $ \mklm {(x,\xi) \in \R^{2n}: x \in \rn_{(H^i_j)}, \xi \in N_x \Ocal_x}$ is a subset of $\Omega_0$ of dimension at most $2n-\kappa-1$. Since for orthogonal group actions there are only finitely many orbit types, the union over $j(i)$ is finite, and the assertion of the lemma follows. 
\end{proof}
Finally, for future reference we note the following
\begin{lemma}\label{critset}
  The set
    \bqn
    \Reg {\mathcal{C}}=\mklm{(z,k)\in  \Reg \Omega_0  \times G: kz=z}
    \eqn
    is a smooth submanifold of dimension $2n+d-2\kappa$, and for $(z,k)\in \Reg {\mathcal{C}}$,
    \bqn
   T_{(z,k)} \Reg {\mathcal{C}}=\mklm{(\alpha,Ak):\alpha\in T_z\Reg \Omega_0, A\in\mathcal{G} \mbox{ and } (\1-k)\alpha+Az=0 }.
   \eqn
\end{lemma}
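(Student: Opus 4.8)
The plan is to realise $\Reg\mathcal C$, locally, as a twisted product obtained from the slice theorem for the $G$-action on $\Reg\Omega_0$, and then to read off its dimension and tangent space from that model. The crucial input is Proposition \ref{def3}: $\Reg\Omega_0$ is a $G$-invariant submanifold of ${\bf X}\times\rn$ of dimension $2n-\kappa$ on which every isotropy group is conjugate to $H_0$, so $G$ acts on $\Reg\Omega_0$ with a single orbit type.

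First I would fix $z_0\in\Reg\Omega_0$ and set $H=G_{z_0}$, so that $\dim H=\dim H_0=d-\kappa$. By the slice theorem (see \cite{bredon}, Section II.5) there are a $G$-invariant open neighbourhood $U$ of the orbit $\Ocal_{z_0}$ in $\Reg\Omega_0$ and a $G$-equivariant diffeomorphism from $G\times_H S$ onto $U$, where the slice $S$ is a linear $H$-space of dimension $(2n-\kappa)-\kappa=2n-2\kappa$. Since $\Reg\Omega_0$ has a single orbit type, the argument used in the proof of Lemma \ref{isotrop} (\cite{bredon}, Theorem IV.3.2) shows that $H$ acts trivially on $S$; hence $U\cong(G/H)\times S$ as $G$-spaces, $G$ acting by left translations on $G/H$ and trivially on $S$. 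In these coordinates $k\cdot(gH,s)=(kgH,s)$, so $k$ fixes $(gH,s)$ exactly when $g^{-1}kg\in H$, and therefore
\bqn
\Reg\mathcal C\cap(U\times G)=\mklm{\,((gH,s),k)\ :\ s\in S,\ k\in gHg^{-1}\,}.
\eqn
Choosing a smooth local section $\tau$ of the principal $H$-bundle $G\to G/H$ over a neighbourhood $W$ of the base point, each $k\in\tau(w)H\tau(w)^{-1}$ is uniquely $\tau(w)h\tau(w)^{-1}$ with $h\in H$, so $(w,s,h)\mapsto\bigl((w,s),\tau(w)h\tau(w)^{-1}\bigr)$ is a smooth embedding of $W\times S\times H$ onto an open subset of $\Reg\mathcal C$; as $z_0$ varies these charts cover $\Reg\mathcal C$. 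This proves $\Reg\mathcal C$ is a submanifold of dimension $\dim(G/H)+\dim S+\dim H=\kappa+(2n-2\kappa)+(d-\kappa)=2n+d-2\kappa$.

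For the tangent space I would note that $\Reg\mathcal C$ lies in the zero set of $F\colon\Reg\Omega_0\times G\to\rnn$, $F(z,k)=kz-z$; differentiating $kz=z$ along a curve in $\Reg\mathcal C$ and using $kz=z$ once more gives the relation $(\1-k)\alpha+Az=0$ in the statement, so $T_{(z,k)}\Reg\mathcal C$ is contained in the asserted space. Conversely, writing $T_z\Reg\Omega_0=\g z\oplus S_z$ (orbit tangent plus $G_z$-invariant slice complement) and using that $k\in G_z$ preserves this splitting and acts trivially on $S_z$, one obtains $(\1-k)\bigl(T_z\Reg\Omega_0\bigr)\subset\g z$; consequently the map $(\alpha,A)\mapsto(\1-k)\alpha+Az$ from $T_z\Reg\Omega_0\times\g$ to $\rnn$ has image exactly $\g z$, of dimension $\kappa$, hence kernel of dimension $(2n-\kappa)+d-\kappa=2n+d-2\kappa$. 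Under the identification $A\leftrightarrow Ak$ of $\g$ with $T_kG$ this kernel is precisely the asserted space, which thus has the same dimension as $\Reg\mathcal C$; combined with the inclusion above, this gives equality.

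The step needing the most care is the triviality of the slice representation along the principal stratum: it is exactly what turns the abstract tube $G\times_H S$ into the honest product $(G/H)\times S$, and hence the locus of fixed pairs into a manifold — without it the isotropy of the slice variable would vary and $\Reg\mathcal C$ could fail to be smooth. The remaining steps are bookkeeping with dimensions.
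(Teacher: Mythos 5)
Your proof is correct. The paper does not prove this lemma itself but simply cites \cite{cassanas}, Lemma 3.2, so your slice-theorem argument is an independent, self-contained verification of the claim. The structural fact you exploit---that along the principal stratum the slice representation of the isotropy group is trivial, so the tube $G\times_H S$ collapses to the honest product $(G/H)\times S$ and $(\1-k)(T_z\Reg\Omega_0)\subset\g z$ whenever $k\in G_z$---is exactly the fact the present paper itself derives later, in the proof of Lemma \ref{formulaint}, by differentiating the orbit-map identity $\pi(kz)=\pi(z)$. Your route via Bredon's tube theorem delivers the manifold structure and the tangent-space inclusion in one stroke; the paper's route through $\pi$ is more elementary but requires the smoothness of $\Reg\Omega_0$ and the submersion property of $\pi$ as separate inputs. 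The dimension bookkeeping and the observation that the linear map $(\alpha,A)\mapsto(\1-k)\alpha+Az$ on $T_z\Reg\Omega_0\times\g$ has image exactly $\g z$ (hence kernel of dimension $(2n-\kappa)+d-\kappa=2n+d-2\kappa$) are both sound. One small algebraic remark: differentiating $k(t)z(t)=z(t)$ along a curve with $\dot{k}(0)=Ak$ and then using $kz=z$ gives $Az+(k-\1)\alpha=0$, i.e.\ $(\1-k)\alpha-Az=0$ rather than $(\1-k)\alpha+Az=0$; since $A$ ranges over all of $\g$ this describes the same subspace, so the stated formula (inherited from \cite{cassanas}) is not wrong as a set, but if you assert that the differentiation produces the relation literally as written you should make the signs match.
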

\begin{proof}
See \cite{cassanas}, Lemma 3.2.
\end{proof}
 In particular note that if $(z,k)$ belongs to  $\Sing {\mathcal{C}}$, the complement of $ \Reg {\mathcal{C}}$ in $\mathcal{C}$, then $z$ must necessarily lie in $\Sing \Omega_0$.
 After these preliminary remarks, we are now ready for the analysis  of $I(\mu)$.

\section{Phase analysis  and partial desingularization}

We shall now start  with the computation of an asymptotic formula for $I(\mu)$ via the generalized stationary phase theorem by partially resolving the singularities of the critical set 
\bqn
\mathcal C=\mklm{ (x,\xi,k) \in {\bf X}\times\mathbb{R}^{n}\times G: \psi' (x,\xi,k) =0}
\eqn
of the phase function $\psi(x,\xi,k)=\eklm{x-kx,\xi}$. Such a resolution will be given by a  proper $\R$-analytic map $\zeta:\tilde M \rightarrow M$ of some smooth manifold $\tilde M$ onto $M=\rn$, inducing a  transformation $\zeta:\tilde {\mathcal{C}} \rightarrow {\mathcal{C}}$ such that
$\tilde {\mathcal{C}}$ is a partially desingularized subvariety of $\tilde M$, and $\zeta$ induces an isomorphism of real analytic manifolds $\zeta^{-1}(\Reg {\mathcal{C}})\rightarrow {\Reg \mathcal{C}}$, where $\Reg {\mathcal{C}}$ denotes
the set of nonsingular points of ${\mathcal{C}}$.
By performing such a resolution we will be led  to a new phase function, whose critical set is no longer a singular variety. As before, denote by $\rn_{(H_0)}$ the union of all orbits of principal type $G/H_0$ in $\rn$.
We will  construct an explicit resolution of $\tilde {\mathcal{C}}$ by constructing a resolution of $\Omega_0$ first, 
under the following
\begin{assumption}\label{assump1}
  The set $\Sing \rn=\rn\setminus \rn_{(H_0)}$ is included in a strict vector subspace $F$ of $\rn$ of dimension $r<n$.
\end{assumption}
\begin{remark} Particular cases of Assumption \ref{assump1}  are

{\it i) Transitive actions on the sphere}. For any   compact subgroup of $O(n)$ acting transitively on the $(n-1)$-dimensional sphere,   $\Sing \rn=\{0\}$. The list of compact, connected Lie groups acting transitively and effectively on spheres has been found by Montgomery and Samelson \cite{montgomery-samelson}. It includes  all the holonomy groups of a simply-connected Riemannian manifold with an irreducible, nonsymmetric metric appearing in Berger's list, and in particular,  the group $\SO(n)$ acting on $\rn$.

 {\it ii) Cylindrical actions}. For the group of rotations around an axis in $\R^n$, $\R^n_{\textrm{sing}}$ is equal to the rotation axis. More generally, any group conjugated to $G\times \{ \1_q\}$  in $\O(n)$, where $G$ is a compact subgroup of $\O(p)$ acting transitively on the $(p-1)$-dimensional sphere, and $p+q=n$, is included.
\end{remark}
We begin by considering the blowing-up of $M=\R^{2n}$ with center $C=\mklm{\xi_1=\dots= \xi_n=0}$ given by
 \bqn
 \tilde M= \{ (x,\xi,[\mu]) \in M \times \R\mathbb{P}^{n-1}: \xi_i \mu_j=\xi_j \mu_i, \quad i <j\},
 \eqn
 together with  the  monoidal transformation
\bqn
\zeta_M:\tilde M \longrightarrow M, \quad (x,\xi,[\mu]) \mapsto (R_0 x,R_0 \xi),
\eqn
with  $R_0 \in \O(n)$ such that
$$R_0(\R^r\times \{0\})=F.$$

Covering $\tilde M$ with the charts $\tilde M_j=\tilde M\cap (M\times U_j)$, where $U_j=\mklm{[\mu] \in \mathbb{RP}^{n-1}:\mu_j \not=0}$, one obtains in $\tilde M_j$ the local coordinates
\bqn
x_i, \quad   i=1,\dots, n, \qquad \eta_k=\frac{\mu_k}{\mu_j}, \quad \eta_j=\xi_j, \quad k=1, \stackrel{\wedge} \dots ,n,
\eqn
 and we write
 \bqn
 \tilde \phi_j: \R^{2n} \longrightarrow \tilde M_j, \quad  (x, \eta) \mapsto (x,\eta_j(\eta_1, \dots,1,\dots, \eta_n), [\eta_1:\dots:1:\dots:\eta_n]).
 \eqn
 Now, the total transform of $\Omega_0$ is given by $\tilde \Omega_0^{\mathrm{tot}}=\zeta_M^{-1}(\Omega_0)$, and contains the exceptional divisor $E=\zeta_M^{-1}(C)$,  while the strict transform of $\Omega_0$ in the $j$-th chart is   locally given by
 \bqn
 \tilde \Omega_0^{\mathrm{st}}=\mklm{(x,\eta) \in \R^{2n}: \eklm{AR_0x,R_0(\eta_1,\dots, 1,\dots, \eta_n)}=0, \, A \in \g  }.
 \eqn
For $j=r+1,\dots, n$, it is a non-singular variety, since in this case the condition $(x,\eta) \in \tilde \Omega_0^{\mathrm{st}}$ implies that 
 $(R_0x,R_0(\eta_1,\dots,1,\dots,  \eta_n))\in \Reg \Omega_0$ by Assumption 1, and Proposition  \ref{def3}.
By functoriality, the $G$-action on $M$ lifts to a $G$-action on $\tilde M$.
 To  construct a partial resolution for  $\mathcal C$, we put $N=M\times G$, $\tilde N=\tilde M \times G$, and $\zeta_{N}:\tilde N \rightarrow N, (x,\xi,[\mu],k) \mapsto (x,\xi,k)$. Using the coordinates introduced above, we see that the strict transform of $\mathcal C$ with respect to $\zeta_N$ is locally given by
 $$\tilde{\mathcal{C}}^{\mathrm{st}}=\{(x,\eta,k) \in \tilde \Omega_0^{\mathrm{st}} \times G: (k-\1)R_0x=0, \, (k -\1)R_0(\eta_1, \dots, 1,\dots, \eta_n)=0\}.$$
For $j=r+1,\dots,n$, $G$ acts on $\tilde \Omega_0^{\mathrm{st}}$ only with one orbit type, so that in this case $\tilde {\mathcal C}^{\mathrm{st}}$ must be non-singular. Let now $I(\mu)$ be defined as in \eqref{eq:15b}.
 Since each chart $\tilde M_j$ completely covers $\tilde M$ except for a set of measure zero, one has
\begin{align}
\label{eq:16}
I(\mu) &=  \int _G \int_{\R^{2n}} e^{i \tilde \psi_j (x,\eta,k) /\mu}  \tilde\sigma_{\mu,j}(x,\eta)  \overline \chi (k) |\eta_j^{n-1}| \, dx\dbar\eta \, dk
\end{align}
for arbitrary $j$, where we put $\tilde \psi_j(x,\eta,k)=\psi ((\zeta_M\circ \tilde \phi_j)(x,\eta),k)$,  $\tilde \sigma_{\mu,j} (x,\eta)=(\sigma_\mu \circ \zeta_M\circ \tilde \phi_j)(x,\eta)$, and took into account the fact that  $|\det D(\zeta_M \circ \tilde \phi_j) (x,\eta)|=|\eta_j^{n-1}|$.
In what follows, we shall  work in the chart $j=n$, and denote  $\tilde  \psi_n$ and $\tilde \sigma_{\mu,n}$ simply by $\tilde \psi$ and $\tilde \sigma_\mu$, respectively. 
Let us now introduce the new parameter\footnote{The idea of introducing the new parameter $\nu$ was taken from \cite{helffer-elhouakmi}, Section 6. Nevertheless, Helffer and El-Houakmi work in spherical variables, which leads to secondary critical points that were not explicitly taken into account in their work. Our approach does not lead to secondary critical points.}
\bqn
\nu=\mu/\eta_n.
\eqn
Defining the new phase function\footnote{The subscript 'wk' stands for ``weak transform''.}
\bqn
 \phr:\rn\times \R^{n-1}\times G\to \R, \quad  \phr(x,\eta',k)=\eklm{(\1-k)R_0x;R_0(\eta',1))}_{\rn}, 
\eqn
and taking into account \eqref{eq:suppsigma}, we  write
 \bqn
 I(\mu) =\frac 1 {(2\pi)^n} \int_{-E_0}^{E_0} I_{\eta_n}(\mu/\eta_n)|\eta_n|^{n-1}  \d\eta_n,
 \eqn
 where $E_0$ is some suitable positive number, and 
 \bq
 \label{eq:17}
 I_{\eta_n} (\nu) =  \int _G \int_{\R^{2n-1}} e^{ \frac{i}{\nu} \phr(x,\eta',k)} \tilde\sigma_{\nu\eta_n}(x,\eta',\eta_n)  \overline \chi (k)  \, dx\, d\eta' \, dk.
 \eq
The significance of the new phase function $\psi_{wk}$ stems from the following proposition. It will enable us to derive an asymptotic formula for $I_{\eta_n} (\nu)$ as $\nu$ goes to zero by using the stationary phase theorem in the region where $\eta_n$ is not small. Note that, in particular,  Theorem \ref{thm:SP} will allow us to handle the dependence of the amplitude $\tilde \sigma_\mu$ in variable $\mu=\nu\eta_n$.
  \begin{proposition}\label{phaseanalysis}
Let $\mathcal{C}_{\phr}=\{\phr'=0\}$ denote the critical set of $\phr$. Then
$$ \mathcal{C}_{\phr}=\{(x,\eta',k)\in \rn\times \R^{n-1}\times G : (R_0x,R_0(\eta',1),k)\in \Reg \mathcal{C} \}.$$
It is a smooth submanifold of $\rn\times \R^{n-1}\times G$ of codimension $2\kappa$. Moreover, at each point $(x,\eta',k) $ of $\mathcal{C}_{\phr}$, the transversal Hessian of $\phr$ defines a non-degenerate quadratic form on the normal space $N_{(x,\eta',k)}\mathcal{C}_{\phr}$ of $\mathcal{C}_{\phr}$ in $\rn\times \R^{n-1}\times G$.
 \end{proposition}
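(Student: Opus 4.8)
The strategy is to transfer the known structure of $\Reg\mathcal C$ from Lemma~\ref{critset} through the blow-up map and the reparametrization $\nu=\mu/\eta_n$, and to read off both the description of $\mathcal C_{\phr}$ and the non-degeneracy of its transversal Hessian from the corresponding facts about $\mathcal C$. First I would compute $\phr'$ explicitly. Since $\phr(x,\eta',k)=\eklm{(\1-k)R_0x,\,R_0(\eta',1)}$, writing $y=R_0x$ and $\zeta=R_0(\eta',1)$, the $x$-derivative gives the condition $\,^t\!R_0(\1-k^{-1})\zeta=0$, i.e.\ $(\1-k)\zeta$ lies in the orthogonal complement of the image of $\,^t\!R_0$ composed appropriately; the $\eta'$-derivative gives that the first $n-1$ components of $\,^t\!R_0(\1-k)y$ vanish; and the $k$-derivative (along $Ak$, $A\in\g$) gives $\eklm{AR_0x,R_0(\eta',1)}=0$ for all $A\in\g$. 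The point is that these are precisely the equations cutting out $\Reg\mathcal C$ after pulling back along $\zeta_M\circ\tilde\phi_n$: the $k$-equation says $(R_0x,R_0(\eta',1))\in\Omega_0$, while the $x$- and $\eta'$-equations together are equivalent to $k$ fixing $(R_0x,R_0(\eta',1))$ — here one uses that in the chart $j=n$ the last coordinate $\eta_n$ has been normalized to $1$, so the missing $n$-th component of the $\eta'$-equation is automatically supplied by homogeneity together with the $k$-invariance. Moreover, by Assumption~\ref{assump1} and Proposition~\ref{def3}, in this chart every point of $\tilde\Omega_0^{\mathrm{st}}$ maps into $\Reg\Omega_0$, so the resulting critical set is exactly $\{(x,\eta',k):(R_0x,R_0(\eta',1),k)\in\Reg\mathcal C\}$, giving the first assertion.

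Next I would establish smoothness and the codimension count. Since $\zeta_M\circ\tilde\phi_n$ restricted to the $n$-th chart is an isomorphism of analytic manifolds onto its image (away from the exceptional divisor, which does not meet this chart in the relevant region because $\eta_n\neq0$ there) and carries $\mathcal C_{\phr}\times\{\eta_n=1\}$-type data to $\Reg\mathcal C$, the set $\mathcal C_{\phr}$ is diffeomorphic to an open subset of a manifold modelled on $\Reg\mathcal C$ with the $\eta_n$-direction removed. By Lemma~\ref{critset}, $\dim\Reg\mathcal C=2n+d-2\kappa$ sitting inside $\Reg\Omega_0\times G$, which is cut out inside $\R^{2n}\times G$; removing one homogeneity direction and passing to $\rn\times\R^{n-1}\times G$ of dimension $2n-1+d$ leaves codimension $2\kappa$. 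I would make this precise by directly verifying that the differential of the map $(x,\eta',k)\mapsto(\phr'_x,\phr'_{\eta'},\phr'_k)$ has constant rank $2\kappa$ along $\mathcal C_{\phr}$, using the explicit tangent space formula from Lemma~\ref{critset} for $T\Reg\mathcal C$ and the fact that $T_z\Reg\Omega_0=(J\g z)^\perp$ from Proposition~\ref{def3}: the kernel of that differential should come out to be exactly the preimage of $T\Reg\mathcal C$.

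The heart of the matter is the non-degeneracy of the transversal Hessian $\phr''|_{N\mathcal C_{\phr}}$. My plan is to compute the Hessian of $\phr$ at a critical point in block form with respect to the splitting of variables $(x,\eta',k)$, and to identify its kernel. A standard fact (used in the manifold stationary phase theorem, Theorem~\ref{thm:SP}) is that the kernel of the full Hessian of a phase function at a critical point always contains the tangent space to the critical manifold; non-degeneracy of the transversal Hessian is the statement that it contains \emph{nothing more}. So I would show: if $(\delta x,\delta\eta',Ak)\in\ker\phr''(x,\eta',k)$, then $(\delta x,\delta\eta',Ak)$ is tangent to $\mathcal C_{\phr}$. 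Writing out $\phr''$, the mixed $x$–$\eta'$ block is essentially $\,^t\!R_0(\1-k)R_0$ (truncated), the $x$–$k$ and $\eta'$–$k$ blocks involve $A$ applied to $R_0x$, $R_0(\eta',1)$, and the $k$–$k$ block involves second derivatives of the $G$-action, which vanish after imposing the first-order critical equations (a curvature-type cancellation, exactly as in the proof of Lemma~\ref{critset} and in \cite{cassanas}). The kernel equations then reduce to: $(\1-k)$ applied to $(\delta x,\delta\eta')$ plus $A$ applied to $(x,\eta',1)$ lies in the appropriate normal directions, which — after using $T_z\Reg\Omega_0=(J\g z)^\perp$ — are precisely the tangency conditions $(\1-k)\alpha+Az=0$ with $\alpha\in T_z\Reg\Omega_0$ of Lemma~\ref{critset}. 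I expect \textbf{this kernel computation to be the main obstacle}: one must carefully separate the genuinely transversal directions from the $\eta_n$-homogeneity direction that was factored out in passing to $\phr$, and check that no spurious degeneracy is introduced by the blow-up or by the restriction to the chart $j=n$. The cleanest route is probably to prove directly that $\phr''$ descends from $\psi''$ (the Hessian of the original phase on $\mathcal C$) under the submersion-type relation between $\mathcal C_{\phr}$ and $\Reg\mathcal C$, so that non-degeneracy transversal to $\mathcal C_{\phr}$ follows from non-degeneracy of $\psi''$ transversal to $\Reg\mathcal C$, the latter being implicitly contained in \cite{cassanas}. Once the kernel is pinned down to $T\mathcal C_{\phr}$, the transversal Hessian is automatically non-degenerate, completing the proof.
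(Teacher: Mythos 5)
Your overall strategy matches the paper's: compute $\phr'$ in the coordinates $(x,\eta',k)$, identify $\mathcal C_{\phr}$ with a slice of $\Reg\mathcal C$, and then show that $\ker\phr''=T\mathcal C_{\phr}$ at each critical point, which simultaneously gives the codimension and the non-degeneracy. Your reduction of the critical equations is also essentially what the paper does, though the paper uses a cleaner route: from $\partial_x\phr=0$ ($k$ fixes $R_0(\eta',1)$) and $\partial_k\phr=0$ ($(R_0x,R_0(\eta',1))\in\Omega_0$, hence in $\Reg\Omega_0$ by Assumption~\ref{assump1} and Proposition~\ref{def3}) one deduces, via Lemma~\ref{isotrop} on principal isotropy groups, that $kR_0x=R_0x$, which makes the $\eta'$-equations \emph{redundant}. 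Your ``homogeneity'' phrasing for recovering the $n$-th component is workable (it amounts to noting that $\phr$ itself vanishes on its critical set, so the $\eta'$-equations force the full vector $(\1-k)R_0x$ to vanish), but you should make that explicit rather than gesture at it.

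There are, however, two concrete points that need repair. First, your description of the Hessian block structure is wrong: you assert that the $k$--$k$ block ``vanish[es] after imposing the first-order critical equations (a curvature-type cancellation).'' In fact the paper computes the $(3,3)$-block of $\operatorname{Hess}\phr$ at a critical point to be $-\eklm{A_iR_0x,\,A_jR_0(\eta',1)}$, which does not vanish on $\mathcal C_{\phr}$; it is precisely the ingredient that produces the map $\Lambda=\bigl((k-\1)(k^{-1}-\1)+f\bigr)_{|\g z}$ whose determinant appears in the leading-term computation (Lemma~\ref{formulaint}). A proof built on that false vanishing would not close. Second, smoothness of $\mathcal C_{\phr}$ is not automatic from the blow-up being a local diffeomorphism off the exceptional divisor: one must verify that $\Reg\mathcal C$ meets the affine slice $(R_0\times R_0)\{\eta_n=1\}$ transversally. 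The paper does this using the tangent-space formula of Lemma~\ref{critset} together with the fact (proved later, in the course of Lemma~\ref{formulaint}) that $kz=z$ forces $(k-\1)\alpha\in\g z$ for all $\alpha\in T_z\Reg\Omega_0$; your constant-rank alternative would require carrying out essentially the same Hessian/kernel computation, so it does not save work. Finally, your ``cleanest route'' --- pulling non-degeneracy back from $\psi''$ along $\Reg\mathcal C$ --- is plausible because $\tilde\psi=\eta_n\phr$ makes $\operatorname{Hess}\tilde\psi$ block-diagonal with a trivial $\eta_n$-direction at critical points and $\zeta_M\circ\tilde\phi_n$ is a local diffeomorphism there, but it is \emph{not} the route the paper takes, and it presupposes a non-degeneracy statement for $\psi''$ on $\Reg\mathcal C$ that the paper itself only establishes (via \cite{cassanas}) in the course of the later determinant computation; invoking it here would be circular within the paper's own logical order.
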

 \begin{remark}
 Note that if $\psi_{wk}$ is regarded as a function on $\tilde N$, that is, as a function of  $x,\eta$, and $k$, the proposition implies that its critical set is given by the strict transform $\tilde{\mathcal{C}} ^{\mathrm{st}}$ of $\mathcal C$; moreover, its transversal Hessian does not degenerate along $\tilde{\mathcal{C}} ^{\mathrm{st}}$.
 \end{remark}
  \begin{proof}[Proof of Proposition \ref{phaseanalysis}]
 We shall denote by $(e_1,\dots,e_n)$ the canonical basis of $\rn$.\
 With respect to the coordinates $(x,\eta,k)$ one computes
$$\left\{
\begin{array}{l}
\partial_x\phr(x,\eta',k)=0\iff (\1-k^{-1})R_0(\eta',1)=0.\\
\partial_k\phr(x,\eta',k)=0\iff \eklm{AR_0x,R_0(\eta',1)}=0, \; \forall A\in\g.\\
\partial_{\eta'}\phr(x,\eta',k)=0 \iff \eklm{ (\1-k)R_0x,R_0e_i}=0, \; i=1,\dots,n-1.
\end{array}
\right.$$
The second equation is equivalent to the fact that $(R_0x,R_0(\eta',1))\in \Omega_0$. By Assumption \ref{assump1}, $R_0(
\eta',1)\notin F$, so that using  Proposition \ref{def3}, we obtain that our second equation is equivalent to the fact that $(R_0x,R_0(\eta',1))\in \Reg \Omega_0$.  Using Lemma \ref{isotrop}, the two first equations imply that $kx=x$, and therefore imply the third one. Consequently, we obtain
$$ \mathcal{C}_{\phr}=\{(x,\eta',k)\in \rn\times \R^{n-1}\times G : (k-\1)R_0x=0,\; (k-\1)R_0(\eta',1)=0, \; (R_0x,R_0(\eta',1))\in \Reg \Omega_0\}.$$
Next, we see immediately that $ \mathcal{C}_{\phr}$ is diffeomorphic to the intersection of  $\Reg {\mathcal{C}}$ and $(R_0\times R_0)(\{\eta_n=1\})$. Thus, in order to show  that $\mathcal{C}_{\phr}$ is a smooth manifold, we  have to prove that these two sets are transversal. Let $(z,k)=(R_0x,R_0\eta,k)\in\Reg {\mathcal{C}}\cap(R_0\times R_0)(\{\eta_n=1\}) $. We need to prove that $T_{(z,k)}\Reg {\mathcal{C}}\not\subset (R_0\times R_0)(\{\eta_n=0\})$. For this purpose, consider $\alpha=(-R_0x,R_0\eta)$. This is an element of $T_{z}\Omega_0=J\g z^\perp$ which satisfies $(R_0\times R_0)^{-1}(\alpha)\not\in\{\eta_n= 0\}$. Moreover, we shall see later in  Lemma \ref{formulaint} that $kz=z$ implies $(k-\1)\alpha \in \g z$  for  all $ \alpha \in T_z\Omega_0$. Consequently,  there exists  an $A\in \g$ such that $(\alpha,Ak)\in T_{(z,k)}\Reg {\mathcal{C}}\setminus(R_0\times R_0)(\{\eta_n=0\})$. The dimension of $ \mathcal{C}_{\phr}$ follows from Lemma \ref{critset}, and the tangent space at $(x,\eta',k)$ is  therefore given by
\bq\label{eqtangent}
T_{(x,\eta',k)}\mathcal{C}_{\phr}=\{(q,p',Ak)\in \rn\times\R^{n-1}\times \g k:(R_0(q),R_0(p',0),Ak)\in T_{(R_0x,R_0(\eta',1),k)}(\Reg \mathcal{C})\}.
\eq
To compute the Hessian of $\phr$ at a point $(x_0,\eta_0',k_0)\in \mathcal{C}_{\phr}$, we fix a basis $(A_1,\dots,A_d)$ of $\g$, and use the chart $\alpha:\R^{2n-1}\times\R^d \to \R^{2n-1}\times G$  defined by
$$\alpha(x,\eta',s)=\left(x,\eta', \exp\left(\sum_{i=1}^d s_i A_i\right)k_0\right).$$
With respect to these coordinates, the Hessian of $\phr$ is given by
$$\mbox{Hess }\phr(x_0,\eta_0',k_0)=\left( \frac{\partial^2(\phr\circ \alpha)}{\partial X_i\partial X_j} (x_0,\eta_0,0) \right)_{1\leq i,j\leq 2n+d-1}$$
which is a square matrix of size $2n+d-1$.
Before entering the computations, we recall that by  (3.17) of \cite{cassanas} we have
\bq\label{roch2bis}
 \eklm{JAz,Bz}_{\R^{2n}}=0 \quad \forall z\in \Omega_0, \quad\forall A, B \in\g,
\eq
which is equivalent to
\bq\label{roch2}
\eklm{Ax,B\xi}_{\R^{n}}=\eklm{Bx,A\xi}_{\R^{n}} \quad \forall (x,\xi)\in \Omega_0,\quad \forall A, B \in\g.
\eq
Using these identities, we obtain for all $(x,\eta',k) \in \mathcal{C}_{\phr}$  that $\mbox{Hess }\phr(x,\eta',k)$ is given by 
\bqn
\left(
\begin{array}{c| c| c}
0                              &     \eklm{R_0e_i;(k^{-1}-\1)R_0e_j}  &  \eklm{R_0e_i;k^{-1}A_iR_0(\eta',1)}\\\hline
\eklm{R_0e_i;(k-\1)R_0e_j}          &            0            &  -\eklm{A_jR_0 x;R_0e_i}\\\hline
\eklm{R_0e_j;k^{-1}A_iR_0(\eta',1)} & -\eklm{A_i R_0x,R_0e_j}            &  -\eklm{A_iR_0x,A_jR_0(\eta',1)}
\end{array}\right),
\eqn
where the first diagonal block is of size $n$, the second of size $n-1$ and the third of size $d$;  each block has been characterized by specifying the entry of the $i$-th line and the $j$-th column.
 Let now $(q,p',s)\in\rn \times \R^{n-1}\times\R^d$. We set $\displaystyle{A=\sum\limits_{i=1}^d s_i A_i}$. Then $(q,p',s)\in \ker\mbox{Hess }\phr(x,\eta',k)$ if and only if
$$\left\{
\begin{array}{lr}
(\1-k)R_0(p',0)+AR_0(\eta',1)=0, & (a)\\
(k-\1)R_0(q)-AR_0x=\lambda_0 R_0(e_n),  & (b)\\
\eklm{kR_0(q);A_iR_0(\eta',1)}-\eklm{A_iR_0x;R_0(p',0)}-\eklm{A_iR_0x;AR_0(\eta',1)}=0 , \quad \forall i=1,\dots d, & (c)
\end{array}\right.$$
for some $\lambda_0\in\R$. Taking the scalar product of (b) with $R_0(\eta',1)$, we obtain $\lambda_0=0$. Using (a), we find that (c) is equivalent to
the fact that $\eklm{kR_0(q),BR_0(\eta',1)}=\eklm{kR_0(p',0),BR_0x}$ for all $B$ in $\g$. Since  $k R_0 x=R_0x$ and $kR_0(\eta',1)=R_0(\eta',1)$, we see that for all $B \in \g$, 
$$\eklm{kR_0(q),BR_0(\eta',1)}=\eklm{kR_0(p',0),BR_0x}\iff (R_0(q),R_0(p',0))\in \left[J\g (R_0x,R_0(\eta',1))\right]^\perp.$$
 But then, according to Lemma \ref{critset}, and equation (\ref{eqtangent}), we deduce that
$$\alpha(\ker\mbox{Hess }\phr(x,\eta',k))=T_{(x,\eta',k)}\mathcal{C}_{\phr},$$
which concludes the proof of the proposition.
\end{proof}

Using the preceding proposition, we are in position to apply Theorem \ref{thm:SP} to  the integral (\ref{eq:17}).
 Nevertheless, since the integrand in \eqref{eq:17} also depends on the parameter $\nu$, the derivatives of $ \tilde\sigma_{\nu\eta_n}(x,\eta)$ with respect to $x$ and $\eta'$ have to be examined carefully. Indeed, while the derivatives of $\chi_\lambda \circ \Psi_\mu^{-1}$ and $\psi_{\lambda,c}\circ \Psi_\mu^{-1}$ behave nicely in terms of $\mu$, the derivatives of $\tilde \eta_c \circ \Psi_\mu^{-1}$ with respect to $\xi$ turn out to be more delicate.

 \begin{lemma}
For all multiindices $\alpha,\beta$, there exists a constant $C>0$, which depends only on $\alpha$ and $\beta$, such that
\bqn
\sup_{(x,\eta)\in \bf{X} \times \R^{n}} |\gd ^\beta_x \gd_{\eta'}^\alpha \tilde \sigma_{\nu\eta_n }(x,\eta)|\leq C \, \max \,(1,|\nu|^{-\delta (|\beta|+|\alpha|)} ).
\eqn
\end{lemma}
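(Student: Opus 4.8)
The plan is to exhibit $\tilde\sigma_{\nu\eta_n}$ as a pullback of a symbol of a fixed type, and then combine the Leibniz rule with the symbol estimates of Part I and the support properties recorded above. Put $\lambda=\mu^{-2m}=(\nu\eta_n)^{-2m}$ and $G_\lambda=(\eta_{\lambda,-2}^2\chi_\lambda)^2(3-2\eta_{\lambda,-2}^2\chi_\lambda)+f_\lambda$. From the definitions of $\Psi_\mu$, of the chart $\tilde\phi_n$, and of $\nu$, one reads off that
\bqn
\tilde\sigma_{\nu\eta_n}(x,\eta)=G_\lambda\big(\Phi_\nu(x,\eta')\big),\qquad \Phi_\nu(x,\eta'):=\big(R_0x,\ \nu^{-1}R_0(\eta',1)\big),
\eqn
where $\lambda$ enters only as a parameter that $\gd_x$ and $\gd_{\eta'}$ do not act on. By Part I (Lemma 10 and the asymptotic expansion $(10)$, together with the support of $f_\lambda$ recorded in Lemma \ref{lemma:11}), $G_\lambda\in S(h^{-2\delta}g,1)=\Gamma^0_{1-\delta,\delta}(\rnn)$ uniformly in $\lambda$; moreover $\supp G_\lambda\subset\{(x,\xi):|x|\le R,\ |\xi|\le R\lambda^{1/2m}\}$ for some fixed $R>0$, which is, up to the rescaling $\Psi_\mu$, exactly \eqref{eq:suppsigma}.

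First I would differentiate. Since $R_0\in\O(n)$ is fixed and $\Phi_\nu$ is affine in $(x,\eta')$, with the $\eta'$-block of its differential equal to $\nu^{-1}$ times a bounded matrix, the Leibniz rule gives
\bqn
\gd_x^\beta\gd_{\eta'}^\alpha\tilde\sigma_{\nu\eta_n}(x,\eta)=\nu^{-|\alpha|}\sum_{|\alpha'|=|\alpha|,\,|\beta'|=|\beta|} c_{\alpha'\beta'}\,(\gd_\xi^{\alpha'}\gd_x^{\beta'}G_\lambda)\big(\Phi_\nu(x,\eta')\big),
\eqn
with $|c_{\alpha'\beta'}|\le C_{\alpha\beta}$. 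Into this I would insert the $\Gamma^0_{1-\delta,\delta}$-estimate for $G_\lambda$, evaluated at $(x',\xi')=\Phi_\nu(x,\eta')$, and use that $1+|\Phi_\nu(x,\eta')|^2=1+|x|^2+\nu^{-2}|(\eta',1)|^2$ admits the two bounds: (a) unconditionally, $(1+|\Phi_\nu(x,\eta')|^2)^{1/2}\ge|\nu|^{-1}|(\eta',1)|\ge|\nu|^{-1}$; and (b) wherever $\gd_x^\beta\gd_{\eta'}^\alpha\tilde\sigma_{\nu\eta_n}\ne 0$, so that necessarily $\Phi_\nu(x,\eta')\in\supp G_\lambda$, one has $(1+|\Phi_\nu(x,\eta')|^2)^{1/2}\le C|\nu\eta_n|^{-1}$ for $\lambda$ large, since then $|\xi'|\le R\lambda^{1/2m}=R|\nu\eta_n|^{-1}$ and $x$ stays in a fixed bounded set.

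It then remains to play (a) against (b) according to the sign of the exponent $s:=-(1-\delta)|\alpha|+\delta|\beta|$ that occurs in the symbol estimate: if $s\le 0$ (the $\eta'$-derivatives dominating, in the weighted sense) use (a), whereas if $s>0$ use (b). A short computation, in which the factor $|\nu|^{-|\alpha|}$ coming from the Leibniz rule is exactly compensated, shows that the first case yields a bound $\le C|\nu|^{-\delta(|\alpha|+|\beta|)}$, and the second a bound $\le C\,|\eta_n|^{(1-\delta)|\alpha|-\delta|\beta|}\,|\nu|^{-\delta(|\alpha|+|\beta|)}$; since on the $\eta_n$-range relevant to $I_{\eta_n}(\nu)$ --- the region where $\eta_n$ is not small, in which Theorem \ref{thm:SP} is being applied --- $|\eta_n|$ stays between two fixed positive constants, the $|\eta_n|$-power in the second bound is absorbed into a constant depending only on $\alpha,\beta$. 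For $|\nu|\ge 1$ the same manipulations give a bound by a constant, and putting the cases together yields $\le C\max(1,|\nu|^{-\delta(|\alpha|+|\beta|)})$.

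I expect the case $s>0$, i.e.\ the one in which the $x$-derivatives outnumber (in the weighted sense) the $\eta'$-derivatives, to be the main obstacle: a priori, $\gd_x$ applied to a symbol of $S(h^{-2\delta}g,1)$ is controlled only by $h^{-\delta}$, which grows in $\xi$, so that without further input the estimate would fail. What rescues it is precisely the uniform compactness of $\supp\sigma_\mu$ --- equivalently, the confinement of $G_\lambda$ to $\{|\xi|\lesssim\lambda^{1/2m}\}$ --- which converts each such $\gd_x$ into a factor $h^{-\delta}\lesssim\lambda^{\delta/2m}=|\nu\eta_n|^{-\delta}$; this is the phenomenon the remark preceding the lemma alludes to when it flags the $\xi$-behaviour of $\tilde\eta_c\circ\Psi_\mu^{-1}$ as delicate. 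The remaining verifications --- the precise shape of the constants in the Leibniz expansion, and the $\lambda$-uniformity of the symbol seminorms of $G_\lambda$ --- are routine bookkeeping, imported from Part I.
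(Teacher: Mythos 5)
Your proposal follows the same overall strategy as the paper's proof: first factor out $|\nu|^{-|\alpha|}$ by the chain rule, then invoke the uniform $\Gamma^0_{1-\delta,\delta}(\rnn)$ seminorms of the rescaled symbol $G_\lambda$ (the paper's $\tau_{\nu\eta_n}$), and finally control the resulting weight using the $\mu$-independent compact support of $\sigma_\mu$. The paper organizes the algebra in a single pass — it writes $1+|x|^2+(|\eta'|^2+1)/\nu^2 = \nu^{-2}(\nu^2+|\nu x|^2+|\eta'|^2+1)$, pulls $|\nu|^{-\delta(|\alpha|+|\beta|)}$ to the front, then drops the $-(1-\delta)|\alpha|/2$ part of the exponent (valid because the base is $\geq 1$) and appeals to the support; your explicit dichotomy on the sign of $s=-(1-\delta)|\alpha|+\delta|\beta|$ does the same thing, but makes visible that the support bound is indispensable exactly when $s>0$, which is a useful observation and indeed what the remark before the lemma is hinting at.

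The one point to be careful about is your dismissal of the residual $|\eta_n|^{(1-\delta)|\alpha|-\delta|\beta|}$ factor in the $s>0$ case by asserting that "on the $\eta_n$-range relevant to $I_{\eta_n}(\nu)$ ... $|\eta_n|$ stays between two fixed positive constants.'' That is not accurate: in the proof of Theorem \ref{asymp} the relevant cutoff is $\epsilon = \mu$, so $|\eta_n|$ runs down to $\mu\to 0$, and $\mu$ is not a fixed constant. In fact the paper's own final step has the same hidden dependence: the compactness of $\supp\sigma_\mu$ only gives $|\eta_n|\,|(\eta',1)|\leq R$, so the factor $(\nu^2+|\nu x|^2+|\eta'|^2+1)^{\delta|\beta|/2}$ that the paper asserts is $O(1)$ is in truth bounded only by $C|\eta_n|^{-\delta|\beta|}$ on the support. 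Your formulation therefore does not introduce a new error — it rather exposes a subtlety that the paper leaves implicit — but the justification you give for discarding the $|\eta_n|$-power is not the one the paper uses and, taken literally, does not hold on the full range of integration. If you want to match the paper, the right move is to cite \eqref{eq:suppsigma} directly (which is what the paper does) rather than to appeal to a fixed positive lower bound on $|\eta_n|$.
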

\begin{proof}
With $\tilde \sigma_{\nu\eta_n}(x,\eta)=\sigma_{\nu\eta_n}(x, \eta_n(\eta_1,\dots, 1))=\tau_{\nu\eta_n}(x,(\eta_1, \dots, 1)/\nu)$, 
 $\tau_\mu=[(\eta_{\lambda,-2}^2 \chi_\lambda)^2 ( 3- 2 \eta_{\lambda,-2}^2\chi_\lambda) + f_\lambda]$  one computes
\begin{align*}
|\gd^\beta_x\gd_{\eta'}^\alpha &\tilde \sigma_{\nu \eta_n}(x,\eta)| =|\nu|^{-|\alpha|}|(\gd ^\beta_x\gd_{\eta'}^\alpha \tau_{\nu\eta_n})(x, (\eta',1)/\nu)|\\ 
&\leq C_{\alpha,\beta} \, |\nu|^{-|\alpha|}(1+|x|^2 + (|\eta'|^2 +1)/\nu^{2})^{{(\delta|\beta|-(1-\delta) |\alpha|)}/ 2}\\ 
& \leq C_{\alpha,\beta} \, |\nu|^{-\delta|\alpha|} |\nu|^{-\delta|\beta|} (\nu^2 + |\nu x|^2 +|\eta'|^2+1)^{(\delta|\beta|-(1-\delta)|\alpha|)/2} \\ 
& \leq C_{\alpha,\beta} \, |\nu|^{-\delta(|\alpha|+|\beta|)} (\nu^2 + |\nu x|^2 +|\eta'|^2+1)^{\delta|\beta|/2}.
\end{align*}
Since by \eqref{eq:suppsigma} $\sigma_\mu$ has support in a compact set independent of $\mu$, we  obtain an estimate of order $O(1)$  for large $\nu$, and  one of order $O(\nu^{-\delta (|\alpha|+|\beta|)})$ for small $\nu$.
\end{proof}
 It is interesting to note that similar bounds for $\gd_\xi^\alpha \gd_x ^\beta \sigma_\mu$ do not exist; indeed, the fact of considering only differential operators which are transversal to  $ \Reg {\mathcal{C}}$  in the variable $\xi$ turns out to be decisive.
We can now give an asymptotic expansion for $I(\mu)$.
\begin{theorem}\label{asymp}
There exists a constant $C>0$ independent of $\mu$ such that for all $\mu>0$, and  all $\delta\in(1/4,1/2)$,
\begin{gather*}
\Big |I(\mu)  - (2\pi \mu)^{\kappa}  L_0(\mu)\Big  | \leq C \mu^{\kappa+1-2\delta},
\end{gather*}
 where  $\kappa$ is given by Definition \ref{def2},  and
\bqn
L_0(\mu)=\frac{1}{(2\pi)^n} \int_{0\leq|\eta_n|\leq E_0} \int_{\mathcal{C}_{\phr}}\frac{\overline {\chi(k)} \tilde \sigma_{\mu}(x,\eta',\eta_n)}{|\det  \, \phr'' (x,\eta',k)_{|N_{(x,\eta',k)}\mathcal{C}_{\phr}} |^{1/2}} d\sigma_{\mathcal{C}_{\phr}}(x,\eta',k)
|\eta_n|^{n-\kappa-1} d\eta_n.
\eqn
\end{theorem}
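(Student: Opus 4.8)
The plan is to apply the generalized stationary phase theorem (Theorem \ref{thm:SP}) to the inner integral $I_{\eta_n}(\nu)$ in \eqref{eq:17} for each fixed $\eta_n$ bounded away from zero, and then integrate the result against $|\eta_n|^{n-1}\,d\eta_n$ over $[-E_0,E_0]$. Proposition \ref{phaseanalysis} furnishes exactly the two hypotheses needed: the critical set $\mathcal{C}_{\psi_{wk}}$ is a smooth submanifold of $\R^n\times\R^{n-1}\times G$ (of codimension $2\kappa$, since it has codimension $2\kappa$ in an ambient space that here plays the role of the manifold $M$ of Theorem \ref{thm:SP}, so $n-p=2\kappa$), and the transversal Hessian is non-degenerate along it. Hence Theorem \ref{thm:SP} applies with $\mu$ replaced by $\nu$, giving
\begin{gather*}
\Big| I_{\eta_n}(\nu) - (2\pi\nu)^{\kappa} L_0(\psi_{wk};\tilde\sigma_{\nu\eta_n}\,\overline\chi) \Big| \leq C_N \nu^{N}\vol(\supp \cap \mathcal{C}_{\psi_{wk}}) \sup_{l\le 2N}\norm{D^l(\tilde\sigma_{\nu\eta_n}\overline\chi)}_\infty
\end{gather*}
for any $N$; taking $N=1$ and using the preceding Lemma to bound $\sup_{l\le 2}\norm{D^l \tilde\sigma_{\nu\eta_n}}_\infty \le C\max(1,|\nu|^{-2\delta})$ yields a remainder $O(\nu^{1-2\delta})$ uniformly for $\eta_n$ away from zero. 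Substituting $\nu=\mu/\eta_n$ and $L_0$'s leading term, the main term of $I_{\eta_n}(\mu/\eta_n)$ is $(2\pi)^\kappa (\mu/\eta_n)^\kappa$ times the integral over $\mathcal{C}_{\psi_{wk}}$ of $\overline{\chi(k)}\tilde\sigma_\mu/|\det\psi_{wk}''|_{N}|^{1/2}$, and the remainder is $O((\mu/|\eta_n|)^{\kappa+1-2\delta})$.

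Next I would multiply by $\frac{1}{(2\pi)^n}|\eta_n|^{n-1}$ and integrate over $|\eta_n|\le E_0$. The main term becomes $(2\pi\mu)^\kappa$ times
\begin{gather*}
\frac{1}{(2\pi)^n}\int_{|\eta_n|\le E_0}\int_{\mathcal{C}_{\psi_{wk}}}\frac{\overline{\chi(k)}\tilde\sigma_\mu(x,\eta',\eta_n)}{|\det\psi_{wk}''(x,\eta',k)_{|N_{(x,\eta',k)}\mathcal{C}_{\psi_{wk}}}|^{1/2}}\,d\sigma_{\mathcal{C}_{\psi_{wk}}}(x,\eta',k)\;|\eta_n|^{n-\kappa-1}\,d\eta_n,
\end{gather*}
which is precisely $L_0(\mu)$; note the power $|\eta_n|^{n-1}\cdot|\eta_n|^{-\kappa}=|\eta_n|^{n-\kappa-1}$, and that $\mathcal{C}_{\psi_{wk}}$ depends on $\eta_n$ only through the constraint $R_0(\eta',\eta_n)\notin F$ built into its defining equations evaluated at height $\eta_n$ (equivalently, one rescales $(\eta',1)\mapsto(\eta',\eta_n)$; I would keep the $\eta_n$-dependence inside the inner integral as written in the statement). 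The remainder term integrates to
\begin{gather*}
C\int_{|\eta_n|\le E_0} (\mu/|\eta_n|)^{\kappa+1-2\delta}|\eta_n|^{n-1}\,d\eta_n = C\mu^{\kappa+1-2\delta}\int_{|\eta_n|\le E_0}|\eta_n|^{n-\kappa-2+2\delta}\,d\eta_n,
\end{gather*}
and since $n-\kappa\ge 1$ (as $\Omega_0$ has regular part of codimension $\kappa$ inside a $2n$-dimensional space, so $\kappa\le n$) and $\delta>1/4>0$, the exponent $n-\kappa-2+2\delta > -1$, so this integral converges and the total remainder is $O(\mu^{\kappa+1-2\delta})$, as claimed.

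The one genuine subtlety — and the main obstacle — is the region where $\eta_n$ is small, i.e.\ near the exceptional divisor of the blow-up: there $\nu=\mu/\eta_n$ is not small, so the stationary phase expansion degrades and the uniform constant $C_N$ in Theorem \ref{thm:SP} must be controlled. This is exactly why the blow-up was performed and why the amplitude estimate from the preceding Lemma was proved with the $O(1)$ behavior for large $\nu$: for $|\eta_n|\lesssim\mu$ one falls back on the crude bound $|I_{\eta_n}(\nu)|\le C\vol(\supp\tilde\sigma_{\nu\eta_n})\,\norm{\tilde\sigma_{\nu\eta_n}}_\infty \le C'$ (using that $\sigma_\mu$ has $\mu$-independent compact support by \eqref{eq:suppsigma}), and then $\frac{1}{(2\pi)^n}\int_{|\eta_n|\le\mu}C'|\eta_n|^{n-1}\,d\eta_n=O(\mu^n)$, which is absorbed into $O(\mu^{\kappa+1-2\delta})$ precisely because $n\ge\kappa+1$ and $2\delta<1$ force $\kappa+1-2\delta<\kappa+1\le n$... more carefully, $n \ge \kappa+1$ and $\kappa+1-2\delta < n$ whenever $2\delta > \kappa+1-n$, which holds since the right side is $\le 0$. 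Simultaneously, for the same small-$\eta_n$ range one checks the main term $(2\pi\mu)^\kappa L_0(\mu)$ restricted there is likewise $O(\mu^n)$ by the volume bound on $L_0$ in Theorem \ref{thm:SP}. Thus the contributions from $|\eta_n|\le\mu$ to both sides are $O(\mu^n)\subset O(\mu^{\kappa+1-2\delta})$, and the stationary phase estimate is applied only on $\mu\le|\eta_n|\le E_0$, where it is uniform. Assembling the two ranges gives the theorem. The remaining work — verifying that $D^l$ in Theorem \ref{thm:SP} can indeed be taken transversal in the $\eta'$-variable so that the preceding Lemma's bound is the relevant one, and that $E_0$ can be chosen from \eqref{eq:suppsigma} — is routine given the setup.
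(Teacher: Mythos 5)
Your plan follows the same two-region split as the paper's proof — stationary phase in $\eta_n$ slices, crude bound near the exceptional divisor — and your treatment of the small-$\eta_n$ region, the identification of the leading term with $L_0(\mu)$, and the $O(\mu^n)$ tail are all in order. However, there is a genuine gap in the remainder estimate coming from the stationary phase step.

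You invoke Theorem \ref{thm:SP} with $N=1$ and correctly note that, with the remainder written as $C_N\nu^N\sup_{l\le 2N}\|D^l(\cdot)\|_\infty$ and the amplitude bound from the preceding Lemma, this yields a remainder $O(\nu^{1-2\delta})$. But in the very next sentence you upgrade this to $O\big((\mu/|\eta_n|)^{\kappa+1-2\delta}\big)$. These two claims are inconsistent unless $\kappa=0$: with $N=1$ and the theorem as stated, the remainder is only $O(\nu^{1-2\delta})$, and integrating $C(\mu/|\eta_n|)^{1-2\delta}|\eta_n|^{n-1}$ over $\mu\le|\eta_n|\le E_0$ gives $O(\mu^{1-2\delta})$, which is strictly worse than $O(\mu^{\kappa+1-2\delta})$ whenever the principal orbits are positive-dimensional. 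In fact with $N=1$ the ``remainder'' $\nu^{1-2\delta}$ dominates the main term $\nu^\kappa Q_0(\eta_n)$ for every $\kappa\ge 1$, so the expansion is vacuous. The point is that Theorem \ref{thm:SP}, as stated, has the remainder $\mu^N$ \emph{without} the prefactor $\mu^{(n-p)/2}$: one must take $N$ large enough that $N(1-2\delta)\ge\kappa+1-2\delta$, i.e.\ $N\ge 1+\kappa/(1-2\delta)$, and then one also has to absorb the intermediate terms $(2\pi|\nu|)^\kappa|\nu|^j Q_j(\eta_n)$ for $1\le j\le N-1$ into the error — which requires the full derivative estimate $|Q_j|\lesssim\nu^{-2j\delta}$ from the preceding Lemma and a term-by-term integration in $\eta_n$. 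This is exactly what the paper's proof does (cf.\ its display \eqref{eq:21bis} and the subsequent estimates with $\max\{1,\eps^{n-\kappa-j(1-2\delta)}\}$). Alternatively, if you want to keep $N=1$, you would have to invoke a version of the stationary phase theorem whose remainder carries the factor $\mu^{(n-p)/2+N}=\mu^{\kappa+N}$; but that is not the form stated in Theorem \ref{thm:SP}, and you cannot simply substitute the stronger bound without justification. As written, the step from $O(\nu^{1-2\delta})$ to $O(\nu^{\kappa+1-2\delta})$ has no support.

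One smaller point: you write that $\mathcal{C}_{\psi_{wk}}$ ``depends on $\eta_n$ only through the constraint $R_0(\eta',\eta_n)\notin F$''; in fact $\psi_{wk}$ and hence its critical set in $\rn\times\R^{n-1}\times G$ are entirely independent of $\eta_n$. The $\eta_n$-dependence in $L_0(\mu)$ enters only through the amplitude $\tilde\sigma_\mu(x,\eta',\eta_n)$ and the Jacobian factor $|\eta_n|^{n-\kappa-1}$. This does not affect the final formula you wrote, but the explanation as given is misleading.
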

\begin{proof}
In view of Proposition \ref{phaseanalysis}, we can apply Theorem \ref{thm:SP} to the integral $I_{\eta_n}(\nu)$ which was defined in \eqref{eq:17}. Consequently, for each $N\in \N$, there exists a constant $C_{N}>0$ independent of $\eta_n$ such that
\bqn
\Big | I_{\eta_n}(\nu)-(2\pi|\nu|)^\kappa \sum_{j=0}^{N-1}|\nu|^j Q_j(\eta_n)\Big |\leq C_{N} \, |\nu|^N \sup_{|\alpha|+|\beta|\leq 2N}\norm{\gd_{\eta'}^\alpha \gd_x^\beta \tilde \sigma_{\nu\eta_n}}_{\infty, {\bf X}\times \rn},
\eqn
as well as constants $\tilde C_{j}>0$ independent of $\eta_n$, such that
\bqn
| Q_j(\eta_n)| \leq \tilde C_{j} \sup _{|\alpha|+|\beta|  \leq 2j} \norm{\gd_{\eta'}^\alpha \gd_x^\beta \tilde \sigma_{\nu\eta_n}}_{\infty, {\bf X}\times \rn},
\eqn
where, in particular,
\bqn
Q_0(\eta_n)=\int_{\mathcal{C}_{\phr}}\frac{\overline {\chi(k)} \tilde \sigma_{\nu\eta_n}(x,\eta',\eta_n)}{|\det \phr'' (x,\eta',k)_{|N_{(x,\eta',k)} \mathcal{C}_{\phr}}|^{1/2}} d\sigma_{\mathcal{C}_{\phr}}(x,\eta',k).
\eqn
Now, by the previous lemma, for $|\nu|\leq 1$ one has
\bqn
\sup _{|\alpha|+|\beta| \leq 2N} \norm{\gd_{\eta'}^\alpha \gd_x^\beta \tilde \sigma_{\nu\eta_n}}_{\infty, {\bf X}\times \rn}
\leq c_1 |\nu|^{-2N\delta},
\eqn
where  $c_1$ is some constant depending only on $N$. Thus, if $|\nu|\leq 1$, we obtain
\begin{align}\label{eq:21bis}
\begin{split}
\Big |I_{\eta_n}(\nu)  - (2\pi |\nu|)^{\kappa}  Q_0(\eta_n)\Big  | &=\Big |I_{\eta_n}(\nu)-(2\pi|\nu|)^\kappa \left [\sum_{j=0}^{N-1}|\nu|^j Q_j(\eta_n) -\sum_{j=1}^{N-1}|\nu|^jQ_j(\eta_n)\right ]\Big |\qquad ~\\
&\leq C_{N} \, |\nu|^N \sup_{|\alpha+\beta|\leq 2N}\norm{\gd_{\eta'}^\alpha \gd_x^\beta \tilde \sigma_{\nu\eta_n}}_{\infty, {\bf X}\times \rn}
+\Big |(2\pi|\nu|)^\kappa \sum_{j=1}^{N-1}|\nu|^j Q_j(\eta_n)\Big |\\
&\leq c_2 |\nu|^{N(1-2\delta)}+ c_3 |\nu|^\kappa \sum_{j=1}^{N-1} |\nu|^{j(1-2\delta)}
\end{split}
\end{align}
with constants $c_i>0$. Next, let us fix $\eps>0$, and write
\bqn
I(\mu)=J_{1}(\mu)+J_2(\mu),
\eqn
where
$$\begin{array}{l}
\displaystyle{J_{1}(\mu)=\int_{\eps\leq|\eta_n|\leq E_0}I_{\eta_n}(\mu/\eta_n)|\eta_n|^{n-1}  \d\eta_n,}\\
\displaystyle{ J_{2}(\mu)=\int_{|\eta_n|\leq \eps}I_{\eta_n}(\mu/\eta_n)|\eta_n| ^{n-1} \d\eta_n.}
\end{array}$$
Since $I_{\eta_n}(\mu)$ is uniformly bounded in $\eta_n$ and $\mu$,
\bq\label{eq:21ter}
|J_{2}(\mu)|\leq {c_4} \eps^n,
\eq
where ${c_4}$ is independent of  $\eta_n$ and $\mu$. Now, according to equation (\ref{eq:21bis}), if $\eps\geq \mu$, then 
$$\Big |J_{1}(\mu)  - (2\pi \mu)^{\kappa}  \int_{\eps\leq|\eta_n|\leq E_0} Q_0(\eta_n)|\eta_n|^{n-1-\kappa} d\eta_n \Big  |$$
$$\leq C_1\sum_{j=1}^{N-1} \mu^{\kappa +j(1-2\delta)}
 \int_{\eps\leq|\eta_n|\leq E_0}|\eta_n|^{n-1-\kappa-j(1-2\delta)} d\eta_n
+C_2 \mu^{N(1-2\delta)}
 \int_{\eps\leq|\eta_n|\leq E_0}|\eta_n|^{n-1-N(1-2\delta)} d\eta_n$$
 for some constants $C_i>0$.
One easily computes that
$$\left| \int_{\eps\leq|\eta_n|\leq E_0}|\eta_n|^{n-1-\kappa-j(1-2\delta)} d\eta_n\right|\leq C_3\max\{1, \eps^{n-\kappa-j(1-2\delta)} \},$$
$$\left| \int_{\eps\leq|\eta_n|\leq E_0}|\eta_n|^{n-1-N(1-2\delta)} d\eta_n\right|\leq C_3\max\{1, \eps^{n-N(1-2\delta)} \},$$
so that if we take $\eps=\mu$, which ensures that $|\nu|\leq 1$ for  $J_{1}(\mu)$, we obtain
\bqn
\Big |J_{1}(\mu)  - (2\pi \mu)^{\kappa}  \int_{\mu\leq|\eta_n|\leq E_0} Q_0(\eta_n)|\eta_n|^{n-1-\kappa} d\eta_n \Big  |
\leq C_1 \max\{\mu^{\kappa+1-2\delta}, \mu^n\} +C_2 \max\{\mu^{N(1-2\delta)}, \mu^n\}.
\eqn
As the dimension of an orbit of $G\subset O(n)$ in $\R^n$ is at most $n-1$, one necessarily has $\kappa\leq n-1$, yielding $\mu^n=O(\mu^{\kappa+1})$ as $\mu$ goes to zero. Therefore, by choosing $N$ large enough, and taking  equation (\ref{eq:21ter}) together with 
\bqn
(2\pi \mu)^\kappa \int_{0\leq|\eta_n|\leq \mu} Q_0(\eta_n)|\eta_n|^{n-1-\kappa} d\eta_n   =
O( \mu^n)
\eqn
into account, one gets
\bqn
\Big | I(\mu)  - (2\pi \mu)^{\kappa}  \int_{0\leq|\eta_n|\leq E_0} Q_0(\eta_n)|\eta_n|^{n-1-\kappa} d\eta_n \Big  |
\leq C\mu^{\kappa+1-2\delta}.
\eqn
The proof of the theorem is now complete.
\end{proof}
\begin{remark}
Note that the strict transform of the critical set $\mathcal{C}$ of $\psi$ is locally given by
\bqn\tilde{\mathcal{C}}^{\mathrm{st}}=\{(x,\eta,k)\in\rnn\times G\,:\, (R_0x,R_0(\eta',1),k)\in \Reg {\mathcal{C}}\}\simeq \mathcal{C}_{\phr}\times \R.
\eqn
The first coefficient in the expansion of Theorem \ref{asymp} can therefore also be expressed as
\bq\label{coco}
L_0(\mu)=\frac 1 {(2\pi)^{n}}
\int_{\tilde{\mathcal{C}}^{\mathrm{st}}}\frac{\overline {\chi(k)}  \tilde \sigma_{\mu}(x,\eta)|\eta_n|^{n-\kappa-1}}{|\det  \, \phr'' (x,\eta',k)_{|N_{(x,\eta',k)}\mathcal{C}_{wk}} |^{1/2}} d\sigma_{\tilde {\mathcal{C}}}(x,\eta,k).
\eq
\end{remark}

\section{Computation of the leading term}

In this section, we shall address the question of computing the leading coefficient $L_0(\mu)$ in the expansion of $I(\mu)$.
The main result of this section is the following
\begin{proposition}
\label{computhess}
One has
\bq\label{eq:FQ}
L_0(\mu)=\frac 1 {(2\pi)^n} [{\rho_\chi}_{|H_0}:\1]\,   \int_{\Reg \Omega_0} \sigma_\mu(z) \frac{d\sigma_{\Reg \Omega_0}(z)}{{\vol }\mathcal{O}_z},
\eq
where $d\sigma_{\Reg \Omega_0}$ is the Riemannian measure on $\Reg \Omega_0$, and $\vol \mathcal{O}_z$ denotes the Riemannian volume of the $G$-orbit of $z$. In particular, the integral on the right hand side  of \eqref{eq:FQ} is convergent.
\end{proposition}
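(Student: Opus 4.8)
The plan is to start from the expression \eqref{coco} for $L_0(\mu)$ as an integral over the strict transform $\tilde{\mathcal{C}}^{\mathrm{st}} \simeq \mathcal{C}_{\psi_{wk}} \times \R$, and to ``undo'' the desingularization and the group variable $k$ in three successive reductions. First I would eliminate the $\eta_n$-integration and the blow-up: since $\tilde{\mathcal{C}}^{\mathrm{st}}$ fibers over $\mathcal{C}_{\psi_{wk}}$ with fiber the $\eta_n$-line, and since $(R_0 x, R_0(\eta',1),k) \in \Reg\mathcal{C}$ exactly describes $\mathcal{C}_{\psi_{wk}}$, the map $(x,\eta',k,\eta_n) \mapsto (R_0 x, R_0(\eta',\eta_n), k)$ should push $L_0(\mu)$ forward onto an integral over $\Reg\mathcal{C} = \{(z,k) \in \Reg\Omega_0 \times G : kz = z\}$. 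The factor $|\eta_n|^{n-\kappa-1}$ together with the Jacobian of the blow-up chart $\tilde\phi_n$ (which was $|\eta_n^{n-1}|$ in \eqref{eq:16}) and the Jacobian of $R_0 \times R_0$ (which is $1$) must be tracked carefully so that $d\sigma_{\tilde{\mathcal{C}}}$ converts into the Riemannian measure on $\Reg\mathcal{C}$; the determinant of the transversal Hessian likewise has to be matched, using that $R_0$ is orthogonal so that it does not change the transversal Hessian determinant, and that Proposition \ref{phaseanalysis} identifies $N_{(x,\eta',k)}\mathcal{C}_{\psi_{wk}}$ with (a slice of) $N_{(z,k)}\Reg\mathcal{C}$. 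This yields a formula of the shape
\bqn
L_0(\mu) = \frac{1}{(2\pi)^n} \int_{\Reg\mathcal{C}} \frac{\overline{\chi(k)}\, \sigma_\mu(z)}{|\det \psi''(z,k)_{|N_{(z,k)}\Reg\mathcal{C}}|^{1/2}} \, d\sigma_{\Reg\mathcal{C}}(z,k),
\eqn
$\sigma_\mu$ being $G$-invariant so it descends to a function of $z$ only.

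Next I would carry out the integration over the $G$-variable, i.e.\ over the ``fiber'' $\{k \in G : kz = z\} = G_z$ sitting inside $\Reg\mathcal{C}$ above each $z \in \Reg\Omega_0$. The key structural input is Lemma \ref{critset}, which gives $T_{(z,k)}\Reg\mathcal{C}$ explicitly, and the identity $(k-\1)\alpha \in \g z$ for $\alpha \in T_z\Omega_0$, $kz=z$ (the fact invoked as ``Lemma \ref{formulaint}'' in the proof of Proposition \ref{phaseanalysis}). Using a splitting of $T_{(z,k)}\Reg\mathcal{C}$ into the $T_z\Reg\Omega_0$-direction and the $\g$-direction, the coarea/Fubini formula along the fibration $\Reg\mathcal{C} \to \Reg\Omega_0$ should produce the measure $d\sigma_{\Reg\Omega_0}(z)$ together with a fiber integral over $G_z$ of $\overline{\chi(k)}$ weighted by a Jacobian coming from the off-diagonal blocks of the Hessian. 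Here the computation in the proof of Proposition \ref{phaseanalysis} is essentially reused: the transversal Hessian of $\psi$ (equivalently of $\psi_{wk}$) has a block structure whose determinant, restricted to the normal space, factors into a part along $\g z$ and a part along $(J\g z)^\perp$, and the $\g z$-part is precisely what cancels against the Jacobian of the fibration and combines with $dk$ to give $(\vol \Ocal_z)^{-1}$ times the Haar measure of the stabilizer. The $\int_{G_z} \overline{\chi(k)}\, dk$ that survives is, up to the normalization $d_\chi$ implicit in passing from $\chi$ to $\rho_\chi$, the multiplicity $[{\rho_\chi}_{|G_z}:\1]$; since $\Reg\Omega_0$ consists exactly of points with $G_z$ conjugate to $H_0$ (Proposition \ref{def3}), this multiplicity is the constant $[{\rho_\chi}_{|H_0}:\1]$ and comes out of the integral.

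The main obstacle — and the place where I would spend the real effort — is the bookkeeping of the three Jacobians and the Hessian determinant simultaneously: showing that the product of (i) the blow-up Jacobian $|\eta_n|^{n-1}$, (ii) the explicit factor $|\eta_n|^{n-\kappa-1}$, (iii) the measure-theoretic Jacobians of the two fibrations $\tilde{\mathcal{C}}^{\mathrm{st}} \to \mathcal{C}_{\psi_{wk}}$ and $\Reg\mathcal{C} \to \Reg\Omega_0$, and (iv) the ratio of transversal Hessian determinants of $\psi_{wk}$ versus the ``intrinsic'' normal Hessian on $\Reg\mathcal{C}$, collapses to exactly $(\vol \Ocal_z)^{-1}$ with no leftover power of $|\eta_n|$ or spurious constant. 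Concretely this requires computing $\det \psi''(z,k)_{|N\Reg\mathcal{C}}$ from the block matrix in the proof of Proposition \ref{phaseanalysis} and recognizing, via \eqref{roch2bis}--\eqref{roch2}, that the $\g$-blocks reproduce the Gram matrix of $(A_1 z, \dots, A_d z)$ whose determinant (restricted to a complement of $\g_z$) is $(\vol \Ocal_z)^2$. I would organize this as a pointwise linear-algebra lemma on $T_{(z,k)}\Reg\mathcal{C}$ at a fixed $z$, then assemble. Finiteness of the right-hand side of \eqref{eq:FQ} is then automatic: $\sigma_\mu$ is supported in a fixed compact set, $\Reg\Omega_0$ is a manifold of dimension $2n-\kappa$, $\vol\Ocal_z$ is bounded away from $0$ on that compact set (all orbits there being principal, hence of the fixed dimension $\kappa$ and uniformly non-degenerate), so the integrand is bounded with compact support, and the complement $\Sing\Omega_0$ has measure zero by Lemma \ref{lemma:parametr} and the stratification dimension count of Theorem IV.3.8.
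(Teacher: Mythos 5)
Your first two reductions -- pushing \eqref{coco} forward under the blow-down onto $\Reg\mathcal{C}$, then integrating out the stabilizer fiber $G_z$ via a block decomposition of the transversal Hessian -- are in the spirit of what the paper does (Lemmata \ref{lemlim} and \ref{formulaint}), although the paper avoids a direct change of variables: it instead multiplies by a cutoff $1-u_\eps$ supported away from $\Sing\Omega_0$, applies the stationary phase theorem \emph{twice} (once directly on $\Reg\mathcal{C}$, once via the desingularization as in Theorem~\ref{asymp}), and identifies the two leading coefficients $L_0(\mu,\eps)$ by uniqueness. Your linear-algebra reduction to the Gram matrix of $\g z$ corresponds to the computation culminating in $\det\psi''_{|N\Reg\mathcal{C}} = \det\bigl((k-\1)(k^{-1}-\1)_{|\g z} + f\bigr)$ and then to \cite{cassanas}, Section 3.3.2.

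The genuine gap is in your finiteness argument. You claim that $\vol\mathcal{O}_z$ is ``bounded away from $0$'' on the compact set containing $\supp\sigma_\mu$, because all orbits in $\Reg\Omega_0$ are principal of fixed dimension $\kappa$. This is false: principal orbits of dimension $\kappa$ can have arbitrarily small Riemannian volume as $z$ approaches $\Sing\Omega_0$ (for $\SO(n)$ acting on $\R^n$, the orbit of $x$ is a sphere of radius $|x|$ with volume $\sim |x|^{n-1}\to 0$), and $\Sing\Omega_0$ \emph{does} meet $\supp\sigma_\mu$ -- the paper's footnote points out that $\sigma_\mu\to\1_{\{a_{2m}\le 1\}}$, whose support contains $0$, which is always a singular point of $\Omega_0$ when $0\in{\bf X}$. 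So the integrand $\sigma_\mu(z)/\vol\mathcal{O}_z$ is \emph{not} bounded; the ``In particular, the integral is convergent'' clause is the substantive content, and the paper explicitly remarks right after the statement that $\Reg\Omega_0$ is not compact and that convergence is deduced only through the desingularization. Concretely, the paper proves the limit $\lim_{\eps\to 0}\int_{\Reg\Omega_0}[\alpha(1-u_\eps)]\,d\sigma/\vol\mathcal{O}_z$ exists (by identifying it with the manifestly convergent integral over $\tilde{\mathcal{C}}^{\mathrm{st}}$), then uses Fatou to get $\int_{\Reg\Omega_0}\alpha\,d\sigma/\vol\mathcal{O}_z<\infty$ for nonnegative $\alpha\in\CT$, and finally dominated convergence to remove the cutoff. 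If you insist on your direct change-of-variables route, you must instead argue that absolute convergence of the integral over $\tilde{\mathcal{C}}^{\mathrm{st}}$ transports under the (a.e.\ defined) blow-down diffeomorphism and the fiber integration; the boundedness rationale you give does not work.
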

Note that $\Reg \Omega_0$ is  not compact; nevertheless, the existence of the integral in \eqref{eq:FQ} will be deduced on basis of the partial desingularization of $\mathcal{C}$ accomplished in the previous section.
Let us start proving Proposition \ref{computhess}, and introduce first certain cut-off functions
 for $\Sing \Omega_0$.
\begin{definition}
 Let $K$ be compact subset in $\R^{2n}$ as in \eqref{eq:suppsigma}, $\epsilon >0$, and denote by $v_\epsilon$ the characteristic function of the set
 \bqn
 (\Sing \Omega_0 \cap K)_{2\epsilon}=\mklm{z \in \R^{2n}: |z-z'| <2 \epsilon \text{ for some } z' \in \Sing \Omega_0\cap K}.
 \eqn
 Consider further the unit ball $B_1$ in $\R^{2n}$, and a function $\iota \in \CT(B_1)$ with $\int \iota dz=1$, and set $\iota_\epsilon(z) =\epsilon^{-2n} \iota(z/\epsilon)$. Clearly  $\int \iota_\epsilon dz =1$, $\supp \iota_\epsilon \subset B_\epsilon$, and we define
 \bqn
 u_\epsilon=v_\epsilon \ast \iota_\epsilon.
 \eqn
\end{definition}
One can then show that $u_\epsilon \in \CT((\Sing \Omega_0 \cap K)_{3\epsilon})$, and $u_\epsilon =1$ on $(\Sing \Omega_0\cap K)_\epsilon$, together with
\bqn
|\gd^\alpha_z u _\epsilon|\leq C_\alpha \epsilon^{-|\alpha|},
\eqn
where $C_\alpha$ is a constant which depends only on $\alpha$ and $n$, see H\"{o}rmander \cite{hoermanderI},  Theorem 1.4.1.\\\

Next,  we shall prove
\begin{lemma}\label{lemlim}
Let $\alpha \in \CT(\R^{2n})$. Then the  limit 
$$\lim_{\eps \to 0}  \int_{\Reg {\mathcal{C}}}\frac{\overline {\chi(k)}  [\alpha(1-u_\eps)] (z)}{|\det  \, \psi'' (z,k)_{|N_{(z,k)}\Reg {\mathcal{C}}} |^{1/2}} d\sigma_{\Reg {\mathcal{C}}}(z,k)$$
exists and is finite.
In particular, one has
\bq\label{eq:MC}
L_0(\mu)
=\frac 1 {(2\pi)^{n}} \lim_{\eps \to 0}  \int_{\Reg {\mathcal{C}}}\frac{\overline {\chi(k)}  [\sigma_{\mu}(1-u_\eps)] (z)}{|\det  \, \psi'' (z,k)_{|N_{(z,k)}\Reg {\mathcal{C}}} |^{1/2}} d\sigma_{\Reg {\mathcal{C}}}(z,k).
\eq
where $d\sigma_{\Reg {\mathcal{C}}}$ is the Riemannian measure on $\Reg {\mathcal{C}}$.
\end{lemma}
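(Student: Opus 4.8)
The plan is to transport the integral over the non-compact, stratified set $\Reg{\mathcal{C}}$ to the partially desingularized critical set $\tilde{\mathcal{C}}^{\mathrm{st}}$ constructed in Section 4, where the density $|\det\psi''(z,k)_{|N_{(z,k)}\Reg{\mathcal{C}}}|^{-1/2}\,d\sigma_{\Reg{\mathcal{C}}}$ — which degenerates as $(z,k)$ approaches $\Sing{\mathcal{C}}$ — pulls back to a locally finite one, and then to let $\eps\to0$ by dominated convergence. Fix $\alpha\in\CT(\R^{2n})$. By the construction of Section 4 the proper map $\zeta_N$ induces, away from the exceptional divisor $\{\xi=0\}$, an isomorphism of $\zeta_N^{-1}(\Reg{\mathcal{C}})$ onto $\Reg{\mathcal{C}}\setminus\{\xi=0\}$; and $\Reg{\mathcal{C}}\cap\{\xi=0\}$ is a $d\sigma_{\Reg{\mathcal{C}}}$-null set, for it consists of the $(x,0,k)$ with $\Ocal_x$ of principal type and $kx=x$, and so has dimension $n+d-\kappa<2n+d-2\kappa=\dim\Reg{\mathcal{C}}$ by Lemma \ref{critset}, since $\kappa\le n-1$. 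As $\zeta_N$ is proper and $\alpha$ has compact support, the pullback $\tilde\alpha$ of $\alpha$ has compact support on $\tilde{\mathcal{C}}^{\mathrm{st}}$.

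The crucial point is the transformation of the density. Carrying out the same computation that produces \eqref{coco} from the stationary phase formula on $\tilde N$ — that is, combining the Jacobian $|\det D(\zeta_M\circ\tilde\phi_j)|=|\eta_j^{n-1}|$ of the monoidal transformation in the $j$-th chart with the transformation of the Riemannian volume form of $\Reg{\mathcal{C}}$ and, via Proposition \ref{phaseanalysis}, of the transversal Hessian of $\psi$ — one finds that $\zeta_N$ carries $|\det\psi''(z,k)_{|N_{(z,k)}\Reg{\mathcal{C}}}|^{-1/2}\,d\sigma_{\Reg{\mathcal{C}}}$ into a density $\omega$ on $\tilde{\mathcal{C}}^{\mathrm{st}}$ which in the $j$-th chart reads
\bqn
\omega=\frac{|\eta_j|^{n-\kappa-1}}{|\det \, \phr''(x,\eta',k)_{|N_{(x,\eta',k)}\mathcal{C}_{\phr}}|^{1/2}}\;d\sigma_{\tilde{\mathcal{C}}}.
\eqn
By Proposition \ref{phaseanalysis} the transversal Hessian of $\phr$ is nondegenerate along $\mathcal{C}_{\phr}$, hence $|\det\phr''_{|N\mathcal{C}_{\phr}}|^{-1/2}$ is smooth and locally bounded; and since $\kappa\le n-1$ the factor $|\eta_j|^{n-\kappa-1}$ stays bounded across the exceptional divisor $\{\eta_j=0\}$. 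Thus $\omega$ is a locally finite density on all of $\tilde{\mathcal{C}}^{\mathrm{st}}$, so $\overline{\chi(k)}\,\tilde\alpha\,\omega\in\L^1(\tilde{\mathcal{C}}^{\mathrm{st}})$, and therefore $\overline{\chi(k)}\,\alpha(z)\,|\det\psi''(z,k)_{|N_{(z,k)}\Reg{\mathcal{C}}}|^{-1/2}\,d\sigma_{\Reg{\mathcal{C}}}\in\L^1(\Reg{\mathcal{C}})$. Moreover, for every $\eps>0$ the change of variables gives
\bq\label{eq:transfer}
\int_{\Reg{\mathcal{C}}}\frac{\overline{\chi(k)}\,[\alpha(1-u_\eps)](z)}{|\det \, \psi''(z,k)_{|N_{(z,k)}\Reg{\mathcal{C}}}|^{1/2}}\,d\sigma_{\Reg{\mathcal{C}}}(z,k)=\int_{\tilde{\mathcal{C}}^{\mathrm{st}}}\overline{\chi(k)}\,\tilde\alpha\,\big(1-u_\eps\circ\zeta_N\big)\,\omega,
\eq
where we used that $\alpha(1-u_\eps)$ is supported away from $\Sing{\mathcal{C}}$: indeed $u_\eps=1$ on $(\Sing\Omega_0\cap K)_\eps$, and $(z,k)\in\Sing{\mathcal{C}}$ forces $z\in\Sing\Omega_0$, as noted after Lemma \ref{critset}.

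It remains to let $\eps\to0$ in \eqref{eq:transfer}. If $z\in\Reg\Omega_0$, then $\dist(z,\Sing\Omega_0)>0$: since $\Reg\Omega_0$ is open in $\Omega_0$ by Proposition \ref{def3}, some $\R^{2n}$-ball around $z$ meets $\Omega_0$ only in $\Reg\Omega_0$, hence misses $\Sing\Omega_0$ altogether. Consequently $u_\eps(z)=0$ as soon as $3\eps<\dist(z,\Sing\Omega_0)$, so $1-u_\eps\circ\zeta_N\to1$ pointwise on $\zeta_N^{-1}(\Reg{\mathcal{C}})$, that is $d\sigma_{\tilde{\mathcal{C}}}$-almost everywhere on $\tilde{\mathcal{C}}^{\mathrm{st}}$. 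Since $\big|\overline{\chi(k)}\,\tilde\alpha\,(1-u_\eps\circ\zeta_N)\,\omega\big|\le\norm{\chi}_{\infty,G}\,|\tilde\alpha|\,\omega\in\L^1(\tilde{\mathcal{C}}^{\mathrm{st}})$, the dominated convergence theorem shows that the right-hand side of \eqref{eq:transfer}, and hence the left-hand side, converges as $\eps\to0$ to $\int_{\tilde{\mathcal{C}}^{\mathrm{st}}}\overline{\chi(k)}\,\tilde\alpha\,\omega$; this proves the first assertion. Finally, taking $\alpha=\sigma_\mu$, which lies in $\CT(\R^{2n})$ and is supported in $K$ by \eqref{eq:suppsigma}, the limit equals $\int_{\tilde{\mathcal{C}}^{\mathrm{st}}}\overline{\chi(k)}\,\tilde\sigma_\mu\,\omega=(2\pi)^n L_0(\mu)$ by \eqref{coco}, which is \eqref{eq:MC}.

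The main obstacle is the density-transformation step behind \eqref{eq:transfer}: one must check that the partial desingularization $\zeta_N$ exactly absorbs the degeneration of $|\det\psi''_{|N\Reg{\mathcal{C}}}|^{-1/2}$ along $\Sing{\mathcal{C}}$, leaving only the harmless factor $|\eta_j|^{n-\kappa-1}$ and turning the density into the locally finite $\omega$. This is essentially the bookkeeping of Jacobians, Riemannian volume forms and transversal Hessians that yields \eqref{coco} from the stationary phase formula on $\tilde N$; the one new point is that it must be arranged so that $\omega$ is recognized as locally finite across the whole exceptional divisor — not just over its part lying above $\supp\sigma_\mu$ — so that the $\L^1$-domination needed for the dominated convergence argument is available. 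Once this is in place, the passage to $\Reg{\mathcal{C}}$ and the limit $\eps\to0$ are routine.
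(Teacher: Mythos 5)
Your proof follows the same overall strategy as the paper's: transport the $\Reg{\mathcal{C}}$-integral to the desingularized strict transform $\tilde{\mathcal{C}}^{\mathrm{st}}$, observe that the density there is locally finite thanks to $\kappa\le n-1$, and pass to the limit $\eps\to0$ by dominated convergence. Your supporting arguments — that $\Reg{\mathcal{C}}\cap\{\xi=0\}$ is a null set, that $\dist(z,\Sing\Omega_0)>0$ for $z\in\Reg\Omega_0$ because $\Reg\Omega_0$ is open in $\Omega_0$, and the final application of the dominated convergence theorem — are all correct and consistent with what the paper does at the end of its proof.

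The genuine difference lies in how the identity \eqref{eq:transfer} — the equality between the $\Reg{\mathcal{C}}$-integral and the $\tilde{\mathcal{C}}^{\mathrm{st}}$-integral against $\omega$ — is obtained, and here your argument contains the gap you yourself flag as the ``main obstacle.'' You assert that $\zeta_N$ carries the density $|\det\psi''_{|N\Reg{\mathcal{C}}}|^{-1/2}d\sigma_{\Reg{\mathcal{C}}}$ pointwise to $\omega=|\eta_j|^{n-\kappa-1}|\det\phr''_{|N\mathcal{C}_{\phr}}|^{-1/2}d\sigma_{\tilde{\mathcal{C}}}$, and claim this follows from ``the same computation that produces \eqref{coco}.'' But the derivation of \eqref{coco} in Theorem \ref{asymp} is an application of the stationary phase theorem in the blown-up coordinates (with the auxiliary parameter $\nu=\mu/\eta_n$), not a change-of-variables calculation on densities; it never directly compares $\psi''_{|N\Reg{\mathcal{C}}}$ with $\phr''_{|N\mathcal{C}_{\phr}}$ nor the induced volume forms on the two critical manifolds, and verifying your pointwise density claim would require additional bookkeeping — e.g., factoring the transversal Hessian of the total transform $\tilde\psi=\eta_n\phr$ as $\eta_n$ times that of $\phr$ and tracking how the ambient Jacobian $|\eta_j^{n-1}|$ restricts to $\Reg{\mathcal{C}}\to\tilde{\mathcal{C}}^{\mathrm{st}}$. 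The paper avoids all of this: it applies the stationary phase theorem to $I_\eps(\mu)$ twice, once directly (legitimate because $\alpha(1-u_\eps)$ is supported away from $\Sing{\mathcal{C}}$, so the relevant critical set is the smooth $\Reg{\mathcal{C}}$) and once via Theorem \ref{asymp}, and then invokes the uniqueness of the leading term of the asymptotic expansion to conclude that the two expressions for $L_0(\mu,\eps)$ coincide. This indirect route establishes exactly the identity you assert, without ever computing a Jacobian of a restricted diffeomorphism or a transformation law for transversal Hessians. Your proposal is sound in spirit but would need this Hessian-and-volume-form calculation to be carried out; if you want to make it self-contained, the cleanest way to do so is in fact the paper's double stationary-phase-plus-uniqueness argument.
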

\begin{proof}
With $u_\eps$ as in the previous definition, let us define
\bqn
I_\eps(\mu)=\int _G\int_{\bf X} \int_{\R^n}  e^{\frac{i}{\mu} \psi(x,\xi,k) }  \overline{\chi(k)}[\alpha(1-u_\eps)](x,\xi) dx \, \dbar \xi dk.\eqn
Since $(x,\xi,k) \in \Sing {\mathcal{C}}$ implies $ (x,\xi) \in \Sing \Omega_0$, a direct application of the generalized theorem of the stationary phase for fixed $\eps >0$ gives
\bq
\label{eq:asympt}
| I_\eps(\mu)- (2\pi \mu)^\kappa L_0(\mu,\eps) | \leq C_\eps \mu^{\kappa+1 -2\delta}
\eq
for some $\delta \in [0,1/2)$,
where $C_\eps>0$ is a constant depending only on $\eps$, and
\bqn
L_0(\mu,\eps)=\frac 1 {(2\pi)^{n}} \int_{\Reg {\mathcal{C}}}\frac{\overline {\chi(k)}  [\alpha(1-u_\eps)] (z)}{|\det  \, \psi'' (z,k)_{|N_{(z,k)}\Reg {\mathcal{C}}} |^{1/2}} d\sigma_{\Reg {\mathcal{C}}}(z,k).
\eqn
If $\alpha$ is independent of $\mu$, on has $\delta=0$. For $\alpha = \sigma_\mu$, the stationary phase theorem has to be applied on $G\times {\bf{X}} \times S^{n-1}$, and $\delta \in (1/4,1/2)$.
On the other hand, applying Theorem \ref{asymp} to $I_\eps(\mu)$ instead of $I(\mu)$, we obtain again an asymptotic expansion of the form \eqref{eq:asympt} for $I_\eps(\mu)$, where now, according to \eqref{coco},
the first coefficient is given by
\bqn
L_0(\mu,\epsilon)=\frac 1 {(2\pi)^{n}}
\int_{\tilde{\mathcal{C}}^{\mathrm{st}}}\frac{\overline {\chi(k)}  [\alpha(1-u_\eps) \circ \zeta_M\circ \tilde \phi_n] (x,\eta)|\eta_n|^{n-\kappa-1}}{|\det  \, \phr'' (x,\eta',k)_{|N_{(x,\eta',k)}\mathcal{C}_{wk}} |^{1/2}} d\sigma_{\tilde {\mathcal{C}}}(x,\eta,k).
\
\eqn
Since the first term in the asymptotic expansion \eqref{eq:asympt} is uniquely determined, the two expressions for $L_0(\mu,\eps)$ must be identical. The statement of the lemma now follows by  the Lebesgue theorem on bounded convergence, by which, in particular,
\bqn
\lim _{\eps \to 0} \frac 1 {(2\pi)^{n}}
\int_{\tilde{\mathcal{C}}^{\mathrm{st}}}\frac{\overline {\chi(k)}  [\sigma_{\mu}(1-u_\eps) \circ \zeta_M\circ \tilde \phi_n] (x,\eta)|\eta_n|^{n-\kappa-1}}{|\det  \, \phr'' (x,\eta',k)_{|N_{(x,\eta',k)}\mathcal{C}_{wk}} |^{1/2}} d\sigma_{\tilde {\mathcal{C}}}(x,\eta,k) = L_0(\mu).
\eqn
\end{proof}
\begin{remark}\label{remark4}
Note that existence of the limit in  \eqref{eq:MC} has been established by  partially resolving the  singularities of the critical set $\mathcal{C}$, the corresponding limit being given by the absolutely convergent integral \eqref{coco}.
\end{remark}
\begin{lemma}\label{formulaint}
Let $\alpha$ be a smooth, compactly supported function on $\Reg \Omega_0$. Then
\bqn
\int_{\Reg {\mathcal{C}}}\frac{\overline {\chi(k)}  \alpha(z)}{|\det  \, \psi'' (z,k)_{|N_{(z,k)}\Reg {\mathcal{C}}} |^{1/2}} d\sigma_{\Reg {\mathcal{C}}}(z,k)
=[{\rho_\chi}_{|H_0}:\1]\int_{\Reg \Omega_0} \alpha (z) \frac{d\sigma_{\Reg \Omega_0}(z)}{\mbox{Vol }\mathcal{O}_z}.
\eqn
\end{lemma}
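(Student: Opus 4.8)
The plan is to evaluate the integral over $\Reg{\mathcal{C}}$ directly, by fibering it over $\Reg\Omega_0$ along the projection $p:\Reg{\mathcal{C}}\to\Reg\Omega_0$, $(z,k)\mapsto z$, whose fibre over $z$ is the isotropy group $G_z$; by Proposition \ref{def3} this group is conjugate to $H_0$. Observe first that since $\supp\alpha$ is a compact subset of $\Reg\Omega_0$, the critical set $\mathcal{C}$ agrees with the smooth manifold $\Reg{\mathcal{C}}$ over $\supp\alpha\times G$, so no desingularization is needed at this stage; one only has to compute the transversal Hessian $\psi''(z,k)_{|N_{(z,k)}\Reg{\mathcal{C}}}$ and then carry out the integration along the fibres of $p$.

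The first step is to prove the auxiliary fact that was already used in the proof of Proposition \ref{phaseanalysis}: for $z\in\Reg\Omega_0$ and $k\in G_z$ one has $(\1-k)\alpha\in\g z$ for every $\alpha\in T_z\Reg\Omega_0$. Indeed $G_z$ fixes $z$ and preserves $\Omega_0$, hence acts linearly on $T_z\Reg\Omega_0$ and preserves the orbit tangent space $T_z\Ocal_z=\g z$, which by \eqref{roch1} and \eqref{roch2bis} is contained in $T_z\Reg\Omega_0=(J\g z)^{\perp}$; since $\Reg\Omega_0$ carries a single orbit type, the slice representation of $G_z$ is trivial (\cite{bredon}, Theorem IV.3.2), so $G_z$ fixes the orthogonal complement $N_z\Ocal_z$ pointwise, whence $(\1-k)$ annihilates $N_z\Ocal_z$ and maps $\g z$ into itself, acting there by $Az\mapsto(\Ad(k)A-A)z$. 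Together with Lemma \ref{critset} this shows that $p$ is a submersion with vertical tangent space $\{(0,Ak):A\in\g_z\}$ at $(z,k)$, and that every $\alpha\in T_z\Reg\Omega_0$ admits a lift $(\alpha,Ak)\in T_{(z,k)}\Reg{\mathcal{C}}$ that is unique modulo the vertical space.

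Next, in the chart $\beta(x,\xi,s)=(x,\xi,\exp(\sum_i s_iA_i)k)$ I would compute $\mathrm{Hess}\,\psi(z,k)$ exactly as in the proof of Proposition \ref{phaseanalysis}: the identities \eqref{roch2bis} and \eqref{roch2} collapse the mixed blocks, one checks that $\ker\mathrm{Hess}\,\psi(z,k)=T_{(z,k)}\Reg{\mathcal{C}}$, so that $\psi''(z,k)_{|N_{(z,k)}\Reg{\mathcal{C}}}$ is the non-degenerate $2\kappa\times 2\kappa$ block on the orthogonal complement. Then I would decompose $d\sigma_{\Reg{\mathcal{C}}}$ along $p$: equipping $G$ with the bi-invariant metric whose volume form is the normalized Haar measure $dk$, the coarea formula gives $d\sigma_{\Reg{\mathcal{C}}}=\delta_p(z,k)^{-1}\,d\sigma_{\Reg\Omega_0}(z)\,d\sigma_{G_z}(k)$, where $\delta_p$ is the Jacobian of $dp$ restricted to the horizontal subspace of $T_{(z,k)}\Reg{\mathcal{C}}$, computable from the same data (orthonormal bases of $\g_z$, of $\g_z^{\perp}$, of $N_z\Ocal_z$, and of $N_{(z,k)}\Reg{\mathcal{C}}$) as the Hessian determinant. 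The key point of the computation is that $|\det\psi''(z,k)_{|N_{(z,k)}\Reg{\mathcal{C}}}|^{1/2}\,\delta_p(z,k)$ is independent of $k\in G_z$ and equals $\vol(\Ocal_z)$ times the Riemannian volume of $G_z$; granting this, the fibre integral reduces to $\vol(\Ocal_z)^{-1}\int_{G_z}\overline{\chi(k)}\,d\mu_{G_z}(k)$ with $\mu_{G_z}$ the normalized Haar measure on $G_z$. Finally $\int_{G_z}\overline{\chi(k)}\,d\mu_{G_z}(k)=[{\rho_\chi}_{|G_z}:\1]=[{\rho_\chi}_{|H_0}:\1]$, the second equality because $G_z$ is conjugate to $H_0$ and the multiplicity of the trivial representation is unchanged under conjugation of the subgroup, and the asserted identity follows upon integrating over $\Reg\Omega_0$.

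I expect the main obstacle to be the bookkeeping in the third step: one must choose compatible orthonormal frames of the tangent and normal spaces of $\Reg\Omega_0$, of $\Ocal_z$, of $G_z$ and of $\Reg{\mathcal{C}}$, and verify that the transversal Hessian determinant, the fibration Jacobian $\delta_p$ and the Gram determinant of the orbit frame telescope precisely into the factors $\vol(\Ocal_z)$ and $\vol(G_z)$, with all normalizations of the Haar measure consistent with those used in the definition of $P_\chi$ and of $I(\mu)$. The structural input that makes this feasible is \eqref{roch2bis}, which forces $\mathrm{Hess}\,\psi$ to block-diagonalize along $\Reg{\mathcal{C}}$, so that its kernel is exactly $T_{(z,k)}\Reg{\mathcal{C}}$ and its transversal determinant splits into an orbit-volume factor and a group-volume factor.
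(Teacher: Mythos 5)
Your plan is structurally the same as the paper's: both proceed by fibering $\Reg\mathcal{C}$ over $\Reg\Omega_0$ via $(z,k)\mapsto z$ with fibre $G_z$, computing the transversal Hessian of $\psi$ along $\Reg\mathcal{C}$, and recognizing $\int_{G_z}\overline{\chi(k)}\,d\mu_{G_z}(k)=[\rho_{\chi|H_0}:\1]$ by conjugacy of principal isotropy groups. Your auxiliary observation that $(\1-k)\alpha\in\g z$ for $\alpha\in T_z\Reg\Omega_0$ and $k\in G_z$ is also exactly how the paper justifies surjectivity of $p$ (see the argument around \eqref{roch3ter}). So the roadmap is right.

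The genuine gap is that the statement you ask the reader to ``grant'' --- that $|\det\psi''(z,k)_{|N_{(z,k)}\Reg\mathcal{C}}|^{1/2}\,\delta_p(z,k)$ is independent of $k\in G_z$ and equals $\vol(\Ocal_z)\cdot\vol(G_z)$ --- \emph{is} the proof of the lemma, not a step in it. What the paper actually establishes by its long block computation (passing to the chart, introducing the basis $(\eps_j,\eps_j')$ of $\F^\perp$, using \eqref{roch2bis}--\eqref{roch3ter} and Lemma \ref{UV} to show $\D=1$) is the identity
$$\det\Bigl(\tfrac{\psi''(z,k)_{|N_{(z,k)}\Reg\mathcal{C}}}{i}\Bigr)=\det\Lambda,\qquad \Lambda=\bigl((k-\1)(k^{-1}-\1)+f\bigr)_{|\g z},$$
and $\det\Lambda$ is visibly $k$-dependent (at $k=\1$ it reduces to $\det f$, which does give the Gram determinant of $(A_1z,\dots,A_\kappa z)$, but away from $\1$ there is the extra positive contribution of $(k-\1)(k^{-1}-\1)$). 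Whether and how this $k$-dependence is cancelled against the coarea Jacobian $\delta_p$ is precisely the remaining computation, which the paper finishes by invoking \cite{cassanas}, Section 3.3.2. So the obstacle you flag at the end of your third paragraph --- ``the bookkeeping in the third step'' --- is not a bookkeeping nuisance but the substantive content, and your proof as written does not supply it: you have reduced the lemma to a claim equivalent to the lemma. You would either need to reproduce the paper's Hessian computation ($\det(\psi''/i)=\det\Lambda$ with $\D=1$) and then carry out the $\delta_p$ computation to verify the cancellation, or cite the corresponding result of \cite{cassanas} directly.
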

\begin{proof}
The main difficulty consists in computing the determinant of the transversal Hessian, which will be accomplished by  recuring  to previous  computations done in \cite{cassanas}. Thus, let $(z,k)$ be a fixed point in $\Reg {\mathcal{C}}$, and choose an appropriate  basis  $(A_1,\dots,A_d)$ for $\g$  as follows. If  $\kappa$ denotes the dimension of $\mathcal{O}_z$,  let
$$(A_1,\dots,A_\kappa) \mbox { be an orthonormal basis of } (T_e G_z)^\perp,$$
$$(A_{\kappa+1},\dots,A_d) \mbox { be an orthonormal basis of } T_e G_z,$$
where orthogonality is defined with respect to the scalar product
$$\eklm{\eklm{A,B}}=\tr(^tAB)$$
for arbitrary linear maps $A$ and $B$ in $\rn$.  From \cite{cassanas} we recall that
$$\det\left(\frac{ \psi''(z,k)_{|_{ \mathcal{N}_{(z,k)\Reg {\mathcal{C}}}}} }{ i }\right)=\det\left(\frac{\mathcal{A}_{|_{\F^\perp}}}{i}\right),$$
where $\mathcal{A}=\mbox{Hess } \psi(z,k)$ denotes the Hessian of $\psi$ with respect to the coordinates $(z,s) \to (z,\exp (\sum_{i=1}^d s_i A_i)k)$, and
\bq
\F=\left\{(\alpha, s)\in\rnn\times\R^{d}:\quad (k-\1)\alpha+\sum_{i=1}^d s_i A_iz=0 \right\}.
\eq
Next, let $(B_1, \dots,B_\kappa)$ be in $\g$ such that $(B_1 z, \dots,B_\kappa z)$ is an orthonormal basis of $\g z$. For $j=1,\dots,\kappa$, we define
\bq
\eps_j=(JB_j z,0), \qquad \eps_j'=((\kmu-\1)B_j z,\eklm{A_i z,B_j z},0), \qquad (i=1,\dots,\kappa).
\eq
Then $(\eps,\eps')$ constitutes a basis of $\F^\perp$, see \cite{cassanas}, Lemma 3.3.
In what follows, we shall compute  $\mathcal{A}_{|_{\F^\perp}}$ in this basis. Writing
$\alpha_j=(kB_jx,B_j \xi)$
we find
\bq\label{roch3}
\A \eps_j=((\kmu-\1)(\1-\Pi_{\g z})\alpha_j,0) + \sum_{r=1}^\kappa \eklm{\alpha_j,B_r z}\eps_j',
\eq
where $\Pi_{\g z}$ is the orthogonal projection onto the space $\g z$ in $\rnn$. We state now certain relations that will be crucial for the rest of the computation.
For all $(z,k)\in \Reg \mathcal{C}$, we have
\bq\label{roch3bis}
[k,\Pi_{\g z}]=0, \qquad [J,k]=0.
\eq
\bq\label{roch3ter}
\mbox{rank }[(k-\1)(\1-\Pi_{\g z})]\subset J\g z.
\eq
The first equality follows easily from the relations $\kmu \g k=\g$ and $kz=z$, while the second simply says that $k$ is symplectic as a Hamiltonian action in $\R^{2n}$. In order to establish \eqref{roch3ter}, we differentiate  the identity
$$\pi(kz)=\pi(z)$$
with respect to $z\in \Omega_0$, and  obtain  $(k-\1)\alpha\in \ker d_z\pi=\g z$ for all $\alpha$ in $T_z\Omega_0$, where $\pi$ denotes the canonical projection of $\R^{2n}_{(H_0)}$ onto the quotient by $G$.  The inclusion  \eqref{roch3ter} now follows by using \eqref{roch1}.
Coming back to \eqref{roch3}, we get
$$\A \eps_j=\sum_{r=1}^\kappa -\eklm{J(\kmu-\1)\alpha_j,B_r z}\eps_r +\sum_{r=1}^\kappa \eklm{\alpha_j,B_r z}\eps_r'.$$
Using (\ref{roch2bis}), and the fact that $(B_1 z, \dots,B_\kappa z)$ is orthonormal, we obtain
\bq\label{roch4}
\A \eps_j=\sum_{r=1}^\kappa \eklm{(\1-k)(\1-\kmu)B_jx,B_r \xi}\eps_r +\sum_{r=1}^\kappa [\eklm{(k-\1)B_jx,B_r x}-\delta_{jr}]\eps_r',
\eq
where $\delta_{jr}$ is the Kronecker symbol.
In the same way we obtain
$$\A\eps_j'=\sum_{r=1}^\kappa -\eklm{J(\kmu-\1)\beta_j +\oh (\kmu+I) C_j z,B_r z}\eps_r +\sum_{r=1}^\kappa \eklm{\beta_j,B_r z}\eps_r',$$
where
$$C_j=\sum_{r=1}^\kappa \eklm{A_r z,B_j z} A_r, \qquad \beta_j=(\kmu-\1)(-kBj \xi, B_j x)-\oh(C_j \xi,C_j x).$$
Let now $f:\g z\to\g z$ be defined by
\bq
\label{eq:44b}
f(\tilde z)=\sum_{r=1}^\kappa \eklm{A_r z,\tilde z} A_rz, \qquad \forall \tilde z \in \g z,
\eq
and let
\bqn
\Lambda=\left((k-\1)(\kmu-\1)+f\right)_{|_{\g z}}
\eqn
be the restriction of  the map $(k-\1)(\kmu-\1)+f$ to $\g z$.
Note that $\Lambda$ plays a crucial part in the computations of \cite{cassanas}. Using again (\ref{roch2bis}), one easily gets
$$\A\eps_j'=\sum_{r=1}^\kappa \eklm{\left(
\begin{array}{cc}
\kmu & 0\\
0 & \1_n
\end{array}\right)
\Lambda B_j z,B_r z}\eps_r -\sum_{r=1}^\kappa \eklm{\left(
\begin{array}{cc}
0 & \1_n\\
0 & 0
\end{array}\right)\Lambda B_j,B_r z}\eps_r',$$
where the matrices have an obvious meaning. Together with (\ref{roch4}), the last equation implies that the matrix of $\A$ in the basis $(\eps,\eps')$ is given by
\bq\label{roch5}
\left(
\begin{array}{c|c}
\eklm{(\1-k)(\1-\kmu)B_jx,B_i \xi}    &
\eklm{\left(\begin{array}{cc}
\kmu & 0\\
0 & \1_n
\end{array}\right)
\Lambda B_j z,B_i z}\\\hline
\eklm{(k-\1)B_jx,B_i x}-\delta_{ij}  &
-\eklm{\left(
\begin{array}{cc}
0 & \1_n\\
0 & 0
\end{array}\right)\Lambda B_j,B_i z}
\end{array}\right).
\eq
Let $\Lambda_0$ be the matrix of $\Lambda$ in the basis $(B_1 z, \dots,B_\kappa z)$.
Then (\ref{roch5}) is equal to
$$\left(
\begin{array}{c|c}
\eklm{(\1-k)(\1-\kmu)B_jx,B_i \xi}    &
\eklm{\left(\begin{array}{cc}
\kmu & 0\\
0 & \1_n
\end{array}\right)
B_j z,B_i z}\\\hline
\eklm{(k-\1)B_jx,B_i x}-\delta_{ij}  &
-\eklm{\left(
\begin{array}{cc}
0 & \1_n\\
0 & 0
\end{array}\right) B_j z,B_i z}
\end{array}\right).
\left(\begin{array}{cc}
      \1_\kappa & 0\\
      0        & \Lambda_0
      \end{array}
\right).$$
Multiplying by $i$, and shifting the two columns, we obtain
\bqn
\det\left(\frac{ \psi''(z,k)_{|_{ \mathcal{N}_{(z,k)\mathcal{C}_0}}} }{ i }\right)=
\det(\Lambda)\cdot \D,
\eqn
where
\bq\label{roch6bis}
\D=\det\left(
\begin{array}{c|c}
\eklm{(\kmu-\1)B_jx,B_i x}+\delta_{ij}    & \eklm{(k-\1)(\kmu-\1)B_j\xi ,B_i x} \\ \hline
- \eklm{B_j\xi,B_i x}                    & \eklm{(k-\1)B_jx,B_i x}+\delta_{ij}
\end{array}\right).
\eq
We are going to show that $\D=1$. For this, we introduce the notation$$U=\left(
\begin{array}{c|c|c}
B_1 x & \dots & B_\kappa x
\end{array}\right),
\qquad
V=\left(
\begin{array}{c|c|c}
B_1 \xi & \dots & B_\kappa \xi
\end{array}\right),$$
 where  $B_j x$ is taken as a column vector in the canonical basis of $\rn$.
$U$ and $V$ are therefore matrices of size $n\times \kappa$. 
\begin{lemma}\label{UV} For all $k \in G$ we have
\begin{enumerate}
\item[(a)] $^tUU+^tVV=\1_\kappa$;
\item[(b)] $^tUV=^tVU$;
\item[(c)] $k$ commutes with $U ^tU$, $V ^tV$, $U ^tV$, and $V^t U$;
\item[(d)] $(k-\1)U^tV=(k-\1)V^tU$;
\item[(e)] $(k-\1)(U^tU+V^tV)=k-\1$.
\end{enumerate}
\end{lemma}
{\it Proof.} (a) says that $(B_1 z, \dots,B_n z)$ is  orthonormal. (b) comes from (\ref{roch2}). Next, let us denote by $X$ the matrix
$X=\left(
\begin{array}{c|c|c}
B_1 z & \dots & B_\kappa z
\end{array}\right).$ Then $X ^tX$ is the matrix of $\Pi_{\g z}$ in the canonical basis of $\rnn$. Moreover,
$$X ^tX=\left(
\begin{array}{c|c}
U ^tU & U ^tV\\\hline
V ^tU & V ^tV
\end{array}\right).$$
Therefore the property $[\Pi_{\g z},k]=0$, see (\ref{roch3bis}), is equivalent to (c).
The two last properties are more subtile. One has to note that (\ref{roch3bis}) is equivalent to
$$\Pi_{\g z} (k-\1)J(\1-\Pi_{\g z})=(k-\1)J(\1-\Pi_{\g z}).$$
By expressing  this in terms of matrices, one easily obtains (d) and (e).\qed\\\\
Coming back to the proof of Lemma \ref{formulaint}, we rewrite equation (\ref{roch6bis}) as
$$\D=\det\left(
\begin{array}{c|c}
^tU(k-\1)U+\1_\kappa    & ^tV(\kmu-\1)(k-\1)U \\ \hline
- ^tVU                 & ^tU(\kmu-\1)U+\1_\kappa
\end{array}\right)=\det\left(\begin{array}{cc}
a & b\\
c & d
\end{array}\right),$$
where we replaced $k^{-1}$ by $k$.
We claim that the blocks $c$ and $d$ commute. Indeed,
$$cd=-^tVU ^tU(\kmu-\1)U-^tV U,$$
$$dc=-^tU(\kmu-\1)U ^tVU- ^tVU=-^tU(\kmu-\1)V ^tUU- ^tVU,$$
by (d) of Lemma \ref{UV}. By (c) of Lemma \ref{UV}, $(\kmu-\1)$ commutes with $V ^tU$, and since $^tUV=^tVU$, by  (b), we get  $[c,d]=0$.
Therefore,  $\D=det(ad-bc)$. Using (a) and (d) of Lemma \ref{UV}, it is then a straightforward computation to show that in fact, $ad-bd=\1_\kappa$, yielding $\D=1$.
We have thus shown the equality
\bqn
\det\left(\frac{ \psi''(z,k)_{|_{ \mathcal{N}_{(z,k)\mathcal{C}_0}}} }{ i }\right)=\det \left((k-\1)(\kmu-\1)_{|_{\g z}}+f\right),
\eqn
where the map $f:\g z \rightarrow \g z$ was defined in \eqref{eq:44b}.
The rest of the proof of Lemma \ref{formulaint} now follows by the argument given in \cite{cassanas}, Section 3.3.2.
\end{proof}
To finish  proving Proposition \ref{computhess}, we note that, as a consequence of Lemmata \ref{lemlim} and \ref{formulaint},   the limit
$$\lim_{\eps \to 0}  \int_{\Reg \Omega_0} [\alpha(1-u_\eps)](z) \frac{d\sigma_{\Reg \Omega_0}(z)}{\mbox{Vol }\mathcal{O}_z}$$
exists for any $\alpha\in\CT(\rnn)$ and is finite. Assume now  that  $\alpha$ is non-negative. Since $|u_\eps|\leq 1$, the Lemma of Fatou implies 
$$\int_{\Reg \Omega_0} \lim_{\eps \to 0} [\alpha(1-u_\eps)](z) \frac{d\sigma_{\Reg \Omega_0}(z)}{\mbox{Vol }\mathcal{O}_z}
\leq \lim_{\eps \to 0}  \int_{\Reg \Omega_0} [\alpha(1-u_\eps)](z) \frac{d\sigma_{\Reg \Omega_0}(z)}{\mbox{Vol }\mathcal{O}_z} <\infty,$$
which means that
\bq\label{eq:fin}
\int_{\Reg \Omega_0} \alpha(z) \frac{d\sigma_{\Reg \Omega_0}(z)}{\mbox{Vol }\mathcal{O}_z} <\infty  \quad \forall \alpha\in\CT(\rnn,\R_+).
\eq
In particular, if $\alpha$ is taken to be equal  $1$ on the compact set $K$ specified in \eqref{eq:suppsigma},  we obtain
\bq\label{intconv}
\int_{\Reg \Omega_0} |\sigma_\mu(z)| \frac{d\sigma_{\Reg \Omega_0}(z)}{\mbox{Vol }\mathcal{O}_z} 
\leq C \int_{\Reg \Omega_0} \alpha(z) \frac{d\sigma_{\Reg \Omega_0}(z)}{\mbox{Vol }\mathcal{O}_z}
<\infty
\eq
for some $C>0$. Now,  by   Lemmata \ref{lemlim} and \ref{formulaint},
\bq\label{eq:eureka}
L_0(\mu)=\frac 1 {(2\pi)^n}  [{\rho_\chi}_{|H_0}:\1]  \lim_{\eps \to 0}
   \int_{\Reg \Omega_0} [\sigma_\mu(1-u_\eps)](z) \frac{d\sigma_{\Reg \Omega_0}(z)}{\mbox{Vol }\mathcal{O}_z} .
\eq
Since  (\ref{intconv}) implies that the integrand in \eqref{eq:eureka} has an integrable majorant for arbitrary $\eps$, we can apply the Lebesgue Theorem of bounded convergence to obtain 
\bqn
L_0(\mu)= \frac 1 {(2\pi)^n}   [{\rho_\chi}_{|H_0}:\1] \int_{\Reg \Omega_0} \sigma_\mu(z) \frac{d\sigma_{\Reg \Omega_0}(z)}{\mbox{Vol }\mathcal{O}_z}.
\eqn
This completes the proof of Proposition \ref{computhess}.\\\\

 So far we have shown that $\tr P_\chi\E_\lambda= d_\chi \lambda^{n/2m} I(\lambda^{-1/2m})+O(1)$, where
\bq
\label{eq:36a}
I(\mu)=\frac {\mu^{\kappa}} {(2\pi)^{n-\kappa}} \, [{\rho_\chi}_{|H_0}:\1]
   \int_{\Reg \Omega_0} \sigma_\mu(z) \frac{d\sigma_{\Reg \Omega_0}(z)}{\mbox{Vol }\mathcal{O}_z} \, +O(\mu^{\kappa+1-2\delta}),
\eq
$\delta \in (1/4,1/2)$, and $\sigma_\mu=[( \eta_{\lambda,-2}^2 \chi_\lambda)^2 ( 3- 2 \eta_{\lambda,-2}^2\chi_\lambda) + f_\lambda]\circ \Psi_\mu^{-1}$ with $\lambda=\mu^{-2m}$. In particular, the last integral exists, and is finite, so that in order to finish the computation of the leading term in the asymptotic expansion for $\tr P_\chi\E_\lambda$, we are left with the task of examining the latter integral.  To characterize the support of $\sigma_\mu$,  let us introduce  the  sets  \begin{align*}
 \label{eq:18}
 \begin{split}
 W_\lambda&=\mklm{(x,\xi) \in {\bf{X}} \times \R^n: a_\lambda < 0}, \\
 A_{c,\lambda}&= \mklm{(x,\xi) \in {\bf{X}} \times \R^n: a_\lambda < c( h^{\delta-\omega}+d)},  \qquad B_{c,\lambda}= {\bf{X}} \times \R^n - A_{c,\lambda},\\
 D_c&=(\gd {\bf{X}} \times \R^n) (c,h^{-2 \delta}g), \\
 F_\lambda&=\mklm{(x,\xi)\in {\bf{X}} \times \R^n: \chi_\lambda=0 \quad \text{or}  \quad \eta_{\lambda,-2} =0 \quad \text{or} \quad \chi_\lambda=\eta_{\lambda,-2} =1 },\\
 {\mathcal{RV}}_{c,\lambda}&=\mklm{(x,\xi) \in {\bf{X}} \times \R^n: | a_\lambda|< c(h^{\delta-\omega} + d)}\cup \mklm {(x,\xi) \in D_c: x \in {\bf{X}}, \, a_\lambda < c (h^{\delta -\omega} +d)}.
  \end{split}
  \end{align*}
 Note that $D_c=\mklm{ (x,\xi)\in \R^{2n}: \dist (x,\gd {\bf{X}}) <  \sqrt c\big (1 + |x|^2+|\xi|^2\big )^{-\delta/2}}$, since for
 \bdm
 h^{-2\delta}(x,\xi)g_{(x,\xi)} ( x-y, \xi - \eta)= (1+|x|^2 + |\xi|^2)^\delta \Big [ \frac { |\xi-\eta|^2}{1 +  | x|^2 +|\xi|^2} + |  x -y|^2 \Big ] < c
 \edm
to hold  for some $(y,\eta) \in \gd {\bf{X}} \times \R^n $,  it is  necessary and sufficient  that  $|x-y|^2(1 +|x|^2+|\xi|^2)^\delta<c$ is satisfied for some $y \in \gd {\bf{X}}$.
\begin{lemma}\label{lemma:11}
For sufficiently large $c>0$ one has
\begin{enumerate}
\item[(i)] $\supp f_\lambda  \subset {\mathcal{RV}}_{c,\lambda}\subset A_{c,\lambda}$;
\item[(ii)] $\supp (\eta_{\lambda,-2}^2 \chi_\lambda )^2( 3- 2 \eta_{\lambda,-2}^2\chi_\lambda) \subset A_{c,\lambda}$;
\item[(iii)]  $(\eta_{\lambda,-2}^2 \chi_\lambda )^2( 3- 2 \eta_{\lambda,-2}^2\chi_\lambda)=1$  on $W_\lambda \cap \complement _{{\bf{X}}\times \R^{n}}  {\mathcal{RV}}_{c,\lambda}$.
\end{enumerate}
\end{lemma}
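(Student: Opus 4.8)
The plan is to reduce all three assertions to elementary bookkeeping with the cut-off functions $\tilde\chi$, $\tilde\eta_c$ and $\psi_{\lambda,c}$, exactly as in the corresponding lemma of Part~I; the compact group $G$ plays no role here. First I would record the facts to be used repeatedly: $\tilde\chi(s)=1$ for $s<0$ and $\tilde\chi(s)=0$ for $s>1$; $\tilde\eta_{2c}\equiv1$ on $\supp\tilde\eta_c$ and $\supp\tilde\eta_c\subset D_{2c}$; $\psi_{\lambda,1/c}\equiv1$ on $\supp\psi_{\lambda,1/2c}$ and $\supp\psi_{\lambda,c}\subset\M_\lambda(2c,h^{-2\delta}g)$; the fact that $\tilde\eta_2\equiv1$ on the open neighbourhood $D_2$ of $\gd{\bf X}\times\R^n$, so that $\supp\eta_{\lambda,-2}$, and hence $\supp\gd\eta_{\lambda,-2}$, are contained in ${\bf X}\times\R^n$; and the elementary inequality $h^\delta\le h^{\delta-\omega}$, valid since $h\le1$ and $\delta>\delta-\omega=2\delta-1/2$, which forces $\supp\chi_\lambda\subset\{a_\lambda\le h^\delta-4h^{\delta-\omega}-8C_0d\}\subset\{a_\lambda<c(h^{\delta-\omega}+d)\}$ for every $c\ge0$.

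Granting these, (ii) is immediate: one has $\supp\bigl((\eta_{\lambda,-2}^2\chi_\lambda)^2(3-2\eta_{\lambda,-2}^2\chi_\lambda)\bigr)\subset\supp\eta_{\lambda,-2}\cap\supp\chi_\lambda\subset({\bf X}\times\R^n)\cap\{a_\lambda<c(h^{\delta-\omega}+d)\}=A_{c,\lambda}$. For (iii) I would observe that on $W_\lambda\cap\complement_{{\bf X}\times\R^n}\mathcal{RV}_{c,\lambda}$ we have $x\in{\bf X}$ and $a_\lambda<0$, and that — precisely because $a_\lambda<0$ — the condition $(x,\xi)\notin\mathcal{RV}_{c,\lambda}$ forces simultaneously $a_\lambda\le-c(h^{\delta-\omega}+d)$ (from the first component of $\mathcal{RV}_{c,\lambda}$) and $(x,\xi)\notin D_c$ (from the second). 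Choosing $c>\max(4,8C_0)$, this gives $a_\lambda<-4h^{\delta-\omega}-8C_0d$, hence $\chi_\lambda=1$; it gives $(x,\xi)\notin D_c\supset\supp\tilde\eta_2$, hence $\tilde\eta_2=0$; and it gives $a_\lambda<0<4h^{\delta-\omega}+8C_0d$, so that $(x,\xi)\in\M_\lambda\subset\M_\lambda(1/2,h^{-2\delta}g)$ and $\psi_{\lambda,1/2}=1$. Thus $\eta_{\lambda,-2}^2\chi_\lambda=1$ on this set, and since $t^2(3-2t)=1$ at $t=1$, assertion (iii) follows.

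For (i), the inclusion $\mathcal{RV}_{c,\lambda}\subset A_{c,\lambda}$ is immediate from the definitions, both components of $\mathcal{RV}_{c,\lambda}$ lying in ${\bf X}\times\R^n$ and forcing $a_\lambda<c(h^{\delta-\omega}+d)$. For $\supp f_\lambda\subset\mathcal{RV}_{c,\lambda}$ I would first recall, from the asymptotic expansion underlying \eqref{eq:23b} in Part~I, that $f_\lambda$ is a finite sum of terms, each a product of finitely many factors of the form $\gd^\gamma\eta_{\lambda,-2}$ or $\gd^\gamma\chi_\lambda$, with at least one multiindex $|\gamma|\ge1$ occurring and at least one $\eta_{\lambda,-2}$-factor present, the genuinely non-local contributions of the Weyl composition being absorbed into the Schwartz-class remainder $r_\lambda$. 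Consequently $\supp f_\lambda\subset\supp\gd\eta_{\lambda,-2}\cup\bigl(({\bf X}\times\R^n)\cap\supp\gd\chi_\lambda\bigr)$, and it remains to place each of these pieces inside $\mathcal{RV}_{c,\lambda}$. On $\supp\gd\chi_\lambda$ one has $0\le(a_\lambda+4h^{\delta-\omega}+8C_0d)h^{-\delta}\le1$, hence $|a_\lambda|<c(h^{\delta-\omega}+d)$ for $c$ large, which puts $({\bf X}\times\R^n)\cap\supp\gd\chi_\lambda$ into the first component of $\mathcal{RV}_{c,\lambda}$. Writing $\gd\eta_{\lambda,-2}=-\psi_{\lambda,1/2}\,\gd\tilde\eta_2+(1-\tilde\eta_2)\,\gd\psi_{\lambda,1/2}$ on ${\bf X}\times\R^n$, the first summand is supported in $\supp\gd\tilde\eta_2\cap\supp\psi_{\lambda,1/2}\subset D_c\cap\M_\lambda(1,h^{-2\delta}g)$ and the second in $\supp\gd\psi_{\lambda,1/2}\subset\overline{\M_\lambda(1,h^{-2\delta}g)}\setminus\M_\lambda(1/2,h^{-2\delta}g)$; for $c$ large both of these lie in $\mathcal{RV}_{c,\lambda}$.

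The step I expect to be the main obstacle is this last one, namely controlling the metric thickenings $D_c$ and $\M_\lambda(c,h^{-2\delta}g)$ by sublevel sets of $a_\lambda$ — concretely, showing $\M_\lambda(1,h^{-2\delta}g)\subset\{a_\lambda<c(h^{\delta-\omega}+d)\}$ for suitable $c$, and that on $\complement\M_\lambda(1/2,h^{-2\delta}g)$ one has $a_\lambda\ge0$. This rests on the fact that $a_\lambda$, $h^{\delta-\omega}$ and $d$ are slowly varying with respect to the metric $h^{-2\delta}g$, a property established in Part~I (Lemma~10) in the course of proving $\chi_\lambda,\psi_{\lambda,c}\in S(h^{-2\delta}g,1)$ uniformly in $\lambda$; I would simply invoke it. Everything else is routine manipulation of the supports of the cut-off functions, and the parameter $c$ in the statement is taken to be larger than all the finitely many thresholds arising above.
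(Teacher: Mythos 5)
Your proof is essentially correct, and for parts (ii) and (iii) it is in the same spirit as the paper's, but the route you take for (i) is noticeably more self-contained than what appears in print. The paper's proof is almost entirely a citation: it invokes Part~I, Equation~(51) for $\supp f_\lambda\subset\complement_{{\bf X}\times\R^n}F_\lambda$, and Part~I, Lemma~16 for the chain of inclusions $\complement_{{\bf X}\times\R^n}F_\lambda\subset A_{c,\lambda}\cap\complement_{{\bf X}\times\R^n}E_\lambda\subset\mathcal{RV}_{c,\lambda}$, where $E_\lambda=\{(x,\xi)\in{\bf X}\times\R^n:(x,\xi)\notin D_4,\ a_\lambda<-4h^{\delta-\omega}-8C_0d\}$; parts (ii) and (iii) are then immediate because $\complement_{{\bf X}\times\R^n}\mathcal{RV}_{c,\lambda}\subset B_{c,\lambda}\cup E_\lambda$, $W_\lambda\cap B_{c,\lambda}=\emptyset$, $E_\lambda\subset\{\chi_\lambda=\eta_{\lambda,-2}=1\}$, and $(\eta_{\lambda,-2}^2\chi_\lambda)^2(3-2\eta_{\lambda,-2}^2\chi_\lambda)$ vanishes on $B_{c,\lambda}$ because points of $B_{c,\lambda}$ lie outside $\M_\lambda(1,h^{-2\delta}g)$. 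You instead reconstruct (i) from the support structure of the Weyl-calculus terms: you control $\supp\gd\chi_\lambda$ directly from $\{0\le(a_\lambda+4h^{\delta-\omega}+8C_0d)h^{-\delta}\le1\}$ and split $\gd\eta_{\lambda,-2}$ into the $\gd\tilde\eta_2$-piece and the $\gd\psi_{\lambda,1/2}$-piece. This is a valid alternative, and it has the virtue of making visible exactly which estimate is load-bearing, namely that the metric thickenings $\M_\lambda(c,h^{-2\delta}g)$ and $D_c$ can be trapped in $a_\lambda$-sublevel sets — which is precisely the content of the Part~I slowly-varying lemma that you correctly flag and invoke. Your argument for (ii), via $\supp\chi_\lambda\subset\{a_\lambda\le h^\delta-4h^{\delta-\omega}-8C_0d\}$ together with $\supp\eta_{\lambda,-2}\subset{\bf X}\times\R^n$, is actually cleaner than the paper's, which goes through $\M_\lambda(1,h^{-2\delta}g)$. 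Two small caveats: your description of $f_\lambda$ as a ``finite sum of products of $\gd^\gamma\eta_{\lambda,-2}$ and $\gd^\gamma\chi_\lambda$ with at least one $\eta_{\lambda,-2}$-factor'' is a plausible but unverified paraphrase of the Part~I expansion underlying \eqref{eq:23b} — the paper sidesteps this by quoting the support statement $\supp f_\lambda\subset\complement_{{\bf X}\times\R^n}F_\lambda$ wholesale — and your appeal to ``Part~I, Lemma~10'' for the slowly-varying estimates is a reasonable guess, but the paper's proof actually points to Part~I, Lemma~16 for the relevant set inclusions. Neither affects the correctness of the argument.
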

\begin{proof}
As already explained in Part I, Equation (51), the support of $f_\lambda$ is contained in $\complement_{{\bf{X}} \times \R^n}  F_\lambda$, the complement of $F_\lambda$ in ${\bf{X}} \times \R^n$. Furthermore, for sufficiently large $c>0$, the set $\complement_{{\bf{X}} \times \R^n} F_\lambda$ is contained in ${\mathcal{RV}}_{c,\lambda}$, which is a consequence of the inclusions
\bq
\label{eq:29}
\complement_{{\bf{X}} \times \R^n} F_\lambda \subset A_{c,\lambda} \cap \complement _{{\bf{X}}\times \R^n} E_\lambda \subset {\mathcal{RV}}_{c,\lambda},
\eq
where $E_\lambda= \mklm{ (x,\xi) \in {\bf{X}} \times \R^n:  (x,\xi) \not\in D_4, a_\lambda < -4h^{\delta-\omega} - 8 C_0 d }$, see  Part I, Lemma 16.  Next,  we note that $(\eta_{\lambda,-2}^2 \chi_\lambda )^2( 3- 2 \eta_{\lambda,-2}^2\chi_\lambda)(x,\xi)$ must be equal  $1$ on $W_\lambda \cap \complement _{{\bf{X}}\times \R^{n}}  {\mathcal{RV}}_{c,\lambda}$, since according to \eqref{eq:29} we have the inclusion $\complement_{{\bf{X}} \times \R^n}{\mathcal{RV}}_{c,\lambda} \subset B_{c,\lambda} \cup E_\lambda$, and hence $W_\lambda \cap \complement _{{\bf{X}}\times \R^{n}}  {\mathcal{RV}}_{c,\lambda} \subset E_\lambda \subset \mklm {(x,\xi) \in {\bf{X}} \times \R^n: \chi_\lambda =\eta_{\lambda,-2} =1}$, due to the fact that $W_\lambda \cap B_{c,\lambda}=\emptyset$. Furthermore, $(\eta_{\lambda,-2}^2 \chi_\lambda)^2 ( 3- 2 \eta_{\lambda,-2}^2 \chi_\lambda)(x,\xi)$ vanishes on $B_{c,\lambda}$, since for large $c$, $(x,\xi) \in B_{c,\lambda}$ implies $(x,\xi) \not \in \M_\lambda(1, h^{-2\delta}g)$, by the proof of the previous lemma.
\end{proof}
Consequently, by introducing the sets
 \begin{align*}
 \begin{split}
 \widetilde{W}_\mu=&\Psi_{\mu}(W_{\mu^{-2m}})=\mklm{(x,\xi) \in {\bf{X}} \times \R^n: b_{\mu^{-2m}} < 0}, \\
 \widetilde{A}_{c,\mu}=&\Psi_{\mu}(A_{c,\mu^{-2m}})= \mklm{(x,\xi) \in {\bf{X}} \times \R^n: b_{\mu^{-2m}} < c( h^{\delta-\omega}+d)\circ \Psi_\mu^{-1})}, \\
  \widetilde{B}_{c,\mu}=& {\bf{X}} \times \R^n - \widetilde{A}_{c,\mu},\\
 \widetilde{\mathcal{RV}}_{c,\mu}=&\Psi_{\mu}({\mathcal{RV}}_{c,\mu^{-2m}})=\mklm{(x,\xi) \in {\bf{X}} \times \R^n: |b_{\mu^{-2m}}|< c(h^{\delta-\omega} + d)\circ \Psi_\mu^{-1}}
 \\&\cup \mklm {(x,\xi) \in {\bf{X}} \times \rn: (x,\xi/\mu) \in D_c, \, b_{\mu^{-2m}} < c (h^{\delta -\omega} +d)\circ \Psi_\mu^{-1}},
  \end{split}
  \end{align*}
one sees that for all $\mu \in \R^+_\ast$
\bq
\label{eq:suppsigma}
\supp \sigma_\mu \subset \tilde A_{c,\mu} \subset K
\eq
for some sufficiently large $c>0$, and some suitable compact subset $K\subset \R^{2n}$. We proceed now to split the integral in \eqref{eq:36a} into the three integrals
\begin{align}
\label{eq:37c}
\begin{split}
\int_{\Reg \Omega_0\cap\widetilde{W}_\lambda} &\frac {d\sigma_{\Reg \Omega_0}(z)}{\vol \mathcal{O}_{z}}
-\int_{\Reg \Omega_0\cap\widetilde{W}_\lambda \cap\widetilde{\mathcal{RV}}_{c,\mu}} \frac {d\sigma_{\Reg \Omega_0}(z)}{\vol \mathcal{O}_{z}}  \\ &+
 \int_{\Reg \Omega_{0}\cap\widetilde{\mathcal{RV}}_{c,\mu}}
 \sigma_\mu(z)   \frac{\d\sigma_{\Reg \Omega_0}(z)}  {\vol \mathcal{O}_{z}},
\end{split}
\end{align}
where we made use of the fact that, since $W_\lambda, {\mathcal{RV}}_{c,\lambda}$ are contained in $ A_{c,\lambda}$, and  $\complement _{A_{c,\lambda}} {\mathcal{RV}}_{c,\lambda} \subset W_\lambda$, one has  $A_{c,\lambda} -W_\lambda \cap \complement _{{\bf{X}}\times \R^{n}}  {\mathcal{RV}}_{c,\lambda}= {\mathcal{RV}}_{c,\lambda} $.
 The next lemma will show that the main contribution to $L_0(\mu)$ is actually given by the first integral  in \eqref{eq:37c}, provided that we make the following  
\begin{assumption}\label{assump2}
There exists a constant $c>0$ such that for sufficiently small  $\rho>0$, $\vol (\gd {\bf{X}})_\rho  \leq c\rho$. Furthermore, $0 \not\in \gd {\bf{X}}$.
\end{assumption}
\begin{lemma}
\label{lemma:10}
Put 
\begin{align*}
 \widetilde{\mathcal{RV}}_{c,\mu}^{(1)}=&\mklm{(x,\xi) \in {\bf{X}} \times \R^n: | b|< c(h^{\delta-\omega} + d)\circ \Psi_\mu^{-1}}, \\
  \widetilde{\mathcal{RV}}_{c,\mu}^{(2)}=&\mklm {(x,\xi) \in {\bf{X}} \times \R^n: (x,\xi/\mu) \in D_c, \, b < c (h^{\delta -\omega} +d)\circ \Psi_\mu^{-1}},
\end{align*}
so that $ \widetilde{\mathcal{RV}}_{c,\mu}= \widetilde{\mathcal{RV}}_{c,\mu}^{(1)}\cup  \widetilde{\mathcal{RV}}_{c,\mu}^{(2)}$. Then, as $\mu \to 0$,
\begin{align*}
\int_{ \Reg \Omega_{0} \cap\widetilde{\mathcal{RV}}_{c,\mu}^{(1)}}  \frac{d\sigma_{\Reg \Omega_0}(z)}{\vol \Ocal_z}&=O(\mu^{2\delta -\frac 1 2}), \\
\int_{ \Reg \Omega_{0} \cap\widetilde{\mathcal{RV}}_{c,\mu}^{(2)}}  \frac{d\sigma_{\Reg \Omega_0}(z)}{\vol \Ocal_z}&=O(\mu^{\frac \delta {1+\delta}}), 
\end{align*}
for arbitrary $\delta \in (1/4,1/2)$.
\end{lemma}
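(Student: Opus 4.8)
The plan is to reduce both estimates, which concern the singular measure $\frac{d\sigma_{\Reg \Omega_0}}{\vol\Ocal_z}$ on the non-compact manifold $\Reg\Omega_0$, to elementary volume counts on the \emph{smooth} resolution variety built in Section 4, where that measure has a bounded density. Set $d\nu = \frac{d\sigma_{\Reg\Omega_0}}{\vol\Ocal_z}$ restricted to the compact set $K$ of \eqref{eq:suppsigma}; by \eqref{intconv} this is a finite measure, and it suffices to bound $\nu(\widetilde{\mathcal{RV}}^{(1)}_{c,\mu})$ and $\nu(\widetilde{\mathcal{RV}}^{(2)}_{c,\mu})$. First I would transport $\nu$ to the resolution in the chart $j=n$: combining the identity of Lemma \ref{formulaint} with the change of variables $\zeta_M\circ\tilde\phi_n$ and formula \eqref{coco}, $\nu(S)$ is, for any measurable $S$, a constant multiple of the integral of $\frac{\overline{\chi(k)}\,|\eta_n|^{\,n-\kappa-1}}{|\det\phr''|^{1/2}}$ over the pullback of $S$ in $\tilde{\mathcal{C}}^{\mathrm{st}}$. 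Since $|\det\phr''|^{-1/2}$ is bounded on the relevant compactum by Proposition \ref{phaseanalysis}, $|\overline{\chi(k)}|\le d_\chi$, and $\kappa\le n-1$ makes the exponent $n-\kappa-1$ nonnegative so that $|\eta_n|^{\,n-\kappa-1}$ is bounded as well, we get $\nu(S)\le C\,\sigma_{\tilde{\mathcal{C}}^{\mathrm{st}}}\big((\zeta_M\circ\tilde\phi_n)^{-1}(S)\big)$; the chart $j=n$ exhausts $\tilde{\mathcal{C}}^{\mathrm{st}}$ up to a set of measure zero, exactly as in the derivation of \eqref{eq:16}. It thus remains to estimate the volume of these pullbacks inside a smooth manifold.

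For the first estimate I would use that on $\{|\xi|>\mu\}$ the symbol $b_{\mu^{-2m}}$ coincides with the $\mu$-independent function $b_0(x,\xi)=(1+|\xi|^{-2m})^{-1}(1-a_{2m}(x,\xi)^{-1})$, and that ellipticity confines $\{\,|b_0|<c(h^{\delta-\omega}+d)\circ\Psi_\mu^{-1}\,\}$ to a fixed compact annulus in $\xi$; since $\delta-\omega=2\delta-1/2$, it follows that $\widetilde{\mathcal{RV}}^{(1)}_{c,\mu}\subset\{|\xi|<\mu^{\epsilon_0}\}\cup\{|b_0|<C\mu^{2\delta-1/2}\}$ for a suitable $\epsilon_0\in(0,1)$. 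The small ball $\{|\xi|<\mu^{\epsilon_0}\}$ contributes $\nu$-mass $O(\mu^{\epsilon_0(n-\kappa)})$ — a slab estimate for $|\eta_n|\lesssim\mu^{\epsilon_0}$ together with the bounded density — which is $O(\mu^{2\delta-1/2})$ once $\epsilon_0(n-\kappa)\ge 2\delta-1/2$. For the tube the key point is that $\{b_0=0\}\cap\Reg\Omega_0=\{a_{2m}=1\}\cap\Reg\Omega_0$ is a regular hypersurface of $\Reg\Omega_0$: if $\nabla a_{2m}(z)\in J\g z$ for some $z=(x,\xi)\in\Omega_0$, then comparing with Euler's relation $\eklm{\xi,\partial_\xi a_{2m}}=2m\,a_{2m}(x,\xi)$ and using $\eklm{Ax,\xi}=0$ for all $A\in\g$ would force $a_{2m}(x,\xi)=0$, impossible on $\{a_{2m}=1\}$; hence $da_{2m}$ does not vanish along $T_z\Reg\Omega_0$ there, and the same persists after pulling back to the resolution. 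Applying the coarea formula to $b_0$ on the smooth manifold, with $|\nabla b_0|$ bounded below near $\{b_0=0\}$ and the bounded density, yields $\nu(\{|b_0|<C\mu^{2\delta-1/2}\})=O(\mu^{2\delta-1/2})$, and the first estimate follows.

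For the second estimate, after the same transport the set $\widetilde{\mathcal{RV}}^{(2)}_{c,\mu}$ is contained in $\{\,\dist(R_0x,\gd{\bf X})<\sqrt c\,(1+|x|^2+|\xi|^2/\mu^2)^{-\delta/2},\ |\xi|\lesssim 1\,\}$ with $\xi=\eta_n(\eta',1)$. I would split this according to $|\xi|\ge\mu^{t}$ or $|\xi|<\mu^{t}$ for a parameter $t\in(0,1)$. On $\{|\xi|\ge\mu^{t}\}$ the boundary condition forces $\dist(R_0x,\gd{\bf X})<C\mu^{(1-t)\delta}$; feeding this into Assumption \ref{assump2} — together with a Fubini/coarea argument in the $x$-variable for the submersion onto $\R^n_x$, whose Jacobian is controlled off a negligible lower-dimensional set — bounds the contribution by $O(\mu^{(1-t)\delta})$, where $0\notin\gd{\bf X}$ keeps the boundary tube away from the possible singularity $(0,0)$ of $\Omega_0$. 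On $\{|\xi|<\mu^{t}\}$ one has $\nu$-mass $O(\mu^{t(n-\kappa)})$, by the slab estimate for $|\eta_n|\lesssim\mu^{t}$. Choosing $t$ with $(1-t)\delta=t$, i.e. $t=\delta/(1+\delta)$, balances the two regimes and gives $O(\mu^{\delta/(1+\delta)})$.

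The main obstacle is precisely the behaviour of the weight $1/\vol\Ocal_z$ near $\Sing\Omega_0$, where it is unbounded and $\Reg\Omega_0$ fails to be closed, so that neither a tubular-neighbourhood nor a coarea argument is directly available on $\Reg\Omega_0$ itself. Passing to the resolution variety dissolves this difficulty, since there the Jacobian factor $|\eta_n|^{\,n-\kappa-1}$ (with $n-\kappa-1\ge 0$) and the transversal Hessian determinant $|\det\phr''|^{-1/2}$ are genuinely bounded on compacta; what then remains is routine volume counting — coarea against a regular hypersurface for the first estimate, Assumption \ref{assump2} plus the dyadic split in $|\xi|$ for the second. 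The only further care needed is to verify that the exceptional locus and the charts $j\le r$ enter only through sets of measure zero, which is immediate since those charts are relevant only over $\xi\in F$, a set of positive codimension in $\Reg\Omega_0$.
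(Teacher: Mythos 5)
Your proposal reaches the same conclusion by the same decomposition strategy the paper uses (small--$\xi$ ball, boundary tube estimated via Assumption \ref{assump2}, energy shell tube confined to a fixed annulus by ellipticity, exponents then balanced), but it carries out the volume counts in a different coordinate frame. The paper works on $\Reg\Omega_0$ directly with the polar parametrization $\xi=s\eta$, $\eta\in S^{n-1}_{(H_0)}$, $x=r\vartheta\in N_\xi\Ocal_\xi$ (and then swaps the roles of $x$ and $\xi$ for the energy shell, where it finishes with the explicit substitution $\varsigma=s^{-2m}/a_{2m}(x,\eta)$); you instead transport $d\sigma_{\Reg\Omega_0}/\vol\Ocal_z$ to the resolution $\tilde{\mathcal C}^{\rm st}$ using Lemma \ref{formulaint} and \eqref{coco}, and then do a coarea argument against $b_0$ for the energy shell, justifying the regularity of $\{a_{2m}=1\}\cap\Reg\Omega_0$ via Euler's relation and the orthogonality $\eklm{Ax,\xi}=0$ --- that verification is correct and is in fact what the paper's $\varsigma$-substitution implicitly relies on. Both routes are legitimate; yours is conceptually tidier in that it never leaves the resolution framework, whereas the paper's is more self-contained once the finiteness \eqref{eq:fin} has been secured.

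Two points in your sketch deserve a bit more care. First, the asserted inequality $\nu(S)\le C\,\sigma_{\tilde{\mathcal C}^{\rm st}}\bigl((\zeta_M\circ\tilde\phi_n)^{-1}(S)\bigr)$ with a uniform $C$ is not automatic, because the pullbacks of the sets $\widetilde{\mathcal{RV}}^{(i)}_{c,\mu}$ into the chart $j=n$ are not compact in $\eta'$: the constraint $|\eta_n(\eta',1)|\lesssim 1$ lets $|\eta'|$ grow like $1/|\eta_n|$, and one has to check that $|\det\psi_{wk}''|^{-1/2}$ stays controlled on this unbounded set. This is exactly the integrability that underlies \eqref{coco}, so it is true, but it should be stated rather than folded into "bounded density on the relevant compactum". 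Second, the phrase "a slab estimate for $|\eta_n|\lesssim\mu^{\epsilon_0}$" would, taken literally, give only $O(\mu^{\epsilon_0})$ --- the $|\eta_n|^{n-\kappa-1}$ Jacobian is exactly cancelled by the $O\bigl((1/|\eta_n|)^{n-1-\kappa}\bigr)$ growth of the $\eta'$-fibre --- whereas your quoted rate $O(\mu^{\epsilon_0(n-\kappa)})$ uses that the pulled-back \emph{ball} $\{|\eta_n||(\eta',1)|<\mu^{\epsilon_0}\}$ has a cross-section that shrinks with $\eta_n$, not a slab of fixed width. This distinction matters if one chooses $\epsilon_0=(2\delta-1/2)/(n-\kappa)$ as your wording suggests; with the slab bound $O(\mu^{\epsilon_0})$ that choice does not reach $O(\mu^{2\delta-1/2})$ when $n-\kappa>1$. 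Taking $\epsilon_0=2\delta-1/2$ (which the paper does, via $\epsilon_1=\delta-\omega$) and $t=\delta/(1+\delta)$ makes the argument go through under either reading, so the conclusion stands, but the exponent bookkeeping is worth tightening.
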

\begin{proof} Let $\1_A$ denote the characteristic function of the set $A$.
As already noted, $\Omega_0$ is  homogeneous in $x$ and $\xi$, meaning that $(x,\xi) \in \Omega_0$ implies $(sx,t\xi) \in \Omega_0$ for all $s,t \in \R$. Furthermore, by Lemma \ref{lemma:parametr}, $\{(x,\xi) \in \Reg \Omega_0: \xi \in \Sing \rn\}$  is a subset of measure zero in $\Reg \Omega_0$. Consequently, we can  parametrize $\Reg \Omega_0$ up to a set of measure zero as follows. Take $z=(x,\xi) \in \Omega_0$, $\xi \in \rn_{(H_0)}$, and let $\xi =s\eta$,  $x=r\vartheta$ be polar coordinates in $\R^{n}$, and $N_\xi \Ocal_\xi$, respectively,  where $r,s >0$, and $\eta \in S^{n-1}$, $\vartheta \in S^{n-\kappa-1}$. In this coordinates one computes then
\begin{gather}
\label{eq:21a}
\begin{split}
\int_{ \Reg \Omega_{0} \cap\widetilde{\mathcal{RV}}_{c,\mu}}  \frac{d\sigma_{\Reg \Omega_0}(z)}{\vol \Ocal_z}=\int_{\rn_{(H_0)}} \Big ( \int_{N_\xi\Ocal_\xi} \1_{ \widetilde{\mathcal{RV}}_{c,\mu}}(x,\xi) \frac{d\sigma_{N_\xi\Ocal_\xi}(x)}{\vol \Ocal_{(x,\xi)}} \Big ) d\xi
\\ =\int_{0}^\infty \int_{S^{n-1}_{(H_0)}} \Big (
 \int_0^\infty  \int_{N^1_{s\eta} \Ocal_{s\eta}}  \1_{ \widetilde{\mathcal{RV}}_{c,\mu}}(r\vartheta,s\eta)  s^{n-1} r^{n-\kappa-1} \frac{ dr \, d\vartheta }{\vol \Ocal_{(r\vartheta,s\eta)}} \Big )  \,ds \,d\eta,
\end{split}
\end{gather}
since
\bqn
\sqrt{\det g_{|\Reg \Omega_0}(r,s,\vartheta,\eta)}=s^{n-1} r^{n-\kappa-1} d\vartheta \, d\eta,
\eqn
where $ g_{|\Reg \Omega_0}$ denotes the induced metric on $\Reg \Omega_0$, and $d\eta$ and $d\vartheta$ are the volume elements of $S^{n-1}$ and $N^1_\xi \Ocal_\xi=\{v \in N_\xi \Ocal_\xi: \norm{v}=1\} $, respectively. Note that \eqref{eq:fin} implies that 
\bqn
\int_{N_\xi\Ocal_\xi} \1_{ \widetilde{\mathcal{RV}}_{c,\mu}}(x,\xi) \frac{d\sigma_{N_\xi\Ocal_\xi}(x)}{\vol \Ocal_{(x,\xi)}}
\eqn
is $\L^1$-integrable on $\rn_{(H_0)}$ as a function of $\xi$. Now, the condition $b(x,\xi) <  c(h^{\delta-\omega}+d)(x,\xi/\mu)$ implies that $|\xi| < c_1$, see Part I, equation (60); here, and in what follows, $c_i>0$ will denote positive constants. Hence,
\begin{align*}
 \widetilde{\mathcal{RV}}_{c,\mu}^{(2)}\subset &\mklm{(x,\xi) \in {\bf{X}} \times \R^n: c_0 \mu^{\epsilon_2} \leq |\xi| \leq c_1 ,\, \dist(x, \gd {\bf{X}}) < c_2 |\xi|^{-\delta} \mu^\delta}\\
  \cup&\mklm{ (x,\xi) \in {\bf{X}}\times \R^n: |\xi| < c_0\mu^{\epsilon_2} }\\
  \subset &[ (\gd {\bf{X}})_{c_3 \mu^{\delta (1-\epsilon_2)}} \times B^n(c_1)] \cup [{\bf{X}}\times B^n(c_0 \mu^{\epsilon_2})],
\end{align*}
where $B^n(\rho)$ denotes the ball of radius $\rho$ in n-dimensional Euclidean space, and $1>\epsilon_2>0$ will be chosen later. On the other hand,  the proof of Lemma 18 in Part I implies that, for small $\mu$, and some $0 < \epsilon_1<1$ to be specified later,
\begin{align*}
 \widetilde{\mathcal{RV}}_{c,\mu}^{(1)}\subset &\mklm{(x,\xi) \in {\bf{X}} \times \R^n: c_4 \leq |\xi| \leq c_1 ,\,  \big |  1-1/a_{2m}(x,\xi) \big |  \leq c_5 \mu^{\delta-\omega}}\cup [ {\bf{X}} \times B^n(\mu^{\epsilon_1})].
\end{align*}
Now, using the parametrization of $\Reg \Omega_0$ specified above, one sees that for small $\rho>0$ 
\begin{gather*}
\int_{ \Reg \Omega_{0} \cap[ {\bf{X}} \times B^n(\rho)]}  \frac{d\sigma_{\Reg \Omega_0}(z)}{\vol \Ocal_z}=
\int_{0}^{\rho }   \int_{S^{n-1}_{(H_0)}} \Big (
  \int_{N_{s\eta} \Ocal_{s\eta}}  \1_{\bf X}(x)\frac{ d\sigma_{N_{s\eta}\Ocal_{s\eta}}(x)}{\vol \Ocal_{(x,s\eta)}} \Big )   s^{n-1}  \,ds \,d\eta= O(\rho),
\end{gather*}
where we took into account took that $\vol \Ocal_{(x,s\eta)}$ is at most of order $s^\kappa$ for small $s$, and $\kappa\leq n-1$. 
Therefore, the restriction of the integral \eqref{eq:21a}  to $\Reg \Omega_{0} \cap
 \widetilde{\mathcal{RV}}_{c,\mu}^{(1)}$ can be estimated from above by 
\begin{gather*}
\int_{ \Reg \Omega_{0}}  \1_{\mklm{(x,\xi) \in {\bf{X}} \times \R^n: c_4 \leq |\xi| \leq c_1 ,\,  \big |  1-1/a_{2m}(x,\xi) \big |  \leq c_5 \mu^{\delta-\omega}}}(z) \frac{d\sigma_{\Reg \Omega_0}(z)}{\vol \Ocal_z}
 + O(\mu^{\epsilon_1}).
 \end{gather*}
 Now, by letting $x \in \rn_{(H_0)}$, $\xi \in N_x \Ocal_x$, and interchanging the roles of $x$ and $\xi$, we obtain  
 \begin{gather*}
\int_{{\bf{X}}\cap \rn_{(H_0)}} \Big (
 \int_{c_4}^{c_1}   \int_{N^1_{x} \Ocal_{x}}  \1_{\mklm{(x',\xi):   \big |  1-1/a_{2m}(x',\xi) \big |  \leq c_5 \mu^{\delta-\omega}}}(x,s\eta) \frac{s^{n-\kappa-1}  \,ds \,d\eta }{\vol \Ocal_{(x,s\eta)}} \Big ) \,dx \\
= \int_{{\bf{X}}\cap \rn_{(H_0)}} \Big (
 \int _{\mklm{\varsigma: |\varsigma-1| \leq c_5 \mu^{\delta-\omega}}}
  \int_{N^1_{x} \Ocal_{x}}  \varsigma^{-1} \Big (  \frac 1 {\varsigma a_{2m}(x,\eta)}\Big )^{\frac{n-\kappa}{2m}}
     \frac{ \1_{[c_1,c_4]} ( (\varsigma a_{2m}(x,\eta))^{-\frac 1{2m}}) d\varsigma \,d\eta   }{\vol \Ocal_{(x, (\varsigma a_{2m}(x,\eta))^{-1/2m} \eta)}} \Big ) \,dx\\
   \leq c_6  \int_{\mklm{\varsigma: |\varsigma-1| \leq c_5 \mu^{\delta-\omega}}} d\varsigma=O(\mu^{\delta-\omega}),
 \end{gather*}
where  we made the change of variables $\varsigma= |\xi|^{-2m} / a_{2m}(x, \xi/ |\xi|)=s^{-2m}/ a_{2m}(x,\eta)$, and used the fact that $(1+z)^\beta-(1-z)^\beta=O(|z|)$ for arbitrary $z \in \C$, $|z| <1$, and $\beta \in \R$. Note that due to the ellipticity condition \eqref{ellip1}, $a_{2m}(x,\eta)$ is positive for $x\in {\bf{X}}$.
Putting  $\epsilon_1=\delta-\omega=2\delta -1/2$ therefore yields 
\bqn
\int_{ \Reg \Omega_{0} \cap\widetilde{\mathcal{RV}}^{(1)}_{c,\mu}}  \frac{d\sigma_{\Reg \Omega_0}(z)}{\vol \Ocal_z} =O(\mu^{2\delta-\frac 1 2}).
\eqn
Similarly, for small $\mu$, the restriction of the integral \eqref{eq:21a} to $\Reg \Omega_{0}\cap  \widetilde{\mathcal{RV}}_{c,\mu}^{(2)}$ can be estimated from above by 
\begin{align*}
\int_{ \Reg \Omega_{0}}  &\1_{[ (\gd {\bf{X}})_{c_3 \mu^{\delta (1-\epsilon_2)}} \times B^n(c_1)] }(z) \frac{d\sigma_{\Reg \Omega_0}(z)}{\vol \Ocal_z}
 + O(\mu^{\epsilon_2})\\
&=\int_{\rn_{(H_0)} \cap(\gd {\bf{X}})_{c_3 \mu^{\delta (1-\epsilon_2)}}} \Big ( \int_{N_x \Ocal_x \cap B^n(c_1)} \frac{d\sigma_{N_x\Ocal_x}(\xi)}{\vol \Ocal_{(x,\xi)}} \Big ) dx +O(\mu^{\eps_2})\\
&\leq c_7\,  \vol  (\gd {\bf{X}})_{c_3 \mu^{\delta (1-\epsilon_2)}} +O(\mu^{\eps_2})
= O(\mu^{\delta(1-\epsilon_2)}) +O(\mu^{\epsilon_2})=O(\mu^{\frac\delta{1+\delta}})
 \end{align*}
by Assumption \ref{assump2}, where we put $\epsilon_2 =\delta/(1+\delta)$, and  took into account that, since $0 \not\in \gd {\bf{X}}$, the integrand of the last integral over $x$ is bounded on $\rn_{(H_0)}\cap (\gd {\bf{X}})_{c_3 \mu^{\delta (1-\epsilon_2)}}$ by some constant independent of $\mu$.  The assertion of the lemma now follows.
\end{proof}
Now, for $x \in {\bf{X}}$, $|\xi| > \mu$, the condition $b_{\mu^{-2m}} (x,\xi) < 0$ is equivalent to $a_{2m}(x,\xi) <1$, due to the ellipticity condition \eqref{ellip1}. By using arguments similar to those given in the  proof of the previous lemma one therefore computes
\begin{align}
\label{eq:37d}
\begin{split}
\int_{\Reg \Omega_0\cap\widetilde{W}_\mu} \frac {d\sigma_{\Reg \Omega_0}(z)}{\vol \mathcal{O}_{z}} &\leq
\int_{\Reg \Omega_0\cap [{\bf{X}} \times B^n(\mu)]} \frac {d\sigma_{\Reg \Omega_0}(z)}{\vol \mathcal{O}_{z}}+\int_{\Reg \Omega_0} \1_{(-\infty, 1]}(a_{2m}(z)) \frac {d\sigma_{\Reg \Omega_0}(z)}{\vol \mathcal{O}_{z}}\\
&= O(\mu) +\int_{\Reg \Omega_{0}/G} \1_{(-\infty, 1]}(a_{2m}([z]) d\sigma_{\Reg \Omega_0/G}([z]) \\&=O(\mu)+\vol([a_{2m}^{-1}((-\infty, 1])\cap \Reg \Omega_{0}]/G),
\end{split}
\end{align}
where we took into account Equation (3.37) in \cite{cassanas}. Here the latter volume is defined in the sense of  \cite{gallot}, Section 3.H.2.
This finishes the computation of the leading term. Collecting everything together, we obtain
\begin{proposition}\label{prop:5} 
As $\lambda \to +\infty$, one has
 \bqn
 \Big | \tr P_\chi \E_\lambda -   \frac {d_\chi [\rho_{\chi|H_0}:1]}{(2\pi)^{n-\kappa}}  \vol([a_{2m}^{-1}((-\infty, 1])\cap \Omega_{0}]/G)  \,    \lambda^{(n-\kappa)/2m} \Big | = O(\lambda^{(n-\kappa-1/4)/2m}),
 \eqn
 Furthermore, a similar result  holds for the trace of $(P_\chi \E_\lambda)^2$, too.
 \end{proposition}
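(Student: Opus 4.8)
The plan is to assemble the pieces established in this section. Recall that $\tr P_\chi\E_\lambda = d_\chi\lambda^{n/2m} I(\lambda^{-1/2m}) + O(1)$, that $I(\mu)$ obeys \eqref{eq:36a} with $\mu = \lambda^{-1/2m}$, and that the integral over $\Reg\Omega_0$ appearing there decomposes as in \eqref{eq:37c} into a main term plus two terms supported in $\widetilde{\mathcal{RV}}_{c,\mu}$. Since $\sigma^\tau(\E_\lambda)\in S(h^{-2\delta}g,1)$ uniformly in $\lambda$, the amplitude $\sigma_\mu$ is bounded uniformly in $\mu$, so each of the latter two terms is, in absolute value, at most a constant times $\int_{\Reg\Omega_0}\1_{\widetilde{\mathcal{RV}}_{c,\mu}}(z)\,d\sigma_{\Reg\Omega_0}(z)/\vol\mathcal{O}_z$, which by Lemma \ref{lemma:10} is $O(\mu^{2\delta-1/2}) + O(\mu^{\delta/(1+\delta)})$. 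The main term is treated in \eqref{eq:37d}: the ellipticity condition \eqref{ellip1} identifies $\widetilde W_\mu$ with $\{a_{2m}<1\}$ outside the negligible slab $|\xi|\leq\mu$, and together with the completely analogous lower bound this shows that the main term equals $\vol([a_{2m}^{-1}((-\infty,1])\cap\Reg\Omega_0]/G) + O(\mu)$, which by definition of the volume occurring in Theorem \ref{thm:1} is $\vol([a_{2m}^{-1}((-\infty,1])\cap\Omega_0]/G) + O(\mu)$.

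Consequently the integral in \eqref{eq:36a} equals $\vol([a_{2m}^{-1}((-\infty,1])\cap\Omega_0]/G)$ up to an error $O(\mu^{1-2\delta}) + O(\mu^{2\delta-1/2}) + O(\mu^{\delta/(1+\delta)}) + O(\mu)$, where $O(\mu^{1-2\delta})$ is the remainder already present in \eqref{eq:36a}. The first two error powers are balanced by taking $\delta = 3/8\in(1/4,1/2)$: then $1-2\delta = 2\delta-1/2 = 1/4$, while $\delta/(1+\delta) = 3/11 > 1/4$, so after multiplication by the prefactor $\mu^\kappa/(2\pi)^{n-\kappa}$ the total error is $O(\mu^{\kappa+1/4})$. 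Multiplying through by $d_\chi\lambda^{n/2m}$ and using $\lambda^{n/2m}\mu^\kappa = \lambda^{(n-\kappa)/2m}$ and $\lambda^{n/2m}\mu^{\kappa+1/4} = \lambda^{(n-\kappa-1/4)/2m}$ gives the asserted asymptotics for $\tr P_\chi\E_\lambda$. I expect the only genuine subtlety here to be combinatorial rather than analytic: the two tail integrals in \eqref{eq:37c} can only be bounded, not evaluated, since $\sigma_\mu$ is merely bounded and pointwise convergent rather than a characteristic function, so one has to be sure that the uniform symbol bound together with the two decay rates of Lemma \ref{lemma:10} suffice, and to follow the competition between the exponents $1-2\delta$, $2\delta-1/2$ and $\delta/(1+\delta)$ carefully enough to see that $\delta = 3/8$ is optimal.

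For the trace of $(P_\chi\E_\lambda)^2$ the same scheme applies with essentially no change. As $P_\chi$ commutes with $\E_\lambda$ and $P_\chi^2 = P_\chi$, one has $(P_\chi\E_\lambda)^2 = P_\chi\E_\lambda^2$, and by the composition calculus $\E_\lambda^2$ again lies in $\Pi^0_{1-\delta,\delta}(\rn)$ with left symbol in $S(h^{-2\delta}g,1)$ uniformly in $\lambda$ and supported, up to lower order terms, in the same $\lambda$-dependent regions as the symbol of $\E_\lambda$; moreover the corresponding rescaled amplitude converges, as $\mu\to 0^+$, to $\1_{\{a_{2m}\leq 1\}}^2 = \1_{\{a_{2m}\leq 1\}}$. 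Hence the phase analysis and partial desingularization of Sections 3--5, together with the estimates of the present section, carry over verbatim with $\sigma_\mu$ replaced by the rescaled left symbol of $\E_\lambda^2$, yielding the same leading coefficient and the same remainder $O(\lambda^{(n-\kappa-1/4)/2m})$ for $\tr (P_\chi\E_\lambda)^2$.
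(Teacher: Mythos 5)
Your proposal is correct and follows essentially the same route as the paper: collect \eqref{eq:36a}, \eqref{eq:37c}, \eqref{eq:37d} and Lemma \ref{lemma:10}, observe that the three error exponents $1-2\delta$, $2\delta-1/2$, $\delta/(1+\delta)$ have $\max_{\delta\in(1/4,1/2)}\min(\cdot)=1/4$ attained at $\delta=3/8$, and rescale. Your treatment of the lower bound in \eqref{eq:37d} (which the paper states only as an upper bound) and your remark on $(P_\chi\E_\lambda)^2=P_\chi\E_\lambda^2$ with the composition calculus merely make explicit what the paper leaves implicit.
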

 \begin{proof}
 Since $\tr P_\chi \E_\lambda= d_\chi \lambda^{n/2m} I(\lambda^{-1/2m})+O(1)$, the assertion follows with Theorem \ref{asymp} and Proposition \ref{computhess}, together with Equations \eqref{eq:37c}, \eqref{eq:37d}, and Lemma \ref{lemma:10}, by taking into account that
\bqn
\max_{\delta \in (1/4,1/2)} \min \big(\frac \delta{1+\delta}, 1-2\delta, 2\delta -\frac 1 2\big )=\frac 1 4.
\eqn  Finally, if  in all the previous computations  $\E_\lambda$ is replaced by $\E_\lambda^2$, we obtain a similar estimate for the trace of $P_\chi \E_\lambda\cdot P_\chi \E_\lambda=P_\chi \E_\lambda^2$.
 \end{proof}

\section{Proof of the main result}

As a consequence of Lemma 11 of Part I, and Proposition \ref{prop:5}, we get the following
\begin{theorem}
\label{thm:2}
Let $N^{\E_\lambda}_\chi$ be the number of eigenvalues of $\E_\lambda$ which are $\geq 1/2$ and whose eigenfunctions are contained in the $\chi$-isotypic component $\H_\chi$ of $\L^2(\R^n)$, and assume that Assumptions \ref{assump1} and \ref{assump2} are satisfied. Then
\bqn
 \Big | N^{\E_\lambda}_\chi -   \frac {d_\chi [\rho_{\chi|H_0}:1]}{(2\pi)^{n-\kappa}}  \vol([a_{2m}^{-1}((-\infty, 1])\cap \Omega_{0}]/G)  \,    \lambda^{(n-\kappa)/2m} \Big | = O(\lambda^{(n-\kappa-1/4)/2m}),
\eqn
as $\lambda \to +\infty$.
\qed
\end{theorem}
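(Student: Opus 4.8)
The plan is to deduce the statement directly from the two-sided variational bound established in Part I (Lemmata~11 and 12),
\[
2 \tr (P_\chi \E_\lambda \cdot P_\chi \E_\lambda) -\tr P_\chi \E_\lambda -c_1 \leq N^{\E_\lambda}_\chi \leq 3 \tr P_\chi \E_\lambda -2 \tr (P_\chi \E_\lambda\cdot P_\chi \E_\lambda) +c_2,
\]
together with the trace asymptotics of Proposition~\ref{prop:5}. Abbreviate the conjectured leading term by
\[
W_\chi(\lambda)=\frac {d_\chi [\rho_{\chi|H_0}:1]}{(2\pi)^{n-\kappa}}  \vol([a_{2m}^{-1}((-\infty, 1])\cap \Omega_{0}]/G)  \,    \lambda^{(n-\kappa)/2m}.
\]
Proposition~\ref{prop:5} asserts that $\tr P_\chi \E_\lambda = W_\chi(\lambda)+O(\lambda^{(n-\kappa-1/4)/2m})$ and, by its last sentence, that $\tr(P_\chi\E_\lambda\cdot P_\chi\E_\lambda)=\tr(P_\chi\E_\lambda)^2$ obeys the same asymptotics as $\lambda\to+\infty$ (note $P_\chi$ is a projection commuting with the self-adjoint $\E_\lambda$, so this square equals $P_\chi\E_\lambda^2$).

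First I would insert both expansions into the upper estimate; the coefficients of $W_\chi(\lambda)$ combine as $3-2=1$, giving $N^{\E_\lambda}_\chi \leq W_\chi(\lambda)+O(\lambda^{(n-\kappa-1/4)/2m})$. Inserting them into the lower estimate, the coefficients combine as $2-1=1$, giving $N^{\E_\lambda}_\chi \geq W_\chi(\lambda)+O(\lambda^{(n-\kappa-1/4)/2m})$. Combining the two inequalities yields the assertion. The exact cancellation of the $W_\chi(\lambda)$-terms on both sides is precisely what the cubic expression $\E_\lambda=\tilde\E_\lambda^2(3-2\tilde\E_\lambda)$ was designed to produce, and is the crux of the method of approximate spectral projections: one trades the hard problem of controlling the spectral projector itself for the manageable one of controlling the trace of $\E_\lambda$ and its square.

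The only bookkeeping point is that the additive constants $c_1,c_2$ and the $O(1)$ errors in Proposition~\ref{prop:5} must be absorbed into the stated remainder; since any orbit of $G\subset\O(n)$ in $\rn$ has dimension $\kappa\leq n-1$, one has $(n-\kappa-1/4)/2m\geq 3/(8m)>0$, hence $\lambda^{(n-\kappa-1/4)/2m}\to+\infty$ and $O(1)=O(\lambda^{(n-\kappa-1/4)/2m})$. I expect no genuine obstacle at this stage: all of the hard analysis — the partial desingularization of the critical set $\mathcal C$, the stationary phase treatment of $I(\mu)$ via Proposition~\ref{phaseanalysis} and Theorem~\ref{asymp}, and the identification of the leading coefficient $L_0(\mu)$ as a symplectic volume in Proposition~\ref{computhess} — has already been carried out in Sections~4 and 5 and packaged into Proposition~\ref{prop:5}; the present theorem is a formal consequence of it and of the variational machinery imported from Part~I.
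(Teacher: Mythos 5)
Your proof is correct and is precisely the paper's argument: the authors simply invoke Lemma 11 of Part I together with Proposition \ref{prop:5} and state the theorem with \qed, since the $W_\chi(\lambda)$ terms combine as $3-2=1$ on the upper side and $2-1=1$ on the lower side, leaving only the $O(\lambda^{(n-\kappa-1/4)/2m})$ remainder. Your additional bookkeeping (the identification $P_\chi\E_\lambda\cdot P_\chi\E_\lambda=P_\chi\E_\lambda^2$ and the observation $\kappa\leq n-1$ guaranteeing that $O(1)$ is absorbed) is accurate and matches what the paper implicitly uses.
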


Similar estimates for  the traces of $\tilde \F_\lambda$ and $\F_\lambda$ can be derived as well, and using Lemma 12 of Part I we obtain
\begin{theorem}
\label{thm:2a}
Let $M^{\F_\lambda}_\chi$ be the number of eigenvalues of $\F_\lambda$ which are $\geq 1/2$ and whose eigenfunctions are contained in the $\chi$-isotypic component $\H_\chi$ of $\L^2(\R^n)$. Then under Assumptions \ref{assump1} and \ref{assump2} one has
\bqn
 \Big | M^{\F_\lambda}_\chi -   \frac {d_\chi [\rho_{\chi|H_0}:1]}{(2\pi)^{n-\kappa}}  \vol([a_{2m}^{-1}((-\infty, 1])\cap \Omega_{0}]/G)  \,    \lambda^{(n-\kappa)/2m} \Big | = O(\lambda^{(n-\kappa-1/4)/2m}),
\eqn
as $\lambda \to +\infty$.
\end{theorem}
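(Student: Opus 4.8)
The plan is to establish the analogue of Proposition \ref{prop:5} for $\F_\lambda$, namely that $\tr P_\chi\F_\lambda$ and $\tr(P_\chi\F_\lambda\cdot P_\chi\F_\lambda)$ both have the asymptotic behaviour
\[
\tr P_\chi\F_\lambda=\frac{d_\chi[\rho_{\chi|H_0}:1]}{(2\pi)^{n-\kappa}}\vol([a_{2m}^{-1}((-\infty,1])\cap\Omega_0]/G)\,\lambda^{(n-\kappa)/2m}+O(\lambda^{(n-\kappa-1/4)/2m}),
\]
and then to insert this into the variational bounds of Lemma 12 of Part I, exactly as Theorem \ref{thm:2} was derived from Proposition \ref{prop:5} and Lemma 11 of Part I.

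For the trace asymptotics I would follow Sections 3--5 line by line. The left symbol $\sigma^l(\F_\lambda)$ is $G$-invariant, lies in $S(h^{-2\delta}g,1)$ uniformly in $\lambda$, and, being a polynomial in $\tilde\F_\lambda=\Op^w(\eta_2^2\chi^+_\lambda)$, admits a decomposition of the form \eqref{eq:23b} with $\eta_2^2\chi^+_\lambda$ in place of $\eta_{\lambda,-2}^2\chi_\lambda$ and with remainders of the same symbol classes $S(h^{-2\delta}g,h^{1-2\delta})$ and $S(h^{-2\delta}g,h^{N(1-2\delta)})$, the $O(h^{1-2\delta})$-part again supported in $\mathcal{RV}_{c,\lambda}$. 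Hence $\tr P_\chi\F_\lambda=d_\chi\lambda^{n/2m}I^\F(\lambda^{-1/2m})+O(1)$, where $I^\F(\mu)$ is the oscillatory integral \eqref{eq:15b} with amplitude $\sigma^\F_\mu=[(\eta_2^2\chi^+_\lambda)^2(3-2\eta_2^2\chi^+_\lambda)+f_\lambda]\circ\Psi_\mu^{-1}$. The support statements of Lemma \ref{lemma:11} carry over verbatim: the inequalities defining $\chi^+_\lambda$ differ from those defining $\chi_\lambda$ only by the sign of the lower-order corrections $4h^{\delta-\omega}+8C_0 d$, so for a suitably enlarged $c$ one still has $\supp(\eta_2^2\chi^+_\lambda)^2(3-2\eta_2^2\chi^+_\lambda)\subset A_{c,\lambda}$, this function equals $1$ on $W_\lambda\cap\complement_{{\bf X}\times\R^n}\mathcal{RV}_{c,\lambda}$, and $\supp\sigma^\F_\mu$ lies in a fixed compact set as in \eqref{eq:suppsigma}.

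Since the critical set $\mathcal C$, the phase function $\phr$, and their partial desingularization do not depend on the amplitude, Theorem \ref{asymp} applies with $\sigma_\mu$ replaced by $\sigma^\F_\mu$ --- its proof only used the $\mu$-uniform compactness of the support and the bounds on $\gd^\beta_x\gd^\alpha_{\eta'}\tilde\sigma_{\nu\eta_n}$, which are identical for $\sigma^\F_\mu$ --- and likewise Proposition \ref{computhess} identifies the leading coefficient as $\frac1{(2\pi)^n}[\rho_{\chi|H_0}:1]\int_{\Reg\Omega_0}\sigma^\F_\mu(z)\,d\sigma_{\Reg\Omega_0}(z)/\vol\Ocal_z$. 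To evaluate this integral I would argue as in \eqref{eq:37c}--\eqref{eq:37d}: as $\mu\to0^+$ one has $\sigma^\F_\mu\to\1_{\{a_{2m}\le1\}}$ on ${\bf X}\times\R^n$ (the boundary layer $D_c$ and the regions of small $|\xi|$ shrink), so Lemma \ref{lemma:10} applies unchanged with the corrector sets $\widetilde{\mathcal{RV}}^{(1)}_{c,\mu},\widetilde{\mathcal{RV}}^{(2)}_{c,\mu}$ defined from $\eta_2^2\chi^+_\lambda$, and the computation \eqref{eq:37d} produces the same volume $\vol([a_{2m}^{-1}((-\infty,1])\cap\Omega_0]/G)$ with remainder $O(\mu^{1/4})$ after maximizing over $\delta\in(1/4,1/2)$. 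Replacing $\F_\lambda$ by $\F_\lambda^2$ throughout yields the same estimate for $\tr(P_\chi\F_\lambda\cdot P_\chi\F_\lambda)$, completing the $\F$-analogue of Proposition \ref{prop:5}.

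Finally, abbreviating the claimed leading term by $T(\lambda)$, Lemma 12 of Part I gives $2\tr(P_\chi\F_\lambda\cdot P_\chi\F_\lambda)-\tr P_\chi\F_\lambda-c_1\le M^{\F_\lambda}_\chi\le 3\tr P_\chi\F_\lambda-2\tr(P_\chi\F_\lambda\cdot P_\chi\F_\lambda)+c_2$; since both traces equal $T(\lambda)+O(\lambda^{(n-\kappa-1/4)/2m})$, the left- and right-hand sides both collapse to $T(\lambda)+O(\lambda^{(n-\kappa-1/4)/2m})$, which is the assertion. The only point that is not purely mechanical is checking that the ``$+$''-variant symbols $\chi^+_\lambda$, $\eta_2$ obey the same support and derivative estimates used in Sections 4 and 5; but these differ from the estimates for $\chi_\lambda$, $\eta_{\lambda,-2}$ only through signs of lower-order terms absorbed into the constants, which is exactly the sense in which the considerations for $\F_\lambda$ are ``completely analogous'' to those for $\E_\lambda$ --- so no new difficulty arises, and this bookkeeping is the main (modest) obstacle.
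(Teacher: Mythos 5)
Your proposal is correct and follows the same route as the paper: decompose $\sigma^l(\F_\lambda)$ in analogy with \eqref{eq:23b}, check that the support statements carry over (with $\eta_2$ in place of $\eta_{\lambda,-2}$ and the sign change in $\chi^+_\lambda$), feed the amplitude through Theorem \ref{asymp}, Proposition \ref{computhess}, Lemma \ref{lemma:10} and \eqref{eq:37d}, and then apply the variational bounds of Lemma 12 of Part~I. The paper itself only sketches this by stating the analogous decomposition and asserting the analogy, so your write-up is in fact a more explicit version of the paper's own argument.
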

\begin{proof}
The proof   is similar to the one of Theorem \ref{thm:2}; in analogy to Equation \eqref{eq:23b} one has
\bqn
\sigma^l(\F_\lambda) = (\eta_{2}^2 \chi^+_\lambda)^2 ( 3- 2 \eta_{2}^2\chi^+_\lambda) + f_\lambda + r_\lambda,
\eqn
where $r_\lambda \in S^{-\infty}(h^{-2\delta}g,1)$, and $ f_\lambda \in S(h^{-2\delta} g, h^{1 - 2\delta})$,  everything uniformly in $\lambda$. Again we have $\supp f_\lambda \subset \mathcal{RV}_{c,\lambda}$ for sufficiently large $c$, and  $\int \int |r_\lambda(x,\xi)| \, dx\, \dbar \xi \leq C$ for some constant $C>0$ independent of $\lambda$, so that in order  to study the asymptotic behavior of $\tr P_\chi \F_\lambda$, we can  restrict ourselves to the integral
 \bqn
\int_G \int \int  \overline{\chi(k)} e^{i(x -kx) \xi} ((\eta_{2}^2 \chi^+_\lambda )^2( 3- 2 \eta_{2}^2\chi^+_\lambda)+f_\lambda) (x ,\xi) dx \, \dbar \xi \, dk .
 \eqn
 An application of  the method of the stationary phase then yields the desired result.
\end{proof}
We are now in position to prove our main result. In the case  $G=\{\1\}$, one has $\Omega_0=\R^{2n}$, and we simply obtain Theorem 13.1 of \cite{levendorskii}.
\begin{theorem}
\label{thm:1}
Let $G$ be a compact  group of isometries in Euclidean space $\R^n$, $H_0$ a principal isotropy group, and ${\bf{X}} \subset \R^n$ a bounded open set invariant under $G$. Assume that
\begin{enumerate}
\item[(i)]  for sufficiently small $\rho >0$, $\vol ( \gd {\bf{X}} )_\rho \leq c \rho$, where $c>0$ is a constant independent of $\rho$, and $0\not\in \gd {\bf{X}}$;
\item[(ii)]  the set $\Sing \rn=\rn\setminus \rn_{(H_0)}$ is included in a strict vector subspace $F$ of $\rn$ of dimension $r<n$.
\end{enumerate}
Let  further $A_0$ be a  symmetric, classical pseudodifferential operator in $\L^2(\R^n)$ of order $2m$ with principal symbol $a_{2m}$ that commutes with the regular representation $T$ of $G$, and assume that $A_0$ satisfies the ellipticity condition \eqref{ellip1}. Consider further  the Friedrichs extension of the operator
\bqn
\mathrm{res} \circ A_0 \circ \mathrm{ext}: \CT({\bf{X}}) \longrightarrow \L^2({\bf{X}}),
\eqn
and denote it by $A$. Then $A$ has discrete spectrum. Furthermore, if  $N_\chi(\lambda)$ denotes the number of  eigenvalues of $A$ less or equal $\lambda$ and  with eigenfunctions in the  $\chi$-isotypic component $\mathrm{res}\, \H_\chi$ of $\L^2({\bf{X}})$, and $\kappa =\dim H_0$, then
\begin{equation*}
N_\chi(\lambda)=\frac {d_\chi [\rho_{\chi|H_0}:1]}{(2\pi)^{n-\kappa}}  \vol([ a^{-1}_{2m}( (-\infty,1])\cap \Omega_{0}]/G) \,    \lambda^{(n-\kappa)/2m} +O(\lambda^{(n-\kappa-1/4)/2m}),
\end{equation*}
 where $d_\chi$ denotes the dimension of  any unitary irreducible representation $\rho_\chi$ determined by the character $\chi$, and    $[\rho_{\chi|H_0}:1]$ is the multiplicity of the trivial representation in the restriction of $\rho_\chi$ to  $H_0$.  
\end{theorem}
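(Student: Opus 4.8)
The plan is to obtain Theorem \ref{thm:1} directly from the two-sided estimate for the reduced spectral counting function, combined with the asymptotics for $N^{\E_\lambda}_\chi$ and $M^{\F_\lambda}_\chi$ already established in Theorems \ref{thm:2} and \ref{thm:2a}. First I would observe that, by Proposition \ref{cpctres}, the Friedrichs extension $A$ has compact resolvent and hence discrete spectrum; since $A$ commutes with the representation $\mathrm{T}$, the reduced operator $A_\chi$ inherits this property, so that $N_\chi(\lambda)$ is finite for every $\lambda\in\R$. Moreover, as recalled in Section 2, the Friedrichs extension of $\mathrm{res}\circ A_0\circ\mathrm{ext}\colon \CT({\bf{X}})\cap\H_\chi\to\mathrm{res}\,\H_\chi$ is precisely $A_\chi$, whence, by \cite{levendorskii}, Lemma A.2,
\bqn
N_\chi(\lambda)=\Ncal(A_0-\lambda\1,\,\H_\chi\cap\CT({\bf{X}})).
\eqn

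Next I would invoke the variational bounds from Part I, Theorems 5 and 6, which furnish a constant $C>0$, independent of $\lambda$, such that
\bqn
N^{\E_\lambda}_\chi-C\leq\Ncal(A_0-\lambda\1,\,\H_\chi\cap\CT({\bf{X}}))\leq M^{\F_\lambda}_\chi+C.
\eqn
Under Assumptions \ref{assump1} and \ref{assump2}, Theorem \ref{thm:2} gives the asymptotics of $N^{\E_\lambda}_\chi$ and Theorem \ref{thm:2a} those of $M^{\F_\lambda}_\chi$, both being equal to
\bqn
\frac{d_\chi[\rho_{\chi|H_0}:1]}{(2\pi)^{n-\kappa}}\,\vol([a_{2m}^{-1}((-\infty,1])\cap\Omega_0]/G)\,\lambda^{(n-\kappa)/2m}+O(\lambda^{(n-\kappa-1/4)/2m})
\eqn
as $\lambda\to+\infty$. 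Since an orbit of $G\subset\O(n)$ in $\rn$ has dimension at most $n-1$, one has $\kappa\leq n-1$, so that $(n-\kappa-1/4)/2m>0$ and the additive constant $C$ is absorbed into the remainder term. Squeezing then yields the stated asymptotic formula for $N_\chi(\lambda)$.

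Finally I would record two minor points. The multiplicity $[\rho_{\chi|H_0}:1]$ is independent of the choice of the principal isotropy group $H_0$ within its conjugacy class, since restriction of $\rho_\chi$ to conjugate subgroups yields equivalent representations, so the formula is well defined as stated; and in the trivial case $G=\{\1\}$ one has $\Omega_0=\R^{2n}$, $\kappa=0$ and $d_\chi=[\rho_{\chi|H_0}:1]=1$, so that Theorem \ref{thm:1} reduces to Theorem 13.1 of \cite{levendorskii}. I expect no genuine obstacle at this stage: all the analytic work — the phase analysis and partial desingularization of Section 4, the computation of the leading coefficient in Section 5, and the trace estimates of Proposition \ref{prop:5} — has already been carried out, and the only verification needed here is that the remainder exponent dominates the $O(1)$ terms, which is guaranteed by the elementary bound $\kappa\leq n-1$.
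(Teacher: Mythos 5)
Your proposal matches the paper's own proof essentially step for step: discreteness via Proposition \ref{cpctres}, the identification $N_\chi(\lambda)=\Ncal(A_0-\lambda\1,\H_\chi\cap\CT({\bf{X}}))$, the two-sided variational bound from Part I (Theorems 5 and 6), and the squeeze via Theorems \ref{thm:2} and \ref{thm:2a}. The added observations — that the constant is absorbed into the remainder because $\kappa\leq n-1$, and that $[\rho_{\chi|H_0}:1]$ is well defined up to conjugacy — are correct but only make explicit what the paper leaves implicit.
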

\begin{proof}
The disreteness of the spectrum was already shown in Proposition \ref{cpctres}. Now, by  Theorems 5 and 6 of Part I, there exist constants $C_i>0$ independent of $\lambda$ such that
$$N^{\E_\lambda}_\chi -C_1 \leq  \mathcal{N}(A_0-\lambda \1, \H_\chi\cap \CT({\bf{X}})) \leq M_\chi^{\F_\lambda} +C_2.$$
 Theorems \ref{thm:2} and \ref{thm:2a} then yield the estimate
\bqn
 \Big | N_\chi(\lambda)  -   \frac {d_\chi [\rho_{\chi|H_0}:1]}{(2\pi)^{n-\kappa}}  \vol([a_{2m}^{-1}((-\infty, 1])\cap \Omega_{0}]/G)  \,    \lambda^{(n-\kappa)/2m} \Big | = O(\lambda^{(n-\kappa-1/4)/2m}).
\eqn
The proof of the theorem is now complete.
\end{proof}


\providecommand{\bysame}{\leavevmode\hbox to3em{\hrulefill}\thinspace}
\providecommand{\MR}{\relax\ifhmode\unskip\space\fi MR }
\providecommand{\MRhref}[2]{%
  \href{http://www.ams.org/mathscinet-getitem?mr=#1}{#2}
}
\providecommand{\href}[2]{#2}



\end{document}